\documentclass[numbers,webpdf]{ima-authoring-template}%

\graphicspath{{Fig/}}

\usepackage{float}
\theoremstyle{thmstyletwo}%
\newtheorem{theorem}{Theorem}%  meant for continuous numbers
\newtheorem{proposition}[theorem]{Proposition}%

\newcommand{\roma}{\mathrm{I}}
\newcommand{\romb}{\mathrm{II}}

\newcommand{\dx}{\,\mathrm{d}x}
\newcommand{\ds}{\,\mathrm{d}s}
\newcommand{\prnt}[1]{\left( #1 \right)}

\newcommand{\norm}[1]{\left\|#1\right\|}

\newcommand{\normL}[2]{\norm{#1}_{L^2\prnt{#2}}}

\newcommand{\normHp}[3]{\norm{#1}_{H^{#3}\prnt{#2}}}

\newcommand{\Cov}{C_{\mathrm{ov}}}
\newcommand{\Cw}{{\rm C}_{\mathrm{weak}}}
\newcommand{\Ce}{{\rm C}_{\mathrm{est}}}
\newcommand{\Const}[1]{{\rm C}_{\mathrm{#1}}}

\newcommand{\Co}{{\rm C}_{\mathrm{ap}}(k)}
\newcommand{\Cpoin}[1]{{\rm C}_{\operatorname{P}}(#1)}
\newcommand{\Cpoinn}[2]{{\rm C}^{#2}_{\operatorname{P}}(#1)}

\newcommand{\RR}{\mathcal{R}}
\newcommand{\op}{\operatorname}
\newtheorem{corollary}{Corollary}
\newtheorem{assumption}{Assumption}
\newtheorem{lemma}{Lemma}
\usepackage{soul,ulem}
\setstcolor{red}
\allowdisplaybreaks

\usepackage{comment}
\usepackage{mathtools}
\usepackage{booktabs} 
\usepackage{multirow} 
\usepackage{adjustbox}
\usepackage{subfigure}
\usepackage{algorithm}
\usepackage{algpseudocode}
\usepackage{tikz}
\usetikzlibrary{arrows.meta,calc}

\newtheorem{remark}{Remark}%

\numberwithin{equation}{section}

\begin{document}

\DOI{DOI HERE}
\copyrightyear{2021}
\vol{00}
\pubyear{2021}
\access{Advance Access Publication Date: Day Month Year}
\appnotes{Paper}
\copyrightstatement{Published by Oxford University Press on behalf of the Institute of Mathematics and its Applications. All rights reserved.}
\firstpage{1}

%\subtitle{Subject Section}

\title[EMs-HS for High-frequency Helmholtz problems]{Discrete Edge Multiscale Space based Hybrid Schwarz Preconditioner for High-Frequency Helmholtz Problems}

\author{Shubin Fu
\address{\orgdiv{Eastern Institute for Advanced Study},
\orgname{Eastern Institute of Technology},
\orgaddress{\state{Ningbo}, \postcode{315200}, \state{Zhejiang}, \country{P. R. China}}}
}

\author{Shihua Gong
\address{\orgdiv{School of Science and Engineering},
\orgname{The Chinese University of Hong Kong, Shenzhen},
\orgaddress{\state{Shenzhen}, \postcode{518172}, \state{Guangdong}, \country{P. R. China}}}
}

\author{Guanglian Li*
\address{\orgdiv{Department of Mathematics},
\orgname{The University of Hong Kong},
\orgaddress{\street{Pok Fu Lam Road}, \country{Hong Kong SAR, P. R. China}}}
}

\author{Yueqi Wang
\address{\orgdiv{Department of Mathematics},
\orgname{Purdue University},
\orgaddress{\street{610 Purdue Mall}, \state{West Lafayette}, \postcode{47907}, \state{IN}, \country{USA}}}
}

\authormark{Fu et al.}

\corresp[*]{Corresponding author: \href{email:email-id.com}{lotusli@math.hku.hk}}

\received{Date}{0}{Year}
\revised{Date}{0}{Year}
\accepted{Date}{0}{Year}

%\editor{Associate Editor: Name}

\abstract{In this work, we develop a novel hybrid Schwarz method, termed as edge multiscale space based hybrid Schwarz (EMs-HS), for solving the Helmholtz problem with large wavenumbers. The problem is discretized using $H^1$-conforming nodal finite element methods on meshes of size $h$ decreasing faster than $k^{-3/2}$ such that the discretization error can be bounded by the data uniformly with respect to $k$. 
EMs-HS consists of a one-level Schwarz preconditioner (RAS-imp) and a coarse solver in a multiplicative way. The RAS-imp preconditioner solves local problems on overlapping subdomains with impedance boundary conditions in parallel, and combines the local solutions using  partition of unity. On the PDE level, this coarse space corresponds to an edge multiscale space proposed in {[S. Fu, G. Li, R. Craster, and S. Guenneau, \emph{Multiscale Model. Simul.}, 19(4):1684–1709,
2021]}. The approximation properties of this coarse space to the discrete solutions from the primal and adjoint problems are derived by a novel conforming Galerkin finite element method based upon a nonstandard variational formulation. This work represents the first theoretical analysis of an edge multiscale space at the discrete level.
We prove that the EMs-HS preconditioner leads to a convergent fixed-point iteration uniformly for large wavenumbers, by rigorously analyzing the approximation properties of the coarse space to the global Helmholtz harmonic extension part and to the solution of the adjoint problem. We provide extensive numerical results on the sharpness of the theoretical findings and also demonstrate the method on challenging heterogeneous models. }
\keywords{Helmholtz equations; large wavenumber; hybrid Schwarz method; heterogeneous coefficient.}

% \boxedtext{
% \begin{itemize}
% \item Key boxed text here.
% \item Key boxed text here.
% \item Key boxed text here.
% \end{itemize}}

\maketitle

\section{Introduction}

The efficient solution of the Helmholtz problem with large wavenumbers on modern multiprocessor computers is  of huge importance in many practical applications, e.g., acoustics, geophysics and electromagnetics, and has long been considered one of the most challenging problems in scientific computing. Helmholtz problems are numerically challenging because (a) the solutions are often highly oscillatory and very fine discretizations are needed to resolve them, leading to large system matrices; (b) the system matrices are highly indefinite and non-normal, and standard iterative methods are not reliable; (c) the propagative nature of the problem implies that the presence of a source at any point can have considerable effect far away, thus inhibiting the performance of parallel algorithms which rely on localization techniques. Each of these essential difficulties gets worse as the wavenumber $k$ increases. 

In recent years there have been significant developments in the theory of one-level Schwarz methods for high-frequency Helmholtz problems. Since the Galerkin projection of discrete Helmholtz problems into a subdomain might be singular, one-level Schwarz methods usually require stabilization, e.g., shifted-Laplacian technique \cite{erlangga2006novel, erlangga2004class}. One-level methods with symmetrized weighted prolongation and restriction by partition of unity lead to good field of values for preconditioning absorptive Helmholtz problems  \cite{graham2020domain,gong2021domain}. There are also one-level methods using wave-type truncation techniques to construct local problems \cite{kimn2007restricted,engquist2011sweepinga}, e.g. RAS-imp preconditioners \cite{gong2022convergence,gong2023convergence} using absorbing boundary condition  and the RAS-PML preconditioners \cite{galkowski2024convergence} using perfectly matched layers, in which the local solutions are combined using partition of unity. With stable local problems, these one-level methods are power contractive. Given a fixed setting of domain decomposition with generous overlap, they can be $k$-robust even without a coarse space, but the drawback is that the subdomain problems are fairly costly and the algorithms are not scalable.

To obtain a scalable domain decomposition algorithm (i.e., weakly dependent on the number of subdomains), one usually needs a two-level method by introducing a coarse space that can capture the global information and effectively transfer it among subdomains. Two-level methods utilize a standard algorithmic strategy for scalable solutions of large systems of equations and have demonstrated great success in steady-state diffusion, elasticity and fluid problems. The classical Schwarz theory \cite{toselli2006domain} can predict the performance of two-level methods for these self-adjoint problems. One basic tool is the fictitious space lemma that requires constructing a stable decomposition for each function in the discrete solution space. Whereas the coarse function space provides a coarse approximation, the local function spaces resolve the high-frequency remainder, which should be relatively small in the $L^2$-norm compared to its $H^1$-norm. The classical Schwarz theory \cite{cai1992domain,xu1992new} can also  predict the performance of certain two-level domain decomposition algorithms for solving non-symmetric and indefinite problems when the indefinite part of the operator is a small perturbation of the elliptic part. Unfortunately, for the high-frequency Helmholtz equation, the indefinite part is amplified by the square of the wavenumber $k^2$ and the classical theory gives very pessimistic estimates  \cite{graham2017domain}.

Thus, it is natural to ask whether the theory of \cite{graham2020domain,gong2021domain,gong2022convergence,gong2023convergence,galkowski2024convergence} can be improved if a coarse space were added. For example, can the addition of a coarse space speed up the information exchange among subdomains? Or can the addition of a coarse space reduce the requirement of the overlapping size in the domain decompositions? For the conventional coarse spaces (i.e., piecewise polynomials on a coarse grid), the theory of coarse approximation for Helmholtz is inextricably linked to the theory of finite element approximation of the Helmholtz problem itself; see Remark \ref{rm:mesh-resolution} for more discussions. In particular, if the diameter of the coarse mesh is not small enough, there is no guarantee that a coarse projection exists at all, let alone any accuracy. The most well-known coarse spaces for Helmholtz problems with heterogeneous coefficients are DtN coarse space, GenEO coarse space, Localized Orthogonal Decomposition (LOD) and multiscale spectral generalized finite element method (MS-GFEM) \cite{MR4299039,MR4621835,MR4890903,Graham-geneo-2024,MR4980418, MR4992072,MR5013301}. Hence it is crucial to construct a coarse space that scales weakly with respect to the wavenumber, while ensuring a stable coarse projection. In the context of the Helmholtz problem with large wavenumbers, ideally the coarse space should overcome the so-called pollution effect with very mild resolution condition over the coarse grid, while maintain coarse grid accuracy.
 
In this work, we propose a coarse space $V_{h,0}$ using the discrete edge multiscale (EMs) spaces \cite{MR3939320,MR3980476,MR4860976}. EMs spaces are one class of multiscale ansatz spaces \cite{MR2718268,MR2728702,MR3834684,MR3614010,MR4860976,MR4601686}, and were originally developed for approximating the solutions of elliptic PDEs with strongly oscillating coefficients. EMs spaces arise from the global splitting of the solution by a global bubble part and a global Helmholtz harmonic extension part, realized by a local splitting in each subdomain. Since the global bubble part can be recovered locally in parallel, the EMs space is constructed to approximate the global Helmholtz harmonic extension part by solving a homogeneous Helmholtz equation with hierarchical bases up to level $\ell$ as the Dirichlet boundary data in each subdomain and then use partition of unity functions to glue the local bases together as global multiscale bases. The dimension of $V_{h,0}$ is $H^{-d}2^{\ell(d-1)}$ with $\ell\leq \log_2\frac{H}{h}$, and EMs-HS is compatible with 
any sequence of overlapping subdomains that decomposes $\Omega$ and its associated partition of unity.
Inspired by \cite{MR4890903}, we establish in Theorem \ref{prop:wavelet-basedconv} the convergence of the proposed  EMs-HS preconditioner, i.e., the contraction property of the coarse correction for the global harmonic extension part, using the standard Schatz argument under very mild conditions on overlapping size $\delta$, the maximum diameter of the subdomains $\bar{d}$ and the level parameter $\ell$ (see Theorem \ref{prop:wavelet-basedconv}). 
This is achieved as follows. In Corollary \ref{thm:proj}, we derive the local approximation properties of the coarse space, by transferring the approximation properties over the coarse skeleton to its interior by means of the transposition method \cite{MR0350177} (see also \cite[Appendix A]{MR4860976}) and analyzing the conforming Galerkin finite element approximation error to a nonstandard variational formulation in Section \ref{sec:very-weak}. Note that Section \ref{sec:very-weak} presents a self-contained analysis of Galerkin finite element approximation errors using a nonstandard variational formulation, a framework of independent theoretical interest. Then in Lemmas \ref{thm:last-app-harmonic} and \ref{thm:last}, we derive its approximation properties to the global Helmholtz harmonic extension part and the solution to the adjoint problem by means of representing the global error in terms of local errors.
Theorem \ref{prop:wavelet-basedconv} indicates that in the optimal case with uniform meshes, the proposed iterative solvers only require the dimension of the coarse spaces to be about $\min\{k^{3d-2},k^{-1/2}h^{-d}\}$
in order to achieve convergence for the Helmholtz equation with large wavenumbers. Extensive numerical experiments for both 2-d and 3-d cases with homogeneous coefficients and heterogeneous coefficients demonstrate that $\ell=1$ is sufficient for rapid convergence.

The rest of the paper is structured as follows. We describe the basics on the weak formulation and FEM approximation in Section \ref{sec:prep}. In Section \ref{sec:hybrid-2}, we present  the one-level RAS-imp preconditioner and the hybrid Schwarz preconditioner. Then in Section \ref{sec:coarse-space}, we give the construction of the coarse space, and in Section \ref{sec:converge} prove the convergence of the hybrid Schwarz preconditioner. In Section \ref{sec:num}, we present 2-d and 3-d numerical tests to complement the theoretical findings. Throughout, We employ standard notation for $L^p$ or $L^\infty$ spaces, the Sobolev space $W^{m,p}(D;\mathbb{C})$, and their associated (semi-) norms.
We define the complex-valued space $V:=H^{1}(\Omega;\mathbb{C}):={W^{1,2}(\Omega;\mathbb{C})}$, equipped with the $k$-weighted norm
\[
\norm{v}_{V}:=\sqrt{\|\nabla v\|_{L^2(\Omega)}^2+k^2\|v\|_{L^2(\Omega)}^2},
\]
and similarly define the complex-valued space $V(D):=H^{1}(D;\mathbb{C})$ also equipped with the $k$-weighted norm
$\norm{\cdot}_{1,k,D}$ for any $D\subset \Omega$.
We define $(w,v)_{D}=\int_Dw\overline{v}{\rm d}x$ the inner product on $L^2(D;\mathbb{C})$ and denote $\|\cdot\|_{L^2(D)}$ its induced norm
%denote by $(\cdot,\cdot)_{D}$ and $\|\cdot\|_{L^2(D)}$ the $L^2(D;\mathbb{C})$ inner product and its induced norm 
for any $D\subset\Omega$, $\Re$, $\Im$ and $\overline{\cdot}$ the real part, imaginary part, and conjugate of a complex number. The notation $a\lesssim b$ means that there exists a constant $C>0$, independent of the parameters of interest ($k,h,\bar{d},\delta$), but it may depend on the triangulation and the domain $\Omega$, such that $a\leq Cb$. We also write $a\simeq b$ if $a \lesssim b$ and $b \lesssim a$.

\section{The Helmholtz equation and its finite element approximation}\label{sec:prep}
Let $\Omega\subset \mathbb{R}^d $ ($d=2,3$) be a bounded convex polygonal/polyhedron domain, $f\in (H^1(\Omega;\mathbb{C}))^\prime$ and $g\in H^{-1/2}(\partial\Omega;\mathbb{C})$. Consider the Helmholtz equation subject to the impedance boundary condition
\begin{equation}
    \label{hel_cont}
   \left\{  \begin{split}
    -\Delta u-k^2u&=f,\quad\mbox{in }\Omega,\\
    \frac{\partial u}{\partial n}+\mathrm{i}ku&=g,\quad\mbox{on }\partial \Omega, 
     \end{split}\right.
\end{equation}
where without loss of generality, we assume the wavenumber $k\geq k_0>0$. 

The variational formulation for \eqref{hel_cont} reads: find $u\in V$ such that 
\begin{equation}
\label{var_form}
    a(u,v)=F(v), \quad\forall v\in V,
\end{equation}
where the sesquilinear form $a(\cdot,\cdot): V\times V\to \mathbb{C}$ and the conjugate linear form $F: V\to \mathbb{C}$ are defined by
\begin{align*}
  a(v_1,v_2)&:= \int_\Omega\nabla v_1\cdot\nabla\overline{v}_2\mathrm{d}x-k^2\int_\Omega v_1\overline{v}_2\;\mathrm{d}x+ \mathrm{i}k\int_{\partial \Omega} v_1\overline{ v}_2 \;\mathrm{d}s, \quad\forall v_1,v_2\in V, \\
F(v)&:=\int_\Omega f\overline{v}\;\mathrm{d}x+\int_{\partial \Omega} g\overline{v}\;\mathrm{d}s,\quad\forall v\in V.   
\end{align*}
The following properties of the sesquilinear form $a(\cdot,\cdot)$ hold \cite[Theorem 3.2 and Corollary 3.3]{MR2812565}.
\begin{theorem}[Properties of the sesquilinear form $a(\cdot,\cdot): V\times V\to \mathbb{C}$]\label{them:sesquilinear}
The following properties hold,
\begin{itemize}
\item[1.] The sesquilinear form $a(\cdot,\cdot)$ is bounded: there exists a constant $\Const{bdd}$ independent of wavenumber $k$
such that
    \begin{align}\label{eq:sesqui-bound}
    |a(v_1,v_2)|\leq \Const{bdd}\|v_1\|_{V}\|v_2\|_V, \quad\forall v_1,v_2\in V.
    \end{align}
\item[2.] The G\aa rding's inequality holds
\begin{align}\label{eq:garding}
\Re a(v,v)+2k^2\normL{v}{\Omega}^2\geq \|v\|_V^2,\quad\forall v\in V.
\end{align}
\end{itemize}
\end{theorem}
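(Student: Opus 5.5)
Both properties come from Cauchy--Schwarz on each term of $a(\cdot,\cdot)$; the only real work is controlling the boundary integral by the $k$-weighted norm. I will prove the G\aa rding inequality first, since it is purely algebraic, and then boundedness.

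\emph{G\aa rding inequality.} Taking $v_1=v_2=v$ gives
\[
a(v,v)=\|\nabla v\|_{L^2(\Omega)}^2-k^2\|v\|_{L^2(\Omega)}^2+\mathrm{i}k\|v\|_{L^2(\partial\Omega)}^2 .
\]
The boundary term is purely imaginary, so $\Re a(v,v)=\|\nabla v\|_{L^2(\Omega)}^2-k^2\|v\|_{L^2(\Omega)}^2$. Adding $2k^2\|v\|_{L^2(\Omega)}^2$ yields exactly $\|v\|_V^2$. Thus \eqref{eq:garding} holds, in fact with equality.

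\emph{Boundedness.} Cauchy--Schwarz gives
\[
|a(v_1,v_2)|\le \|\nabla v_1\|\|\nabla v_2\|+k^2\|v_1\|\|v_2\|+k\|v_1\|_{L^2(\partial\Omega)}\|v_2\|_{L^2(\partial\Omega)}.
\]
By the discrete Cauchy--Schwarz inequality, the first two terms are at most $\|v_1\|_V\|v_2\|_V$.

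For the boundary term I use the multiplicative trace inequality on the Lipschitz domain $\Omega$:
\[
\|v\|_{L^2(\partial\Omega)}^2\lesssim \|v\|_{L^2(\Omega)}\|\nabla v\|_{L^2(\Omega)}+\|v\|_{L^2(\Omega)}^2 .
\]
Multiplying by $k$ and using Young's inequality gives $k\|v\|_{L^2(\Omega)}\|\nabla v\|_{L^2(\Omega)}\le \tfrac12\|v\|_V^2$. The lower-order term satisfies $k\|v\|_{L^2(\Omega)}^2=k^{-1}\,k^2\|v\|_{L^2(\Omega)}^2\le k_0^{-1}\|v\|_V^2$, since $k\ge k_0$. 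Hence $k\|v\|_{L^2(\partial\Omega)}^2\lesssim(1+k_0^{-1})\|v\|_V^2$.

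Applying this to $v_1$ and $v_2$ and taking square roots bounds the boundary term by a constant times $\|v_1\|_V\|v_2\|_V$. The resulting $\Const{bdd}$ depends only on $\Omega$ and $k_0$, not on $k$.

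\emph{Main obstacle.} The delicate step is the $k$-uniform trace estimate. The standard trace inequality $\|v\|_{L^2(\partial\Omega)}\lesssim\|v\|_{H^1(\Omega)}$ gives only $k\|v\|_{H^1}^2$, which is not controlled by $\|v\|_V^2$ uniformly in $k$. The multiplicative form, which follows by the divergence theorem applied to $|v|^2\mathbf{x}$ (or to $|v|^2\boldsymbol{\beta}$ for a smooth field $\boldsymbol{\beta}$ with $\boldsymbol{\beta}\cdot n\ge c>0$ on $\partial\Omega$), is exactly what balances the powers of $k$.
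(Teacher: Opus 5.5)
Your proof is correct. The G\aa rding inequality is in fact an identity obtained by taking real parts, and the multiplicative trace inequality combined with $k\ge k_0$ gives $k\|v\|_{L^2(\partial\Omega)}^2\lesssim(1+k_0^{-1})\|v\|_V^2$, which is exactly what makes $\Const{bdd}$ independent of $k$. The paper gives no proof of its own and instead cites Melenk--Sauter \cite{MR2812565}, whose continuity argument is the same standard one you give.
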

The existence and uniqueness of the weak solution of \eqref{var_form} can be found in \cite[Proposition 8.1.3]{melenk1995generalized}. 
\begin{comment}
Furthermore, there exists some constant, $\Co$, that may depend on the wavenumber $k$, such that the unique solution $u\in V$ to Problem \eqref{var_form} fulfills
\begin{align}\label{res:apriori}
\norm{u}_{V}\leq \Co \normL{f}{\Omega}.
\end{align}
\end{comment}
The adjoint problem to \eqref{var_form} is frequently utilized in the analysis of indefinite problems satisfying the G\aa rding's inequality \eqref{eq:garding}, which is defined as given $w\in V'$, find $z\in V$ such that
\begin{align}\label{eqn:weakformDual}
a^*(z, v)=(w,v)_{\Omega}, \quad\forall v\in V,
\end{align}
where $a^*(\cdot,\cdot)$ is defined by (see, e.g., \cite{MR1742312})
$a^*(w, v)=\overline{a(v,w)}$ for all $ w,v\in V$.
To discretize problem \eqref{hel_cont}, we first decompose the domain $\Omega$. Let $\mathcal{T}_h$ be a uniform triangulation of $\Omega$ with a mesh size $h$. 
For simplicity, let $V_h$ be the $H^1$-conforming piecewise linear finite element space
\[
V_h:=\{v\in H^{1}(\Omega): v|_{T}\in \mathcal{P}_{1}(T),\;\forall T\in \mathcal{T}_h\},
\]
where $\mathcal{P}_1(T)$ denotes the space of linear polynomials on the fine element $T\in \mathcal{T}_h$. Let $\Pi_h: V\to V_h$ be the standard nodal interpolation operator. The $H^1$-conforming Galerkin approximation to problem \eqref{var_form} is to find $u_h\in V_h$ such that 
\begin{equation}\label{eq:discrete-global}
    a(u_h,v_h)=F(v_h), \quad\forall v_h\in V_h.
\end{equation}
The generalization to high order method is straightforward. Throughout, we assume the approximation property for the FEM space $V_h$: for all $v\in H^2(\Omega)$, there holds
\begin{align}\label{approxi}
   \inf_{v_h\in V_h} \left(\|v-v_h\|_{L^2(\Omega)}+h\|\nabla (v- v_h)\|_{L^2(\Omega)}\right)\lesssim h^2|v|_{H^2(\Omega)}.
\end{align}
Using property \eqref{approxi}, we can establish the following stability of the discrete problem \cite{MR3232452}. 
\begin{theorem}[Discrete stability]
\label{thm_1}
Let $\Omega$ be a convex polygon/polyhedron domain. Then there exists a constant $C_0$ sufficiently small that is independent of $k$ and $h$, such that if the triangulation mesh satisfies %\ednote{Shihua: Consider to use the update-to-date results on preasymptotic error analysis, such as $h^2k^3\le C_0$. If yes, we need to rewrite the remark. }
\begin{align}\label{eq:fine-mesh-cond}
h^2k^{3}\leq C_0,
\end{align}
%then the following discrete inf-sup condition holds
%\begin{equation*}
 %   \inf_{v_h\in V_h\backslash\{0\}}\sup_{w_h\in V_h\backslash\{0\}}\frac{ |a(v_h,w_h)|}{\|v_h\|_{V}\|w_h\|_{V}}
  %  \geq \frac{1}{\Const{stab}k}
%\end{equation*}
%with the constant $\Const{stab}$ independent on $h$ and $k$. 
then \eqref{eq:discrete-global} is well-posed, and its solution $u_h$ satisfies
\begin{equation*}
    \|u_{h}\|_{V} \leq\Const{stab}  (\|f\|_{L^2(\Omega)}+\|g\|_{L^2(\partial\Omega)}). 
\end{equation*}
Here, the constant $\Const{stab}$ is independent of $h$ and $k$. 
%where $u$ is the exact .
\end{theorem}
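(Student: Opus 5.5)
The plan is the standard preasymptotic argument (Wu, Zhu–Wu), which combines the modified duality (Schatz) argument with elliptic projections, under the resolution condition $h^2k^3\le C_0$. I will treat the continuous problem \eqref{var_form} on the convex domain as given, with the known wavenumber-explicit regularity: for data $f\in L^2(\Omega)$, $g\in L^2(\partial\Omega)$ (or $H^{1/2}$ for $H^2$ bounds), the solution satisfies $\|u\|_V\lesssim \|f\|_{L^2}+\|g\|_{L^2(\partial\Omega)}$ and $|u|_{H^2}\lesssim k(\|f\|+\|g\|)$. The same bounds hold for the adjoint problem \eqref{eqn:weakformDual}, since $a^*$ is the impedance problem with $\mathrm{i}k$ replaced by $-\mathrm{i}k$. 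Because $V_h$ is finite dimensional, well-posedness of \eqref{eq:discrete-global} follows from uniqueness. So it suffices to prove the a priori bound for any discrete solution $u_h$.

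\textbf{Step 1: G\aa rding reduces everything to an $L^2$ bound.} Taking $v_h=u_h$ in \eqref{eq:discrete-global} and using \eqref{eq:garding} gives
\[
\|u_h\|_V^2\le \Re F(u_h)+2k^2\|u_h\|_{L^2}^2 .
\]
Hence it is enough to show $k\|u_h\|_{L^2}\lesssim \|f\|+\|g\|+\varepsilon\|u_h\|_V$ with $\varepsilon$ small. The term $|F(u_h)|$ is bounded via the trace inequality $\|v\|_{L^2(\partial\Omega)}^2\lesssim \|v\|\,\|v\|_{H^1}\lesssim k^{-1}\|v\|_V^2$.

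\textbf{Step 2: duality with an elliptic projection.} Let $z$ solve the adjoint problem with right-hand side $u_h$, so that $\|z\|_V\lesssim\|u_h\|$ and $|z|_{H^2}\lesssim k\|u_h\|$. A plain Schatz argument with $a(u-u_h,z-z_h)$ would require $hk\lesssim 1$ together with $k^2h\lesssim1$, which is too strong. Instead, I introduce the elliptic projection $P_h$ for the shifted form $a_+(v,w)=(\nabla v,\nabla w)+\mathrm{i}k\langle v,w\rangle_{\partial\Omega}$, which is coercive-like in the $k$-weighted setting. Writing $\|u_h\|^2=a(u_h,z)$, I split
\[
a(u_h,z)=a(u_h,P_hz)+a(u_h,z-P_hz)=F(P_hz)-k^2(u_h,z-P_hz)+\dots,
\]
using $a_+(u_h,z-P_hz)=0$. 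The leftover term is bounded by $k^2\|u_h\|\,\|z-P_hz\|$. Standard duality for the elliptic projection gives $\|z-P_hz\|\lesssim h^2|z|_{H^2}\lesssim h^2k\|u_h\|$. This yields a factor $k^3h^2\|u_h\|^2\le C_0\|u_h\|^2$, which is absorbed for $C_0$ small. The remaining term $F(P_hz)$ is bounded by $(\|f\|+\|g\|)\,(\|P_hz\|+\|P_hz\|_{L^2(\partial\Omega)})$, with $\|P_hz\|_V\lesssim\|z\|_V\lesssim\|u_h\|$. This gives $\|u_h\|\lesssim k^{-1}$-scaled bounds up to the data.

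\textbf{Main obstacle.} The key difficulty is getting the elliptic projection estimates with the correct $k$-dependence, especially the boundary term $\mathrm{i}k\langle\cdot,\cdot\rangle$ in $a_+$. One needs $\|z-P_hz\|_{L^2}\lesssim h^2|z|_{H^2}$ uniformly in $k$ under $hk\lesssim1$. This requires an Aubin–Nitsche argument for the Robin-type problem with $k$-uniform $H^2$ regularity on the convex polygon. The regularity of the shifted problem is elliptic with a complex Robin coefficient, so its $H^2$ bound may carry powers of $k$. If that loses powers of $k$, I would fall back on the pure Neumann-type projection $(\nabla\cdot,\nabla\cdot)+k^2(\cdot,\cdot)$ and treat the boundary term as a perturbation controlled by the trace inequality and $hk\le C_0^{1/2}k^{-1/2}$ small. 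Finally, I would insert the $L^2$ bound into Step 1 and absorb terms to obtain $\|u_h\|_V\le \Const{stab}(\|f\|+\|g\|)$.
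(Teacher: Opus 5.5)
The paper does not prove this theorem; it imports it from Wu's preasymptotic analysis \cite{MR3232452}. Your outline (G\aa rding plus a modified duality argument with an elliptic projection) is the right family of arguments. As written, though, there are two genuine gaps.

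\textbf{First, the step you flag as the main obstacle is a real gap.} For $a_+(v,w)=(\nabla v,\nabla w)+\mathrm{i}k\langle v,w\rangle_{\partial\Omega}$ the Aubin--Nitsche dual problem is $-\Delta\psi=r$, $\partial_n\psi\pm\mathrm{i}k\psi=0$. (Note also that $a_+$ is not coercive in $\|\cdot\|_V$, since it does not see $k^2\|v\|^2_{L^2(\Omega)}$.) Energy only gives $\|\psi\|_{H^1(\Omega)}+k^{1/2}\|\psi\|_{L^2(\partial\Omega)}\lesssim\|r\|_{L^2(\Omega)}$, so the available Neumann-data bound is $k\|\psi\|_{H^{1/2}(\partial\Omega)}\lesssim k\|r\|_{L^2(\Omega)}$. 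The routine argument then yields only $\|z-P_hz\|_{L^2(\Omega)}\lesssim kh^2|z|_{H^2(\Omega)}$, and a remainder $k^4h^2\|u_h\|^2_{L^2(\Omega)}$ that \eqref{eq:fine-mesh-cond} does not control. A $k$-uniform $H^2$ bound for this Robin--Laplace problem on a polygon would be a separate result, which you do not supply.

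Your fallback, the projection for $(\nabla\cdot,\nabla\cdot)+k^2(\cdot,\cdot)$, leaves the remainder $\mathrm{i}k\langle u_h,z-P_hz\rangle_{\partial\Omega}$, with $\|z-P_hz\|_{L^2(\partial\Omega)}\lesssim h^{3/2}|z|_{H^2(\Omega)}\lesssim h^{3/2}k\|u_h\|_{L^2(\Omega)}$. If $\|u_h\|_{L^2(\partial\Omega)}$ is handled by the trace inequality and Step 1, i.e. by $k^{1/2}\|u_h\|_{L^2(\Omega)}$ plus data, this remainder is $k^{5/2}h^{3/2}\|u_h\|^2_{L^2(\Omega)}=(k^3h^2)^{3/4}k^{1/4}\|u_h\|^2_{L^2(\Omega)}$, which is not small. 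It can be rescued with the discrete identity $k\|u_h\|^2_{L^2(\partial\Omega)}=\Im F(u_h)$.

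The clean repair is to project with the G\aa rding-shifted form $b:=a+2k^2(\cdot,\cdot)_{\Omega}$, i.e. $b(v_h,z-P_hz)=0$ for all $v_h\in V_h$. Then:
\begin{itemize}
\item $\Re b(v,v)=\|v\|_V^2$, and $b$ is bounded on $V$ uniformly in $k$ by the multiplicative trace inequality;
\item the remainder is exactly $-2k^2(u_h,z-P_hz)_{\Omega}$;
\item the dual problem $-\Delta\psi+k^2\psi=r$, $\partial_n\psi\pm\mathrm{i}k\psi=0$ satisfies $\|\psi\|_V\le k^{-1}\|r\|_{L^2(\Omega)}$, whence $\|\Delta\psi\|_{L^2(\Omega)}\le 2\|r\|_{L^2(\Omega)}$ and $k\|\psi\|_{H^{1/2}(\partial\Omega)}\lesssim\|r\|_{L^2(\Omega)}$;
\item convex-polygon regularity then gives $|\psi|_{H^2(\Omega)}\lesssim\|r\|_{L^2(\Omega)}$, and hence $\|z-P_hz\|_{L^2(\Omega)}\lesssim h^2|z|_{H^2(\Omega)}$ uniformly in $k$.
\end{itemize}

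\textbf{Second, bounding $F(P_hz)$ through $\|P_hz\|_V\lesssim\|u_h\|_{L^2(\Omega)}$ loses a factor $k^{1/2}$ on $g$.} The trace inequality then gives only $\|P_hz\|_{L^2(\partial\Omega)}\lesssim k^{-1/2}\|u_h\|_{L^2(\Omega)}$. You therefore obtain $k\|u_h\|_{L^2(\Omega)}\lesssim\|f\|_{L^2(\Omega)}+k^{1/2}\|g\|_{L^2(\partial\Omega)}$, and finally $\|u_h\|_V\lesssim\|f\|_{L^2(\Omega)}+k^{1/2}\|g\|_{L^2(\partial\Omega)}$, which is weaker than the theorem.

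What you need is $\|z\|_{L^2(\partial\Omega)}\lesssim k^{-1}\|u_h\|_{L^2(\Omega)}$. Take the imaginary part of $a(z,z)=(z,u_h)_{\Omega}$: this gives $k\|z\|^2_{L^2(\partial\Omega)}=\Im(z,u_h)_{\Omega}\le\|z\|_{L^2(\Omega)}\|u_h\|_{L^2(\Omega)}\lesssim k^{-1}\|u_h\|^2_{L^2(\Omega)}$. Combine this with $\|z-P_hz\|_{L^2(\partial\Omega)}\lesssim h^{3/2}k\|u_h\|_{L^2(\Omega)}\lesssim k^{-1}\|u_h\|_{L^2(\Omega)}$, which holds under \eqref{eq:fine-mesh-cond}.

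With both repairs you get $\|u_h\|^2_{L^2(\Omega)}\lesssim k^{-1}(\|f\|_{L^2(\Omega)}+\|g\|_{L^2(\partial\Omega)})\|u_h\|_{L^2(\Omega)}+k^3h^2\|u_h\|^2_{L^2(\Omega)}$. For $C_0$ small this gives $k\|u_h\|_{L^2(\Omega)}\lesssim\|f\|_{L^2(\Omega)}+\|g\|_{L^2(\partial\Omega)}$, and your Step 1 then finishes the argument.
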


\begin{remark}\label{rm:mesh-resolution}
%The mesh resolution condition \eqref{eq:fine-mesh-cond} guarantees that the numerical solution $u_h$ achieves a higher accuracy as the wavenumber $k$ increases. 
%It can be relaxed if $H^1$-conforming piecewise $p$ order finite element space is used with $p$ large or mere existence of numerical solutions is pursued. 
The existence of numerical solutions with large wavenumber $k$ was studied in the seminal work of Ihlenburg and Babu\v{s}ka \cite{ihlenburg1995dispersion, ihlenburg1995finite}. Under the assumption that the stability of the Helmholtz problem is independent of $k$, the Galerkin method using the $h$-version of FEM is quasi-optimal when $k^{p+1}h^p$ is sufficiently small \cite{melenk1995generalized, sauter2006refined, melenk2010convergence, MR2812565}. For the $hp$-version (where accuracy is increased by decreasing $h$ and increasing $p$), the Galerkin method is quasi-optimal if $hk/p$ is sufficiently small and $p$ grows logarithmically with $k$ by \cite{melenk2010convergence, MR2812565}. %Under a weaker condition that $k^{2p+1}h^{2p}$ is small enough, the numerical solution exists and the numerical error is bounded by the data uniformly with respect  to $k$ \cite{du2015preasymptotic}. 
In all these cases with quasi-uniform meshes, the magnitude of the dimensions of the piecewise polynomial function spaces is much larger than $k^{d}$.
\end{remark}
\begin{remark}\label{final:1111}
Theorem \ref{thm_1} is valid if $a(\cdot,\cdot)$ in \eqref{eq:discrete-global} is replaced by $a^*(\cdot,\cdot)$. 
\end{remark}
\section{The hybrid Schwarz method}\label{sec:hybrid-2}
Now we present the basic assumption for the domain decomposition and the notation for the Schwarz preconditioners. Let 
$\mathcal{T}_H : = \{K_i~|~i=1,2,\cdots, N\}
$ be partition of the domain $\Omega$ into a quasi-uniform mesh with a diameter $H$ such that $\mathcal{T}_h$ is its subdivision, where $N$ is the number of the coarse elements (or the subdomains). Let $\Omega_{i}$ be the smallest subdomain associated with $K_{i}\in \mathcal{T}_H$, such that
$\bigcup\left\{T\in\mathcal{T}_h:\text{dist}(T,K_{i})\leq\delta_{i}\right\}\subset \Omega_i$ and the shape of $\Omega_i$ is similar to that of $K_i$.
Denote by $\delta\coloneqq\min\{\text{dist}(\partial K_i\backslash\partial\Omega,\partial \Omega_i\backslash\partial\Omega):{i}=1,\cdots,N\}$ the smallest value. Then $\{\Omega_{i}\}_{i=1}^N$ is an overlapping partition of $\Omega$, i.e., 
$\overline{\Omega}=\bigcup_{i=1}^N\overline{\Omega}_{i}$. We denote by $d_i\coloneqq\op{diam}(\Omega_i)$ the diameter of $\Omega_i$ and $\bar{d}\coloneqq \max\{d_i:i=1,\cdots,N\}$ the largest diameter of the subdomains.
%Throughout, we assume $\delta$ is small such that
%\begin{align}\label{eq:c-diameter}
%d_i\simeq H, \quad\forall i=1,\cdots,N.
%\end{align}
We make the {\it finite-overlap assumption}: there exists a finite number $\Lambda>1$ independent of $N$ such that
\begin{align}\label{eq:finite-overlap}
\Lambda=\max\left\{\#\Lambda_{i}:K_{i}\in\mathcal{T}_H\right\}, \quad \text{ with }\Lambda_{i}=\{j:\bar{\Omega}_{i}\cap\bar{\Omega}_{j}\neq \emptyset\}.
\end{align}

The diameter of each subdomain $d_i$ is required to satisfy the so-called scale resolution assumption. 
\begin{assumption}[Scale Resolution Assumption]\label{ass:resolution}
The diameter $d_i$ is sufficiently small so that 
\begin{align*}
\max_{i=1,2,\ldots,N}\{\Cpoinn{\Omega_i}{1/2}\}d_ik<1,
\end{align*}
with the Poincar\'{e} constant $\Cpoin{\Omega_i}: =\max_{v\in H^1_0(\Omega_i)}
\frac{\|v\|_{L^2(\Omega_i)}^2}{\|d_i\nabla v\|_{L^2(\Omega_i)}^2}$ depending on the shape of $\Omega_i$ but not on the diameter $d_i$.
\end{assumption}

Throughout, we denote 
\begin{equation}\label{eq:cest}
\Ce:=\Big(1-\max_{i=1,2,\ldots,N}\{\Cpoin{\Omega_i}\}(d_ik)^2\Big)^{-1}.
\end{equation}
Note that for most regular domains, the Friedrichs' inequality implies that $\Cpoin{\Omega_i}:= 1/\pi^2$. Hence, Assumption \ref{ass:resolution} is equivalent to $d_i<  \pi k^{-1}$.
Next, we introduce a discrete nonnegative-valued partition of unity $\{\chi_{i}\}_{i=1}^N\subset V_h$ subordinate to the cover $\{\Omega_{i}\}_{i=1}^N$, which is assumed to satisfy for some $C_{\op{G}}$ independent of $(h,k,\bar{d},\delta)$,
\begin{equation}\label{eq:pum-property}
\left\{
\begin{aligned}
\sum\limits_{i=1}^N\chi_{i}&=1 \text{ in } \bar{\Omega},\quad\text{and}\quad {\text{supp}(\chi_{i})}\subset\bar{\Omega}_{i},\\
 0\leq\chi_{i}&\leq 1,\quad\qquad\text{and}\quad 
\|\nabla \chi_{i}\|_{L^{\infty}(\bar{\Omega}_{i})}\leq {\delta_{i}}^{-1}C_{\op{G}}.
\end{aligned}
\right. 
\end{equation}
It follows immediately from \eqref{eq:finite-overlap} that, for $v_i\in H^1(\Omega_i)$,
\begin{equation}\label{eq:sumpou-ieq}
\left\|\sum_{i=1}^N \chi_i v_i\right\|_{L^2(\Omega)}^2 \le \Lambda \sum_{i=1}^N \left\| \chi_i v_i\right\|_{L^2(\Omega_i)}^2,\quad \text{and}\quad \left\|\sum_{i=1}^N\nabla( \chi_i v_i)\right\|_{L^2(\Omega)}^2 \le \Lambda \sum_{i=1}^N \left\| \nabla(\chi_i v_i)\right\|_{L^2(\Omega_i)}^2.    
\end{equation}

\begin{comment}
Then a slight modification of {\cite[Theorem 2.1]{melenk1996partition}} leads to the following result,
\begin{lemma}
\label{thm:pum}
Let $v_i\in V_{h,i}$ for $i=1,\cdots,N$, and let
\begin{align*}
v_h:=\sum_{i=1}^N \chi_i v_i\in V_h.
\end{align*}
Then there holds
\begin{align*}
\|v_h\|_{L^2(\Omega)}&\leq \sqrt{\Cov}C_{\infty}\left(\sum_{i=1}^N\|v_i\|_{L^2(\Omega_i)}^2\right)^{1/2},\\
\|\nabla v_h\|_{L^2(\Omega)}&\leq \sqrt{2\Cov}\left(\sum_{i=1}^NC_{\text{G}}^2\delta_i^{-2}\|v_i\|_{L^2(\Omega_i)}^2
+\|\chi_i\nabla v_i\|_{L^2(\Omega_i)}^2\right)^{1/2}.%\label{eq:glo-err-pum}
\end{align*}
\end{lemma}
\end{comment}

\subsection{RAS-imp preconditioner}
Now we formulate the one-level Schwarz domain decomposition method in an operator form.
First we define the linear operator ${A}:V_{h}\to V_{h}'$ by
\begin{align}\label{eq:Ah-linear}
\langle {A}v_h,w_h\rangle:=a(v_h,w_h),\quad \forall v_h, w_h \in V_{h},
\end{align}
where $\langle \cdot,\cdot\rangle$ denotes the duality pairing. 
Then the global linear system can be written as 
\begin{equation}
    \label{lin_form}
   A u_h=F_h,
\end{equation}
where $F_h\in V_h'$ is defined as $\langle F_h,v_h\rangle := F(v_h)$~ for all $v_h\in V_h$.

Next, we consider the local problems with impedance boundary conditions. Let the local function spaces be
$V_{h,i}:=\{v_h|_{\overline{\Omega}_{i}}: v_h\in V_h\}$ and define the local operator $A_i :V_{h,i}\to V_{h,i}'$ by 
\begin{align*}
\langle {A}_{i}u_i,v_i\rangle:=a_{i}(u_i,v_i), \forall u_i, v_i \in V_{h,i},
\end{align*}
where the local sesquilinear form $a_{i}(\cdot,\cdot)$ on the local space $V_{h,i} \times V_{h,i}$ is given by
\begin{equation*}
\label{local_ses}
    a_{i}(u_i,v_i) 
    := \int_{\Omega_i}\nabla u_i\cdot\nabla\overline{ v}_i\mathrm{d}x
    -k^2\int_{\Omega_i} u_i\overline{v}_i\;\mathrm{d}x
    + \mathrm{i}k\int_{\partial \Omega_i} u_i\overline{ v}_i \;\mathrm{d}s,\quad
    \forall u_i, v_i\in V_{h,i}.
\end{equation*}
The sesquilinear form $a_i(\cdot,\cdot)$ is bounded: there exists a constant $\Const{bdd}$ independent of wavenumber $k$
such that
    \begin{align}\label{eq:sesqui-bound-local}
    |a_i(v_1,v_2)|\leq \Const{bdd}(1+(kd_i)^{-1})\|v_1\|_{V(\Omega_i)}\|v_2\|_{V(\Omega_i)}, \quad\forall v_1,v_2\in V(\Omega_i).
    \end{align}
\begin{comment}
\begin{lemma}
If each $\Omega_{i}$ is starshaped with respect to a ball uniformly in $i$, then there exists a mesh threshold function $\bar{h}(k,r)$ such that when $h\leq \bar{h}(k,r)$, ${A}_{i}:V_{h,i}\to V_{h,i}'$ is invertible, and 
\begin{align*}
\|{A}_{i}^{-1}\|\leq 1+Hk.
\end{align*}
\end{lemma}
\end{comment}
Then we define prolongations and restrictions to link local and global problems. Note that the functions in $V_h$ and $V_{h,i}$ are uniquely determined by their nodal values at the nodes $\{x_j: j\in \mathcal{I}\}$ in $\overline{\Omega}$ and $\{x_j: j\in \mathcal{I}_i\}$ in $\overline{\Omega}_i$, respectively, using the index sets $\mathcal{I}$ and $\mathcal{I}_i$. We define the prolongation ${R}_i^T: V_{h,i} \to V_h$  by
\begin{align*}
R_i^Tv_i(x_j)=\left\{
\begin{aligned}
v_i(x_j), \quad j\in \mathcal{I}_i,\\
0,\quad \text{otherwise}.
\end{aligned}\right.
\end{align*}
Note that the prolongation ${R}_i^T$ is defined nodewise and ${R}_i^Tv_i $ is an $H^1$-conforming finite element approximation to the zero extension of $v_{i}\in V_{h,i}$. (The zero extension is not in $H^1(\Omega)$ in general). We define $R_i: V'_{h}\to V'_{h,i}$ by duality. To make the prolongation more stable, we employ the weighted prolongation $\tilde{R}_i^T:V_{h,i}\to V_h$ by the functions of partition of unity:
\begin{equation*}
\tilde{R}_i^T v_{i}:  = {R}_i^T \Pi_{h,i}(\chi_i v_i),
\end{equation*}
where $\Pi_{h,i}$ is the nodal interpolation of $V_{h,i}$. The nodal interpolation operator $\Pi_{h,i}$ has the stability property (see, e.g., \cite{MR520174}),
\begin{align}\label{eq:nodal-interp}
\left\|(I-\Pi_{h,i})v_i\right\|_{L^2(\Omega_i)}
+h\left\|\nabla (I-\Pi_{h,i}) v_i\right\|_{L^2(\Omega_i)}
\leq \Const{I}h\left\|\nabla v_i\right\|_{L^2(\Omega_i)},\quad\forall v_i\in V_{h,i}.
\end{align}
Here, $\Const{I}$ depends only on the shape regularity of $\mathcal{T}_h$. 
Note that 
\begin{align}\label{eq:pou}
\sum_{i=1}^N \tilde{R}_i^T(v_h|_{\Omega_i})=v_h, \quad\forall v_h\in V_h.
\end{align}
The restricted additive Schwarz preconditioner with local impedance boundary condition (RAS-imp) is the operator $B_{\op{RAS-imp}}: V_{h}'\to V_h$ defined by
\begin{equation*}
B_{\op{RAS-imp}} = \sum_{i=1}^N \tilde{R}_i^T  A_i^{-1}R_i.
\end{equation*}
The preconditioned operator $B_{\op{RAS-imp}} A :V_h\to V_h$ can be written as
\begin{equation*}
B_{\op{RAS-imp}}A = \sum_{i=1}^N  \tilde{R}_i^T Q_i,
\end{equation*}
where the local Schwarz operator $Q_i:V_h\to V_{h,i}$ is defined by
\begin{align*}
a_i(Q_i v, w_i) = a(v,R_i^T w_i),\quad \forall v\in V_h\text{ and } w_i \in V_{h,i}.
\end{align*}
\begin{lemma}[Local-global approximation]
\label{thm:pum-2}
Let $v_i\in V_{h,i}$~~ for~~ $i=1,2,\cdots,N$, and let
$v_h:=\sum_{i=1}^N \tilde{R}_i^T v_i\in V_h$.
Then there exists $\Const{I}$ independent of $(k,h,H)$ such that
\begin{align}
\|v_h\|_{V(\Omega)}\leq \sqrt{\Lambda}(1+\Const{I})\left(\sum_{i=1}^N \|\chi_iv_i\|_{V(\Omega_i)}^2
\right)^{1/2}.\label{eq:glo-err-pum3}
\end{align}
\begin{comment}
and 
\begin{align}\label{eq:glo-err-pum2}
\|v_h\|_{V}\leq \sqrt{\Lambda}(1+\Const{I})\left(C_{\infty}+k^{-1}
\delta^{-1}C_{\op{G}}\right)\left(\sum_{i=1}^N \|v_i\|_{V(\Omega_i)}^2
\right)^{1/2}.
%\leq \sqrt{\Lambda}(1+\Const{I})\left(\sum_{i=1}^N \|\chi_iv_i\|_{V(\Omega_i)}^2\right)^{1/2}.
\end{align}
\end{comment}
\end{lemma}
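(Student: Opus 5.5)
Write $w_i:=\chi_i v_i$, so that $\tilde{R}_i^T v_i = R_i^T \Pi_{h,i} w_i$ and $v_h=\sum_{i=1}^N R_i^T\Pi_{h,i}w_i$. The plan is to compare $v_h$ with the function $\sum_i \tilde w_i$, where $\tilde w_i$ is the zero extension of $w_i$ to $\Omega$. Since $\chi_i\in V_h$ has support in $\bar\Omega_i$ and vanishes on $\partial\Omega_i\setminus\partial\Omega$ (it is nonnegative and sums to one, with support in $\bar\Omega_i$), the product $w_i$ vanishes on the interior part of $\partial\Omega_i$. Hence $\tilde w_i\in H^1(\Omega)$, although it is piecewise quadratic rather than in $V_h$. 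Likewise, $\Pi_{h,i}w_i$ vanishes at the nodes on $\partial\Omega_i\setminus\partial\Omega$. Therefore $R_i^T\Pi_{h,i}w_i$ is exactly the zero extension of $\Pi_{h,i}w_i$, and it equals $\Pi_h\tilde w_i$. Consequently
\[
v_h=\sum_{i=1}^N \widetilde{\Pi_{h,i}w_i},\qquad \operatorname{supp}\widetilde{\Pi_{h,i}w_i}\subset\bar\Omega_i .
\]

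Next I would apply the finite-overlap estimate \eqref{eq:sumpou-ieq}, or rather the Cauchy--Schwarz argument behind it, which uses only the supports and the bound $\Lambda$ from \eqref{eq:finite-overlap}. This gives
\[
\|v_h\|_{V(\Omega)}^2\le \Lambda\sum_{i=1}^N\|\Pi_{h,i}w_i\|_{V(\Omega_i)}^2 .
\]
The identity $\|\cdot\|_V^2=\|\nabla\cdot\|^2+k^2\|\cdot\|^2$ lets me treat the gradient and $L^2$ parts simultaneously, since each obeys the same pointwise overlap count.

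It then remains to show $\|\Pi_{h,i}w_i\|_{V(\Omega_i)}\le(1+\Const{I})\|w_i\|_{V(\Omega_i)}$. I would use the triangle inequality $\|\Pi_{h,i}w_i\|\le\|w_i\|+\|(I-\Pi_{h,i})w_i\|$ and then the interpolation estimate \eqref{eq:nodal-interp}. One subtlety is that \eqref{eq:nodal-interp} is stated for $v_i\in V_{h,i}$, where $(I-\Pi_{h,i})v_i$ is trivially zero. What is actually needed is the elementwise version for piecewise quadratics $w_i=\chi_i v_i$, in which the $H^1$ seminorm controls the interpolation error on each element; this holds because on a finite-dimensional polynomial space the interpolation error vanishes on constants and is bounded by the seminorm via scaling. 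With this version, $\|\nabla(I-\Pi_{h,i})w_i\|\le\Const{I}\|\nabla w_i\|$ and $k\|(I-\Pi_{h,i})w_i\|\le \Const{I}hk\|\nabla w_i\|\le\Const{I}\|\nabla w_i\|$. The last inequality uses $hk\le1$, which follows from \eqref{eq:fine-mesh-cond} for $k\ge k_0$, after adjusting constants.

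Combining the two error bounds gives $\|(I-\Pi_{h,i})w_i\|_{V(\Omega_i)}\le\Const{I}\|w_i\|_{V(\Omega_i)}$, after possibly rescaling $\Const{I}$ by a factor $\sqrt2$. Inserting this into the overlap estimate yields \eqref{eq:glo-err-pum3}. The main obstacle I expect is justifying the interpolation bound for the non-discrete product $\chi_iv_i$ with a constant independent of $k$, $h$ and $H$; the support and zero-extension bookkeeping is routine.
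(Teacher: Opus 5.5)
Your proof is correct and follows essentially the paper's route. Both use the triangle inequality, the interpolation bound \eqref{eq:nodal-interp} applied to the piecewise-quadratic product $\chi_i v_i$, the finite-overlap count, and $kh\lesssim 1$ to absorb the $k$-weighted $L^2$ term; the only difference is that you apply the overlap estimate once in the $V$-norm to $\Pi_{h,i}(\chi_i v_i)$, whereas the paper treats the $L^2$ and gradient parts separately before combining. You are right that \eqref{eq:nodal-interp}, as literally stated for $V_{h,i}$, is vacuous and must be read elementwise for piecewise quadratics; the paper uses it that way implicitly.
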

\begin{proof}
A combination of the triangle inequality, the stability of  $\Pi_{h,i}$ \eqref{eq:nodal-interp} and \eqref{eq:sumpou-ieq}, together with the finite-overlap assumption \eqref{eq:finite-overlap}, leads to
\begin{align}
\|v_h\|_{L^2(\Omega)}&\leq \sqrt{\Lambda}\left(\sum_{i=1}^N \|\chi_iv_i\|_{L^2(\Omega_i)}^2
+\frac{1}{2}\Const{I}^2h^2\left\|\nabla (\chi_i v_i)\right\|_{L^2(\Omega_i)}^2\right)^{1/2},\nonumber\\
\|\nabla v_h\|_{L^2(\Omega)}&\leq \sqrt{\Lambda(1+\frac{1}{2}\Const{I}^2)}\left(\sum_{i=1}^N
\left\|\nabla (\chi_i v_i)\right\|_{L^2(\Omega_i)}^2\right)^{1/2}.\label{eq:glo-err-pum2_2}
\end{align}
By the definition of $\Vert\cdot\Vert_{V}$, we get
\begin{equation}
    \begin{aligned}
        \|v_h\|_{V}^2&=\Vert\nabla v_h\Vert_{L^2(\Omega)}^2+k^2\Vert v_h\Vert_{L^2(\Omega)}^2\\
        &\leq\sum_{i=1}^Nk^2\Lambda \|\chi_iv_i\|_{L^2(\Omega_i)}^2
+\Lambda\left(1+\frac{1}{2}\Const{I}^2+\frac{1}{2}k^2h^2\Const{I}^2\right)\left\|\nabla (\chi_i v_i)\right\|_{L^2(\Omega_i)}^2\\
        &\leq \Lambda(1+\Const{I}^2)\sum_{i=1}^N \|\chi_iv_i\|_{V(\Omega_i)}^2,
    \end{aligned}
\end{equation}
where the last inequality follows from the fact that $kh<1$. This yields the desired assertion. 
\begin{comment}
Since 
\begin{align*}
    \|\chi_iv_i\|_{V(\Omega_i)}&\leq\left(k^2\left(C_{\infty}^2+k^{-2}{\delta_{i}}^{-2}C_{\op{G}}^2\right)\|v_i\|_{L^2(\Omega_i)}^2+C_{\infty}^2\|\nabla v_i\|_{L^2(\Omega_i)}^2\right)^{1/2}\\
    &\leq\left(C_{\infty}^2+k^{-2}{\delta_{i}}^{-2}C_{\op{G}}^2\right)^{1/2}\|v_i\|_{V(\Omega_i)},
\end{align*}
this yields the second assertion.
\end{comment}
\end{proof}
\subsection{The EMs-HS preconditioner}
The hybrid preconditioner is a multiplicative assembly of the one level RAS-imp preconditioner $B_{\op{RAS-imp}}$ with a coarse solver, which is determined by a properly defined coarse space $V_{h,0}\subset V_h$ (see Section \ref{sec:coarse-space} for details). Assuming a natural prolongation $R_0^T:V_{h,0} \to V_h$, we define the coarse problem 
\begin{align*}
A_0 := R_0 A R_0^T : V_{h,0}\to V_{h,0}'.
\end{align*}
where $R_0 :V_{h}'\to V_{h,0}'$ is defined by duality. Then the EMs-HS preconditioner is defined by 
\begin{equation*}
B_{\op{hybrid}} : =  R_0^TA_0^{-1} R_0(I - A B_{\op{RAS-imp}}) + B_{\op{RAS-imp}}.
\end{equation*}
Note that the RAS-imp preconditioner $B_{\op{RAS-imp}}$ is applied only once in the computation of $B_{\op{hybrid}}$. More precisely, given a residual equation $Ae = r$, one first computes the one-level correction by $e^{(1)} = B_{\op{RAS-imp}}r$, then calculates the new residual $r^{(1)} = r - A e^{(1)}$ and solves for the coarse correction $e^{(2)} = R_0^TA_0^{-1} R_0r^{(1)}$. Then the final correction is  
$B_{\op{hybrid}} r = e^{(1)}+e^{(2)}$.

The  preconditioned operator can be written as 
\begin{equation*}
\begin{aligned}
B_{\op{hybrid}}A &= R_0^TA_0^{-1} R_0A(I -  B_{\op{RAS-imp}}A) + B_{\op{RAS-imp}}A\\
%&= R_0^TQ_0(I - B_{RAS-imp}A) + B_{RAS-imp}A\\
\end{aligned}.
\end{equation*}
Analogously, we define the coarse Schwarz operator $Q_0 = A_0^{-1} R_0A:V_h\to V_{h,0} $. Using the  preconditioner $B_{\op{hybrid}}$, we obtain a fixed-point iteration for \eqref{lin_form}
\begin{align*}
u^{(n+1)}_h = u^{(n)}_h +B_{\op{hybrid}} (F_h - A u^{(n)}_h), 
\end{align*}
with its associate error transfer operator being
\begin{equation*}
\begin{aligned}
I-B_{\op{hybrid}}A  &= \left(I-R_0^TQ_0\right)\left(I - B_{\op{RAS-imp}}A\right) 
 = \Big(I-R_0^TQ_0\Big)\left(I - \sum_{i=1}^N\tilde{R}^T_i Q_i\right).
\end{aligned}
\end{equation*}
Furthermore, \eqref{eq:pou} implies
\begin{equation*}
\left(I-B_{\op{hybrid}}A \right)v_h 
= \left(I-R_0^TQ_0\right)\sum_{i=1}^N \tilde{R}^T_i  \left(v_h|_{\Omega_i}- Q_i v_h\right) \quad\forall v_h\in V_h.
\end{equation*}
In the analysis, we define, for any $v_h\in V_h$, 
\begin{align}
v_h^{\partial}:=\sum_{i=1}^N \tilde{R}^T_{i}\left(v_h|_{\Omega_i}- Q_i v_h\right)
\quad\mbox{and then } \left(I-B_{\op{hybrid}}A\right) v_h=\left(I-R_0^TQ_0\right)v_h^{\partial}. \label{eq:proj-edge-err}
\end{align}
The main objective is to show that the operator $I-B_{\op{hybrid}}A $ is contractive under an explicit requirement on $(h,k,\bar{d},\delta)$.

We end this section with an auxiliary result on the discrepancy between the local Schwarz projection $Q_i v_h$ and the restriction  $v_h|_{\Omega_i}$.

\begin{lemma}\label{lem:decomp}
Let the fine grid mesh size $h$ and the diameter $d_i$ of each subdomain $\Omega_i$ satisfy \eqref{eq:fine-mesh-cond} and Assumption \ref{ass:resolution}, respectively. Given $v_h\in V_h$, let $v_h^{\partial}$ be defined in \eqref{eq:proj-edge-err}.  Then with $\sigma:=(k\bar{d})^{-3/2}(kh)^{-1/2}$, there exists $C_{\partial}$ independent of $(h,k,\bar{d},\delta)$ such that
\begin{align}\label{eq:vhpar}
\left(\sum_{i=1}^N\left\|Q_i v_h-v_h|_{\Omega_i}\right\|_{V(\Omega_i)}^2\right)^{1/2}\leq C_{\partial}\sigma \|v_h\|_V.
\end{align}
\end{lemma}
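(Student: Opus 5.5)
The plan is to fix $i$ and set $e_i:=Q_i v_h-v_h|_{\Omega_i}\in V_{h,i}$. Then I bound $\|e_i\|_{V(\Omega_i)}$ by local quantities of $v_h$ near $\partial\Omega_i$, and finally sum over $i$ using the finite-overlap assumption \eqref{eq:finite-overlap}.

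\emph{Error equation.} By definition of $Q_i$, for all $w_i\in V_{h,i}$,
\[
a_i(e_i,w_i)=a(v_h,R_i^Tw_i)-a_i(v_h|_{\Omega_i},w_i).
\]
The volume terms of $a$ and $a_i$ agree on $\Omega_i$, except on the layer of fine elements $S_i$ touching $\partial\Omega_i\setminus\partial\Omega$, where $R_i^Tw_i$ differs from the zero extension of $w_i$. The impedance terms agree on $\partial\Omega_i\cap\partial\Omega$. The residual is therefore supported on the one-element strip $S_i$ and on $\Gamma_i:=\partial\Omega_i\setminus\partial\Omega$. It reads
\[
a_i(e_i,w_i)=\int_{S_i}\big(\nabla v_h\cdot\nabla\overline{R_i^Tw_i}-k^2v_h\overline{R_i^Tw_i}\big)\dx-\mathrm{i}k\int_{\Gamma_i}v_h\overline{w_i}\ds+(\text{strip correction}).
\]
If $R_i^T w_i$ is genuinely zero outside $\Omega_i$ up to the strip, the strip terms only involve the nodes on $\Gamma_i$. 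In short, $e_i$ is a discrete local Helmholtz solution driven by impedance-type boundary data $\partial_n v_h+\mathrm{i}kv_h$ on $\Gamma_i$, realized discretely through $S_i$.

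\emph{Local stability.} I would establish a $k$-explicit stability for $A_i^{-1}$ on $\Omega_i$ under Assumption \ref{ass:resolution}. The plan is to test with $w_i=e_i$ and take the imaginary part, which controls $k\|e_i\|^2_{L^2(\partial\Omega_i)}$. The real part plus the Poincaré-type estimate, with constant $\Ce$ from the small-domain condition $\Cpoin{\Omega_i}(d_ik)^2<1$, then controls $\|\nabla e_i\|$ and $k\|e_i\|$. A Friedrichs inequality with a boundary term, $\|e\|^2_{L^2(\Omega_i)}\lesssim d_i^2\|\nabla e\|^2+d_i\|e\|^2_{L^2(\partial\Omega_i)}$, handles the non-vanishing trace. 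This costs powers of $(kd_i)^{-1}$, which is where the factor $(k\bar d)^{-3/2}$ should come from.

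\emph{Residual bound.} The residual functional is bounded via discrete trace and inverse inequalities on $S_i$:
\[
\|w_i\|_{L^2(S_i)}\lesssim h^{1/2}\|w_i\|_{L^2(\Gamma_i)}+h\|\nabla w_i\|_{L^2(S_i)},\qquad \|\nabla v_h\|_{L^2(S_i)}\lesssim\|\nabla v_h\|_{L^2(\Omega_i)}.
\]
With the trace of $v_h$ on $\Gamma_i$ bounded by $h^{-1/2}\|v_h\|_{L^2(S_i)}$, this produces the factor $(kh)^{-1/2}$. Combining with the previous step gives
\[
\|e_i\|_{V(\Omega_i)}\lesssim (k d_i)^{-3/2}(kh)^{-1/2}\|v_h\|_{V(\Omega_i)}.
\]
Squaring and summing with \eqref{eq:finite-overlap} then yields \eqref{eq:vhpar}.

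\emph{Main obstacle.} The hardest step is the local stability estimate. One must get the precise exponents $3/2$ and $1/2$ when balancing the boundary-trace control from the imaginary part against the $H^1$ control, and must handle the mismatch between $R_i^Tw_i$ and a true zero extension. For the latter, I would first reduce the strip contributions to boundary integrals using inverse estimates, then apply the energy argument with $w_i=e_i$. \eqref{eq:fine-mesh-cond} ensures that the $kh$ factors are harmless.
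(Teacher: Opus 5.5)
Your structure is the paper's: a local estimate on each $\Omega_i$, then square-summation using finite overlap. The difference is that the paper does not prove the local estimate. It imports $\|Q_iv_h-v_h|_{\Omega_i}\|_{V(\Omega_i)}\lesssim\sigma_i\|v_h\|_{V(\widetilde\Omega_i)}$, with $\sigma_i=(kd_i)^{-3/2}(kh)^{-1/2}$, from \cite[Lemma 4.4]{MR4890903}, where $\widetilde\Omega_i$ is the one-element enlargement of $\Omega_i$. Your residual/energy argument is a viable self-contained substitute, but one step is false.

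\emph{The false step.} Since $R_i^Tw_i=w_i$ on all of $\Omega_i$, the layer $S_i$ where it differs from the zero extension lies outside $\Omega_i$, in $\widetilde\Omega_i\setminus\Omega_i$. There $v_h$ depends on nodal values outside $\overline\Omega_i$, so $\|\nabla v_h\|_{L^2(S_i)}\lesssim\|\nabla v_h\|_{L^2(\Omega_i)}$ is wrong. Your final local bound in terms of $\|v_h\|_{V(\Omega_i)}$ is therefore also wrong. Take $v_h$ to be the hat function of a node outside $\overline\Omega_i$ adjacent to your $\Gamma_i$. Then $v_h|_{\Omega_i}=0$, but in general $a(v_h,R_i^Tw_i)\neq0$ for the hat $w_i$ of a neighbouring node on $\Gamma_i$, so $Q_iv_h\neq0$. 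The right-hand side must be $\|v_h\|_{V(\widetilde\Omega_i)}$, as in the paper; the summation is unaffected because $\{\widetilde\Omega_i\}$ still has finite overlap. Likewise, the estimates you need on $S_i$ concern $R_i^Tw_i$, not the interior-strip inequality you wrote. By nodal-norm equivalence, $\|R_i^Tw_i\|_{L^2(S_i)}\lesssim h^{1/2}\|w_i\|_{L^2(\partial\Omega_i)}$ and $\|\nabla R_i^Tw_i\|_{L^2(S_i)}\lesssim h^{-1/2}\|w_i\|_{L^2(\partial\Omega_i)}$.

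\emph{Exponent bookkeeping.} Your bookkeeping is off too, though harmlessly: no power of $kd_i$ comes from local stability. From $\|e\|^2_{L^2(\Omega_i)}\le C_1d_i^2\|\nabla e\|^2_{L^2(\Omega_i)}+C_2d_i\|e\|^2_{L^2(\partial\Omega_i)}$ you get $k^2\|e\|^2_{L^2(\Omega_i)}\le C_1(kd_i)^2\|\nabla e\|^2_{L^2(\Omega_i)}+C_2kd_i\,\Im a_i(e,e)$. Hence $\|e\|^2_{V(\Omega_i)}\lesssim|a_i(e,e)|$ uniformly, provided $C_1(kd_i)^2\le1-\epsilon$. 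Note that Assumption \ref{ass:resolution} is phrased with the Dirichlet constant $\Cpoin{\Omega_i}$, which does not apply to $e_i$. You can take $C_1$ close to $\Cpoin{\Omega_i}$ at the cost of $C_2$, or use $k\bar d=c_0$ small as in Theorem \ref{prop:wavelet-basedconv}.

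Every residual term sees $w_i$ only through its trace. So $|a_i(e_i,w_i)|\le R\|w_i\|_{L^2(\partial\Omega_i)}$ with $R\lesssim h^{-1/2}\|v_h\|_{V(\widetilde\Omega_i)}$, using $kh\le1$. Test with $w_i=e_i$ and use $k\|e_i\|^2_{L^2(\partial\Omega_i)}=\Im a_i(e_i,e_i)$. This gives $\|e_i\|_{L^2(\partial\Omega_i)}\le R/k$, hence $\|e_i\|^2_{V(\Omega_i)}\lesssim R^2/k$, that is, $\|e_i\|_{V(\Omega_i)}\lesssim(kh)^{-1/2}\|v_h\|_{V(\widetilde\Omega_i)}$. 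Since $k\bar d\lesssim1$, this is already $\lesssim\sigma\|v_h\|_{V(\widetilde\Omega_i)}$, so there are no exponents to balance. Do not bound $\|w_i\|_{L^2(\Gamma_i)}$ by a discrete trace inequality $h^{-1/2}\|w_i\|_{L^2}$ instead; that only yields $(kh)^{-1}$, which is too weak in the regime of Theorem \ref{prop:wavelet-basedconv}.

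Once repaired, your route is self-contained, gives a sharper local factor, and avoids a tacit step in the paper's proof. Because $d_i\le\bar d$, one has $\sigma_i\ge\sigma$, so the paper's passage from $\sigma_i$ to $\sigma$ implicitly needs $d_i\simeq\bar d$, for instance from quasi-uniformity of $\mathcal T_H$. The paper's route buys brevity, at the price of resting entirely on the external lemma.
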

\begin{proof}
By \cite[Lemma 4.4]{MR4890903},  for each $i=1,\cdots,N$, there exists $\sigma_i:=(kd_i)^{-3/2}(kh)^{-1/2}$ such that
\begin{align}\label{eq:local-harmonic}
\left\|Q_i v_h-v_h|_{\Omega_i}\right\|_{V(\Omega_i)}\lesssim \sigma_i\|v_h\|_{V(\widetilde{\Omega}_{i})},
\end{align}
where $\widetilde{\Omega}_{i}$ satisfies $
\overline{\widetilde{\Omega}}_{i}:=\cup\{\overline{T}\in \mathcal{T}_h: \overline{T}\cap\Omega_i\neq \emptyset\}.$
%This, Theorem \ref{thm:pum-2} and \eqref{eq:proj-edge-err} lead to 
%\begin{align*}
%\|v_h^{\partial}\|_V^2
%&\lesssim(1+\delta_i^{-2}k^{-2})\sum_{i=1}^N \left\|Q_i v_h-v_h|_{\Omega_i}\right\|_{V(\Omega_i)}^2\lesssim\sigma^2
%\sum_{i=1}^N \|v_h\|_{V(\widetilde{\Omega}_{i})}^2.
%\end{align*}
Note that the finite-overlap assumption \eqref{eq:finite-overlap} ensures that $\{\widetilde{\Omega}_i\}_{i=1}^N$ satisfies the same assumption. Hence, taking square summation over both sides of \eqref{eq:local-harmonic}, this yields the desired assertion.
\end{proof}
%========================================================
\section{Coarse space}\label{sec:coarse-space}
To define the coarse space, we employ the local Helmholtz harmonic space $V_i^\partial$ with Dirichlet boundary data
\begin{align}\label{eq:local-space}
V_i^{\partial}=\{v_h\in V_{h,i}: a_i(v_h,w_h)=0\; \forall w_h\in V_{h,i}^0\}.
\end{align}
Here, $V_{h,i}^0\subset V_{h,i}$ denotes the space consisting of functions vanishing on the boundary $\partial\Omega_i$, namely, 
\[
V_{h,i}^0:=\{v_h\in V_{h,i}:\;v_h=0\text{ on }\partial\Omega_i\}.
\]
Utilizing the weighted prolongation $\tilde{R}_i^T: V_{h,i}\to V_h$ and the local spaces $V_i^{\partial}$, we define a global space
\begin{align}\label{eq:glo-edge-space}
V^{\partial}\coloneqq\mathrm{span}\left\{\sum_{i=1}^N \tilde{R}_i^T v_i^{\partial}: v_i^{\partial}\in V_i^{\partial}\right\}.
\end{align}
Next, we construct the coarse space $V_{h,0}$ following \cite{MR4860976}, which approximates the global space $V^{\partial}$ \eqref{eq:glo-edge-space} and also serves as a good ansatz space for the adjoint problem \eqref{eqn:weakformDual}. 
%=======================================
\subsection{Hierarchical subspace splitting over $I=:[0,1]$}\label{sec:wavelets}
%=============================
First, we introduce the hierarchical bases on the unit interval $I:=[0,1]$, which facilitate hierarchically splitting the space $L^2(I)$ \cite{MR1162107}. Let $\ell\in \mathbb{N}$ be the level parameter and $h_{\ell}:=2^{-\ell}$ be the mesh size, respectively. The grid points on level $\ell$ are given by
\[
x_{\ell,j}=j\times h_{\ell},\quad 0\leq j\leq 2^{\ell}.
\]
%Let the scaling function and the mother wavelet be given by
%By means of dilation and translation, the mother wavelet $\psi(x)$ can result in orthogonal decomposition of the space $L^2(I)$. To this end,
We define the bases on level $\ell$ by
\begin{equation*}
\psi_{\ell,j}(x)=
\left\{
\begin{aligned}
&1-|x/h_{\ell}-j|, &&\text{ if }  x\in [(j-1)h_{\ell},(j+1)h_{\ell}]\cap [0,1],\\
&0, &&\text{ otherwise},
\end{aligned}
\right.
%\qquad
%\psi(x)=
%\left\{
%\begin{aligned}
%&1, &&\text{ if } 0\leq x\leq 1/2,\\
%&-1, &&\text{ if }1/2< x\leq 1,\\
%&0, &&\text{ otherwise}.
%\end{aligned}
%\right.
\end{equation*}
and the index set on each level $\ell$ by
\begin{equation*}
B_{\ell}:=\Bigg\{
j\in\mathbb{N}\Bigg|
\begin{aligned}
&j=1,\cdots,2^{\ell}-1, j \,\rm{ is\, odd }, &&\rm{if }\,\ell>0\\
&j=0,1,&&\rm{ if }\,\ell=0
\end{aligned}
\Bigg\}.
\end{equation*}
%Note that the set $B_{\ell}$ has the nesting property, namely, $B_{\ell-1}\subset B_{\ell}$ for all $1<\ell\in\mathbb{N}_{+}$.
%\begin{align*}
%\psi_{\ell,j}(x):=2^{\ell/2}\psi(2^{\ell}x-j) \quad \text{ for all }\quad 0\leq j\leq 2^{\ell-1}-1.
%\end{align*}
The subspace of level $\ell$ is
$W_{\ell}:=\text{span}\{\psi_{\ell,j}: j\in B_{\ell}\}$.
%The nesting property of the set $B_{\ell}$ leads to the nesting property of the subspace $W_{\ell}$, namely, the subspace $W_{\ell}\subset W_{\ell'}$ for any two different levels $\ell<\ell'$.
We denote by $V_{\ell}$ the subspace in $L^2(I)$ up to level $\ell$, which is the direct sum of subspaces $V_{\ell}:=\oplus_{m\leq\ell}W_{m}$.
This gives a hierarchical structure of the subspace $V_{\ell}$:
$V_{0}\subset V_{1}\subset \cdots\subset V_{\ell}\subset V_{\ell+1}\cdots$.
Furthermore, the following hierarchical decomposition of the space $L^2(I)$ holds
\[
L^2(I)=\lim_{\ell\to\infty}\oplus_{m\leq\ell}W_{m}.
\]
The hierarchical decomposition of the space $L^2(I^{d-1})$ for $d=2,3$ can be defined by means of tensor product, which is denoted as $V_{\ell}^{\otimes^{d-1}}$. We will use the subspace $V_{\ell}^{\otimes^{d-1}}$ to approximate the restriction of the exact solution $u$ on the coarse skeleton $\partial\mathcal{T}^H:=\cup_{T\in \mathcal{T}^H}\partial T$.

Next, we present the approximation properties of the hierarchical space $V_{\ell}$, which can be derived from standard $L^2$-projection error \cite{MR520174} combining with the interpolation method.
\begin{proposition}[Approximation properties of the hierarchical space $V_{\ell}$]\label{prop:approx-wavelets}
Let $d=2,3$, $s\in (0,1]$ and let $\mathcal{I}_{\ell}: C(I^{d-1})\to V_{\ell}^{\otimes^{d-1}}$ be $L^2$-projection for each level $\ell\geq 0$, then there holds,
\begin{align}\label{eq:approx-wavelets}
%P_{\ell+1} v&=P_{\ell}v+\sum\limits_{j=0}^{2^{\ell}-1}(v,\psi_{\ell+1,j})_{I}\psi_{\ell+1,j}&\text{for all }v\in L^2(I)\\
\|v-\mathcal{I}_{\ell}v\|_{L^2(I^{d-1})}&\lesssim 2^{-s\ell}|v|_{H^s(I^{d-1})}\quad\text{for all }v\in H^s(I^{d-1}).
\end{align}
Here, $|\cdot|_{H^s(I^{d-1})}$ denotes the Gagliardo seminorm in the fractional Sobolev space (or Slobodeskii space) $H^s(I^{d-1})$, given by
\begin{align*}
|v|^2_{H^s(I^{d-1})}:=\int_{I^{d-1}}\int_{I^{d-1}}\frac{|v(x)-v(y)|^2}{|x-y|^{d-1+2s}}\mathrm{d}x\mathrm{d}y.
\end{align*}
The corresponding full norm is denoted as $\|\cdot\|_{H^s(I^{d-1})}$.
\end{proposition}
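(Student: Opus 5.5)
The plan is to prove \eqref{eq:approx-wavelets} by interpolating between the two endpoint cases $s=0$ and $s=1$, using that $\mathcal{I}_{\ell}$ is an $L^2$-projection onto a finite element space. First we identify $V_{\ell}$. Since $V_{\ell}=\oplus_{m\leq \ell}W_m$ is the hierarchical (nodal) basis representation, $V_\ell$ coincides with the space of continuous piecewise linear functions on the uniform grid $\{x_{\ell,j}\}_{j=0}^{2^\ell}$ of $I$ with mesh size $h_\ell=2^{-\ell}$. Consequently $V_\ell^{\otimes^{d-1}}$ is the space of continuous piecewise multilinear ($Q_1$) functions on the uniform tensor grid of $I^{d-1}$ with mesh size $2^{-\ell}$, which is quasi-uniform and shape regular uniformly in $\ell$.

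For the endpoint $s=0$, $\mathcal{I}_\ell$ is an orthogonal projection, so $\|v-\mathcal{I}_\ell v\|_{L^2(I^{d-1})}\leq \|v\|_{L^2(I^{d-1})}$. For the endpoint $s=1$, the best-approximation property of the $L^2$-projection gives $\|v-\mathcal{I}_\ell v\|_{L^2}\leq \|v-\Pi v\|_{L^2}$ for any quasi-interpolant $\Pi$ into $V_\ell^{\otimes^{d-1}}$. We take $\Pi$ to be a Cl\'ement or Scott--Zhang type operator (the nodal interpolant is not defined on $H^1$ for $d-1=2$), for which the standard estimate \cite{MR520174} yields $\|v-\Pi v\|_{L^2}\lesssim 2^{-\ell}|v|_{H^1(I^{d-1})}$. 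Since $\mathcal{I}_\ell$ reproduces constants, we may replace $v$ by $v-\bar v$, where $\bar v$ is the mean of $v$. Combined with Poincar\'e's inequality, this puts both endpoint estimates in a form involving only the seminorm at $s=1$ and the full $L^2$ norm at $s=0$.

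We then interpolate. The operator $T=I-\mathcal{I}_\ell$ satisfies $\|T\|_{L^2\to L^2}\leq 1$ and $\|T\|_{H^1\to L^2}\lesssim 2^{-\ell}$. The K-method (real interpolation) with $[L^2,H^1]_{s,2}=H^s$, which holds with equivalent norms on the Lipschitz domain $I^{d-1}$ and equals the Slobodeskii space for $s\in(0,1)$, then gives $\|Tv\|_{L^2}\lesssim 2^{-s\ell}\|v\|_{H^s(I^{d-1})}$.

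The main obstacle is passing from the full norm to the Gagliardo seminorm $|v|_{H^s}$. To handle it, we again use $T(v-\bar v)=Tv$ and the fractional Poincar\'e inequality $\|v-\bar v\|_{L^2(I^{d-1})}\lesssim |v|_{H^s(I^{d-1})}$. This inequality follows directly from Jensen's inequality: $|v(x)-\bar v|^2\leq\int|v(x)-v(y)|^2\,dy$, and $|x-y|\leq \sqrt{d-1}$ on the cube. This yields $\|v-\bar v\|_{H^s}\lesssim |v|_{H^s}$, and hence \eqref{eq:approx-wavelets}. The case $s=1$ follows directly from the second endpoint estimate. A mild care point is that the interpolation constants must be independent of $\ell$; this holds because $\ell$ enters only through the operator norms of $T$, not through the function spaces.
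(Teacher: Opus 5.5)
Your proposal is correct and follows the route the paper indicates. The paper only says the estimate follows from the standard $L^2$-projection error bound of \cite{MR520174} combined with interpolation; you make this precise through the $s=0$ and $s=1$ endpoint bounds, real interpolation $[L^2,H^1]_{s,2}=H^s$, and the mean-subtraction/fractional Poincar\'e step, which the paper leaves implicit but which is needed to obtain the seminorm on the right-hand side (and your reading of the $s=1$ case with $|v|_{H^1}=\|\nabla v\|_{L^2}$ is the right one, since the Gagliardo double integral as written is infinite for every nonconstant $v$ when $s=1$).
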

\subsection{Coarse space}
%==============================
Now we present the construction of the coarse space $V_{h,0}$. The key is to
find local multiscale bases in each coarse subdomain $\Omega_i$, having certain approximation properties to $V_i^{\partial}$. The global multiscale bases functions, obtained from the local ones by the weighted prolongation $\tilde{\mathcal{R}}_i^T$, inherit these approximation properties. We will illustrate it on the case with $d = 2$ for the sake of simplicity, even though our methodology can be extended to $d = 3$ naturally.

First we define the vector space over the boundary of each subdomain $\partial\Omega_i$. Fix the level parameter $\ell\in \mathbb{N}$, and let $\Gamma_{i}^j$ with $j=1,\cdots,m_i$ be a uniformly shape-regular decomposition of $\partial\Omega_i$ into affine edges/faces with no mutual intersection and with $m_i$ uniformly bounded independent of $h$ and $H$, i.e., $\cup_{j=1}^{m_i}\overline{\Gamma_{i}^j}=\partial\Omega_i$ and $\Gamma_{i}^j\cap \Gamma_{i}^{j'}=\emptyset$ if $j\neq j'$. 
Furthermore, we defined $E_{i,\ell}:=\{\psi_h\in C(\gamma_i):\psi_h|_{\Gamma_{i}^j}\in V_{\ell}(\Gamma_{i}^j)\}$, where $V_{\ell}(\Gamma_{i}^j)$ is the vector space spanned by hierarchical bases up to level $\ell$ on coarse edge $\Gamma_{i}^{j}$. We assume that $V_h(\Gamma_{i}^j)\coloneqq \{v|_{\Gamma_i^j}:v\in V_h\}$ is a refinement of $V_{\ell}(\Gamma_{i}^j)$. Let $\{\psi_{i,\ell}^k\}_{k=1}^{n_{i,\ell}}$ and 
$\{x_{i,\ell}^k\}_{k=1}^{n_{i,\ell}}$ be the nodal bases and the associated nodal points for $E_{i,\ell}$. Then we can represent the local edge space $E_{i,\ell}$ by  
\begin{align}\label{eq:local-edge}
E_{i,\ell} := \text{span} \big\{\psi_{i,\ell}^k : 1 \leq k \leq  n_{i,\ell}\big\}.
\end{align}
We depict in Figure \ref{fig:grid-points-local-new} the grid points $\{x_{i,\ell}^k\}_{k=1}^{n_{i,\ell}}$ of the hierarchical bases over $\partial\Omega_i$ for level parameter $\ell\in\{0,1,2\}$. 

\begin{figure}[htbp]
\centering
\begin{tikzpicture}[
    x=0.95cm,
    y=0.95cm,
    >=Latex,
    every node/.style={font=\small}
]

% -------------------------------------------------
% styles
% -------------------------------------------------
\tikzset{
  oldpt/.style={circle,fill=blue!70!black,inner sep=1.5pt},
  newpt/.style={circle,fill=red!80!black,inner sep=1.5pt},
  edgebox/.style={draw=black,very thick},
  myarrow/.style={->,thick}
}

% -------------------------------------------------
% Left: coarse subdomain Omega_i
% -------------------------------------------------
\begin{scope}[shift={(0,0)}]
    % fill and mesh
    \fill[green!12] (0,0) rectangle (4,4);
    \draw[step=0.5,gray!35,thin] (0,0) grid (4,4);
    \draw[edgebox] (0,0) rectangle (4,4);

    % vertices
    \foreach \P in {(0,0),(4,0),(4,4),(0,4)}
        \fill \P circle (1.4pt);

    % labels
    \node at (2,2) {$\Omega_i$};

    \node[below] at (2,0) {$\Gamma_i^1$};
    \node[right] at (4,2) {$\Gamma_i^2$};
    \node[above] at (2,4) {$\Gamma_i^3$};
    \node[left]  at (0,2) {$\Gamma_i^4$};

    % small label for boundary
    \node at (2,-0.75) {$\partial\Omega_i=\bigcup_{j=1}^4 \overline{\Gamma_i^j}$};
\end{scope}

% arrows from the left panel to the level panels
\draw[myarrow] (4.35,3.10) -- (6.00,5.65);
\draw[myarrow] (4.35,2.00) -- (6.00,2.55);
\draw[myarrow] (4.35,0.90) -- (6.00,-0.55);

% -------------------------------------------------
% Right top: ell = 0
% -------------------------------------------------
\begin{scope}[shift={(6.4,4.7)}]
    \draw[edgebox] (0,0) rectangle (2,2);
    \node[above] at (1,2.35) {$\ell=0$};

    % four corner nodes
    \foreach \P in {(0,0),(2,0),(2,2),(0,2)}
        \node[oldpt] at \P {};

    \node[right,align=left] at (2.4,1) {\scriptsize corner nodes only\\[-1mm]
    \scriptsize $n_{i,0}=4$};
\end{scope}

% -------------------------------------------------
% Right middle: ell = 1
% -------------------------------------------------
\begin{scope}[shift={(6.4,1.6)}]
    \draw[edgebox] (0,0) rectangle (2,2);
    \node[above] at (1,2.35) {$\ell=1$};

    % old points: corners
    \foreach \P in {(0,0),(2,0),(2,2),(0,2)}
        \node[oldpt] at \P {};

    % new points: edge midpoints
    \foreach \P in {(1,0),(2,1),(1,2),(0,1)}
        \node[newpt] at \P {};

    \node[right,align=left] at (2.4,1) {\scriptsize add edge midpoints\\[-1mm]
    \scriptsize $n_{i,1}=8$};
\end{scope}

% -------------------------------------------------
% Right bottom: ell = 2
% -------------------------------------------------
\begin{scope}[shift={(6.4,-1.5)}]
    \draw[edgebox] (0,0) rectangle (2,2);
    \node[above] at (1,2.35) {$\ell=2$};

    % old points: level-0 and level-1 points
    \foreach \P in {(0,0),(2,0),(2,2),(0,2),(1,0),(2,1),(1,2),(0,1)}
        \node[oldpt] at \P {};

    % new points: quarter points on each edge
    \foreach \P in {(0.5,0),(1.5,0),(2,0.5),(2,1.5),(1.5,2),(0.5,2),(0,1.5),(0,0.5)}
        \node[newpt] at \P {};

    \node[right,align=left] at (2.4,1) {\scriptsize add quarter points\\[-1mm]
    \scriptsize $n_{i,2}=16$};
\end{scope}

% -------------------------------------------------
% Legend
% -------------------------------------------------
\begin{scope}[shift={(0,-3.0)}]
    \node[oldpt] at (0,0) {};
    \node[right] at (0.25,0) {\scriptsize nodes inherited from lower levels};

    \node[newpt] at (4.7,0) {};
    \node[right] at (4.95,0) {\scriptsize nodes newly added at the displayed level};
\end{scope}

\end{tikzpicture}
\caption{Boundary hierarchical nodal points $\{x_{i,\ell}^k\}_{k=1}^{n_{i,\ell}}$ for the local edge space $E_{i,\ell}$ on $\partial\Omega_i$. 
The left panel shows a coarse subdomain $\Omega_i$ and its coarse-edge partition 
$\partial\Omega_i=\bigcup_{j=1}^4 \overline{\Gamma_i^j}$. 
The right panels display the boundary nodal points for levels $\ell=0,1,2$. 
Blue points are inherited from lower levels, while red points are newly added at the displayed level.}
\label{fig:grid-points-local-new}
\end{figure}
Next, we define the local multiscale space over each coarse subdomain $\Omega_i$ by 
\begin{align}\label{eq:local-multiscale}
\mathcal{L}^{-1}_{h,i}(E_{i,\ell})
:= \text{span} \left\{\mathcal{L}^{-1}_{h,i}(\psi_{i,\ell}^k): 1 \leq k \leq  n_{i,\ell}\right\},
\end{align}
where $\mathcal{L}^{-1}_{h,i} (\psi_{i,\ell}^k):=v\in V_{h,i}$ is the solution to the following local problem
\begin{equation}
\label{eq:Li}
  \left\{ \begin{aligned}
         a_i(v,w)&=0 &&\forall w\in V_{h,i}^0,\\
          v&=\psi_{i,\ell}^k &&\mbox{on }\partial\Omega_i.
  \end{aligned}\right.
\end{equation}
By the construction, the dimension of $\mathcal{L}^{-1}_{h,i} (E_{i,\ell})$ is at most $2^{\ell+2}$ if $\Gamma_i=\emptyset$. In practice, $n_{i,\ell}$ local problems are solved by the FEM to obtain the local multiscale space $\mathcal{L}^{-1}_{h,i}(E_{i,\ell})$, which can be carried out in parallel and thus has a scalable computational time. 

Finally, the edge multiscale ansatz space $V_{h,0}$ is defined by the weighted prolongation $\tilde{\mathcal{R}}_i^T$,
\begin{align}\label{eq:global-multiscale}
V_{h,0} := \text{span} \left\{\tilde{\mathcal{R}}_{i}^T\mathcal{L}^{-1}_{h,i}(\psi_{i,\ell}^k): \, 1 \leq i \leq N,\, 1 \leq k \leq  n_{i,\ell}\right\}.
\end{align}
The construction of $V_{h,0}$ is summarized in Algorithm \ref{algorithm:wavelet}.

\begin{algorithm}[!t]
\caption{The construction of the edge multiscale space $V_{h,0}$.}
\label{algorithm:wavelet}
\begin{algorithmic}[1]
\Require The level parameter $\ell\in \mathbb{N}$; subdomain $\Omega_i$ and its coarse edges $\Gamma_i^j$, $j=1,\ldots,m_i$; the subspace $E_{i,\ell}^j\subset L^2(\Gamma_i^j)$ up to level $\ell$ on each coarse edge $\Gamma_i^j$.
\Ensure The global edge multiscale space $V_{h,0}$.
\State Construct the local edge space $E_{i,\ell}$ by \eqref{eq:local-edge}.
\State Compute the local multiscale space $\mathcal{L}_{h,i}^{-1}(E_{i,\ell})$ by \eqref{eq:local-multiscale}.
\State Construct the coarse space $V_{h,0}$ by \eqref{eq:global-multiscale}.
\end{algorithmic}
\end{algorithm}

\begin{lemma}[Well-posedness of the local problems \eqref{eq:Li}]\label{lem:loc-coercive}
Under Assumption \ref{ass:resolution}, the sesquilinear form $a_i(\cdot,\cdot):V(\Omega_i)\times V(\Omega_i)\to\mathbb{C}$ is coercive, i.e., for all $w_i\in V(\Omega_i)$ with $w_i=0$ on $\partial\Omega_i$, there holds
\begin{align*}
\Re a_i(w_i,w_i)
\geq \frac{1}{2\Ce}\norm{w_i}_{V(\Omega_i)}^2.
\end{align*}
Here, $\Ce$ is defined in \eqref{eq:cest}.
\end{lemma}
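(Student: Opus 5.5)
For $w_i$ vanishing on $\partial\Omega_i$ the impedance boundary term in $a_i(w_i,w_i)$ disappears, because the trace of $w_i$ is zero. Hence
\begin{align*}
\Re a_i(w_i,w_i)=\|\nabla w_i\|_{L^2(\Omega_i)}^2-k^2\|w_i\|_{L^2(\Omega_i)}^2 .
\end{align*}
Since $w_i\in H^1_0(\Omega_i)$, the definition of $\Cpoin{\Omega_i}$ in Assumption \ref{ass:resolution} gives the Poincar\'e bound
\begin{align*}
k^2\|w_i\|_{L^2(\Omega_i)}^2\le \Cpoin{\Omega_i}(kd_i)^2\|\nabla w_i\|_{L^2(\Omega_i)}^2\le (1-\Ce^{-1})\|\nabla w_i\|_{L^2(\Omega_i)}^2,
\end{align*}
where the second inequality uses the definition \eqref{eq:cest} of $\Ce$, which says $\max_j \Cpoin{\Omega_j}(d_jk)^2=1-\Ce^{-1}$. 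Assumption \ref{ass:resolution} guarantees that this quantity is strictly less than $1$, so $\Ce$ is finite and positive.

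From this we obtain the gradient estimate
\begin{align*}
\Re a_i(w_i,w_i)\ge \Ce^{-1}\|\nabla w_i\|_{L^2(\Omega_i)}^2 .
\end{align*}
It remains to control the full weighted norm $\|w_i\|_{V(\Omega_i)}^2=\|\nabla w_i\|^2+k^2\|w_i\|^2$. Using the Poincar\'e bound again,
\begin{align*}
k^2\|w_i\|^2\le(1-\Ce^{-1})\|\nabla w_i\|^2\le\|\nabla w_i\|^2 ,
\end{align*}
and therefore $\|w_i\|_{V(\Omega_i)}^2\le 2\|\nabla w_i\|_{L^2(\Omega_i)}^2$. Combining this with the gradient estimate yields
\begin{align*}
\Re a_i(w_i,w_i)\ge \frac{1}{2\Ce}\|w_i\|_{V(\Omega_i)}^2,
\end{align*}
which is the claimed inequality.

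The only point that needs care is that the Poincar\'e constant is normalized with the factor $d_i$ inside the gradient, so that it combines with $k^2$ to give exactly $(kd_i)^2$. One also has to note that $\Ce$ is defined through the maximum over all subdomains, so the bound holds uniformly in $i$.

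Coercivity then yields well-posedness of \eqref{eq:Li} by Lax--Milgram: lift the boundary datum $\psi_{i,\ell}^k$ to a discrete function in $V_{h,i}$ with that trace, and solve for the correction in $V_{h,i}^0$, where $a_i$ is coercive.
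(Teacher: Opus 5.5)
Your proposal is correct and matches the paper's proof essentially line for line. The $d_i$-scaled Poincar\'e inequality gives both $\Re a_i(w_i,w_i)\ge \Ce^{-1}\normL{\nabla w_i}{\Omega_i}^2$ and $\norm{w_i}_{V(\Omega_i)}^2\le 2\normL{\nabla w_i}{\Omega_i}^2$, and you combine them exactly as the paper does; your closing Lax--Milgram remark also supplies the well-posedness of \eqref{eq:Li}, which the lemma's title names but the paper leaves implicit.
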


\begin{proof}
For all $w_i\in V(\Omega_i)$ with $w_i=0$ on $\partial\Omega_i$, using the definition of the sesquilinear form $a_i(\cdot,\cdot)$ and Assumption \ref{ass:resolution}, we derive
\begin{align*}
\Re a_i(w_i,w_i)&=\normL{\nabla w_i}{\Omega_i}^2-k^2\normL{w_i}{\Omega_i}^2
\geq \left(1-k^2d_i^2\Cpoin{\Omega_i}\right)\normL{\nabla w_i}{\Omega_i}^2\\
&\geq \Ce^{-1}\normL{\nabla w_i}{\Omega_i}^2.
\end{align*}
Furthermore, the definition of $\|\cdot\|_{V(\Omega_i)}$ implies, 
\begin{align*}
\|w_i\|^2_{V(\Omega_i)}&=\normL{\nabla w_i}{\Omega_i}^2+k^2\normL{w_i}{\Omega_i}^2
\leq \left(1+k^2d_i^2\Cpoin{\Omega_i}\right)\normL{\nabla w_i}{\Omega_i}^2\\
&\leq 2\normL{\nabla w_i}{\Omega_i}^2.
\end{align*}
The combination of the previous two inequalities leads to the desired assertion.
\end{proof}
Next we give the coarse projection operator $\mathcal{P}_0$ of level $\ell$: $V_h\to V_{h,0}$. Since $V_{h,0}$ is generated by the local multiscale space $\mathcal{L}^{-1}_{h,i}(E_{i,\ell})$ by means of the weighted prolongation $\tilde{\mathcal{R}}_i^T$, we only need to define local projection operator $\mathcal{P}_{i,\ell}$ at level $\ell$: $L^2(\partial\Omega_i)\to \mathcal{L}^{-1}_{h,i}(E_{i,\ell})$. Then $\mathcal{P}_{i,\ell}: L^2(\partial\Omega_i)\to \mathcal{L}_{h,i}^{-1}(E_{i,\ell})$ can be defined by 
\begin{align}\label{eq:projectionEDGE}
\mathcal{P}_{i,\ell}v:=\mathcal{L}_{h,i}^{-1}(\mathcal{I}_{i,\ell}(v)).
\end{align}
Here, $\mathcal{I}_{i,\ell}:L^2(\partial\Omega_i)\to E_{i,\ell}$ denotes the $L^2$-projection. 

Note that $(\mathcal{P}_{i,\ell}v)|_{\partial\Omega_i}$ is the $L^2$-projection on $E_{i,\ell}$:
\begin{align*}
(\mathcal{P}_{i,\ell}v)|_{\partial\Omega_i}
=\mathcal{I}_{i,\ell}(v).
\end{align*}
Since any $v\in V_h$ can be represented by
$v=\sum_{i=1}^N \tilde{{R}}_i^T v|_{\Omega_i}$,
the coarse projection $\mathcal{P}_0$ of level $\ell$: $ V_h\to V_0$ can be defined in terms of the local projection \eqref{eq:projectionEDGE},
\begin{align}\label{eq:glo-proj}
\mathcal{P}_0(v):=\sum_{i=1}^N\tilde{{R}}_i^T \left(\mathcal{P}_{i,\ell}v|_{\partial\Omega_i}\right).
\end{align}
%The coarse prolongation $\mathcal{P}_0^T$ of level $\ell$: $ V_0\to V_h$ is the natural injection. 
 
\begin{comment}
$\mathcal{P}_0^T$ of level $\ell$: $ V'_0\to V'_h$ can be defined by means of the dual operator, 
\begin{align*}
(\mathcal{P}_0^T v_0)(v)=v_0(\mathcal{P}_0 v) \text{ for all }
v_0\in V_0\text{ and }v\in V_h.
\end{align*}
\end{comment}

\begin{comment}
\begin{remark}
Similar resolution assumption as Assumption \ref{ass:resolution} is commonly seen in the literature, e.g.,  \cite{MR2812565}. If we take the coarse scale mesh grid $H\ll k^{-1}$, then $\Ce\approx 1$.
\end{remark}
\end{comment}

\section{Convergence analysis}\label{sec:converge}
Now we prove that the error transfer operator $I-B_{\op{hybrid}}A$ is contractive under explicit conditions on $(k,h,\bar{d},\delta)$ in \eqref{eq:overlap-diameter} and \eqref{eq:fine-mesh-h}-\eqref{eq:wavelet-level-ell}. This is achieved as follows. First, we prove the approximation properties of local multiscale space $\mathcal{L}^{-1}_{h,i}(E_{i,\ell})$ in Lemma \ref{thm:proj} by the concept of very weak solution, its {\it a priori} estimate and conforming Galerkin approximation established in Section \ref{sec:very-weak}. 

Next, we establish the approximation property of the edge multiscale ansatz space $V_{h,0}$ to the target Helmholtz harmonic linear space $V^{\partial}$ and to the discrete solution of the adjoint problem \eqref{eqn:weakformDual} in Lemma \ref{thm:last-app-harmonic} and Lemma \ref{thm:last}, respectively. Finally, we prove that the error transfer operator $I-B_{\op{hybrid}}A$ is contractive using the classical Schatz argument which essentially relies on the G\aa rding's inequality \eqref{eq:garding} and the approximation property of $V_{h,0}$ to the solution of the adjoint problem \eqref{eqn:weakformDual}. Note that similar results on the continuous level have been established in \cite{fu2021wavelet}. The discrete approximation properties of $V_{h,0}$ and solvablity of the coarse solver $A_0$ represent a crucial novelty of this work.
\subsection{Approximation properties of local multiscale space $\mathcal{L}^{-1}_{h,i}(E_{i,\ell})$}\label{sec:very-weak}
We establish in this section a local \textit{a priori} estimate on a discrete level, to the aim of proving Lemma \ref{thm:proj}. Throughout this section, $\Omega_i$ is one coarse subdomain as defined in Section \ref{sec:hybrid-2} for all $i=1,2,\cdots,N$, and $g\in V_{h,i}|_{\partial\Omega_i}$.
\subsubsection{\textit{a priori} estimate on the PDE level}
Let $g\in V_{h,i}|_{\partial\Omega_i}$, and $v\in H^{1/2}(\Omega_i)$ be the solution to the following problem:
\begin{equation}\label{eq:pde-very}
\left\{
\begin{aligned}
\mathcal{L}_i v:=\Delta v+k^2 v&=0&&\mbox{in }\Omega_i\\
%\frac{\partial v}{\partial n}+\mathrm{i}kv&=0 &&\mbox{on }\Gamma_i\\
 v&=g &&\mbox{on }\partial\Omega_i.
\end{aligned}
\right.
\end{equation}
%Here, $\Gamma_i=\partial \Omega_i\cap\partial\Omega$ and $\gamma_i=\partial\Omega_i\backslash\partial\Omega$. 
We introduce the local test space
\begin{align}\label{eq:test-space}
X(\Omega_i):=\{z\in H^1_0(\Omega_i;\mathbb{C}):\mathcal{L}_i z\in L^2(\Omega_i;\mathbb{C})\},
\end{align}
which is endowed with the graph norm $\|\cdot\|_{X(\Omega_i)}$,
\begin{align*}
\forall z\in X(\Omega_i):\|z\|_{X(\Omega_i)}^2=\|\nabla  z\|_{L^2(\Omega_i)}^2+\|\mathcal{L}_i z\|_{L^2(\Omega_i)}^2.
\end{align*}
Then we propose the following weak formulation corresponding to Problem \eqref{eq:pde-very}:
seeking $v\in L^2(\Omega_i;\mathbb{C})$ such that
\begin{align}\label{eq:nonstd-variational}
\int_{\Omega_i}v\cdot\mathcal{L}_i  \bar{z}\dx=\int_{\partial\Omega_i}g\cdot\frac{\partial \bar{z}}{\partial n}\;\mathrm{d}s\quad\text{ for all }z\in X(\Omega_i).
\end{align}
Next, we prove the nonstandard variational formulation \eqref{eq:nonstd-variational} is well-posed. To this end, one has to first derive the $L^2(\gamma_i)$-estimate of the normal trace $\frac{\partial z}{\partial n}$ for any $z\in X(\Omega_i)$. 
\begin{lemma}\label{thm:pw-Regularity}
Let the Resolution Assumption \ref{ass:resolution} be valid. Let $w\in L^2(\Omega_i)$ and let $z(w)\in X(\Omega_i)$ satisfy
\begin{equation}\label{eq:pde-dual}
\left\{\begin{aligned}
\mathcal{L}_i z(w):=\Delta z(w)+k^2 z(w)&=w && \text{ in } \Omega_i\\
%\frac{\partial z(w)}{\partial n}-\mathrm{i}kz(w)&=0 &&\mbox{ on }\Gamma_i\\
%\frac{\partial z}{\partial n}&=0 &&\mbox{ on }\partial Q_1^{\epsilon}\cap \omega_i.\\
z(w)&=0 &&\text{ on }\partial\Omega_i.
\end{aligned}\right.
\end{equation}
Then there holds
\begin{align*}
\left\|\frac{\partial z(w)}{\partial n}\right\|_{L^2(\partial\Omega_i)}\leq
& \Cw {\bar{d}}^{1/2}\normL{w}{\Omega_i}.
%\|\kappa\frac{\partial z}{\partial n}\|_{L^2(\partial\omega_i\cap D_0)}&\lesssim \normL{f}{\omega_i}.
\end{align*}
Here, $\Cw$ is a generic positive constant independent of $(k,h,H)$, and its value may vary between equations.
\end{lemma}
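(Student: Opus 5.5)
We need to bound $\|\partial_n z\|_{L^2(\partial\Omega_i)}\lesssim \bar d^{1/2}\|w\|_{L^2(\Omega_i)}$ for the Dirichlet problem \eqref{eq:pde-dual}. The plan is to combine an energy estimate from the coercivity in Lemma \ref{lem:loc-coercive} with a Rellich-type identity, which trades the boundary normal derivative for volume terms. The factor $\bar d^{1/2}$ then comes from scaling.

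\textbf{Step 1: energy bound.} Testing \eqref{eq:pde-dual} with $\bar z$ gives $\Re a_i(z,z)=-\Re(w,z)_{\Omega_i}$. Lemma \ref{lem:loc-coercive} then yields $\|z\|_{V(\Omega_i)}^2\le 2\Ce|(w,z)|$. Combining this with the Poincar\'e inequality $\|z\|_{L^2}\le \Cpoinn{\Omega_i}{1/2}d_i\|\nabla z\|_{L^2}$, we obtain
\[
\|\nabla z\|_{L^2(\Omega_i)}\lesssim \Ce d_i\|w\|_{L^2(\Omega_i)},
\qquad
k\|z\|_{L^2(\Omega_i)}\le \|\nabla z\|_{L^2(\Omega_i)},
\]
where the second bound uses $\Cpoinn{\Omega_i}{1/2}kd_i<1$. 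Since $\Ce$ is treated as bounded under Assumption \ref{ass:resolution}, these estimates are uniform in $k$.

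\textbf{Step 2: Rellich identity.} Since $\Omega_i$ is a Lipschitz polygon/polyhedron similar in shape to $K_i$, I would assume, as is implicit in the paper, that it is star-shaped with respect to a ball $B(x_i,\rho d_i)$ with $\rho$ uniform in $i$. Multiply $\Delta z+k^2z=w$ by $(x-x_i)\cdot\nabla\bar z$, take real parts and integrate by parts. Because $z=0$ on $\partial\Omega_i$, we have $\nabla z=\partial_n z\,n$ there, and the standard Rellich/Morawetz identity gives
\[
\frac12\int_{\partial\Omega_i}(x-x_i)\cdot n\,|\partial_n z|^2\ds
=\frac{d-2}{2}\|\nabla z\|^2-\frac{d}{2}k^2\|z\|^2+\Re\int_{\Omega_i} w\,(x-x_i)\cdot\nabla\bar z\dx .
\]
The $k^2|z|^2$ boundary term vanishes because $z=0$ there. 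Star-shapedness gives $(x-x_i)\cdot n\ge \rho d_i$ on $\partial\Omega_i$, and $|x-x_i|\le d_i$. Hence
\[
\rho d_i\|\partial_n z\|^2_{L^2(\partial\Omega_i)}\lesssim \|\nabla z\|^2+k^2\|z\|^2+d_i\|w\|\,\|\nabla z\|\lesssim d_i^2\|w\|^2 ,
\]
using Step 1. Dividing by $d_i$ gives $\|\partial_n z\|\lesssim d_i^{1/2}\|w\|\le \bar d^{1/2}\|w\|$.

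\textbf{Main obstacle: regularity.} The Rellich identity needs enough regularity to justify integration by parts, in particular $\partial_n z\in L^2(\partial\Omega_i)$. For a general $z\in X(\Omega_i)$ this is not automatic. The route I would take is as follows.
\begin{itemize}
\item Invoke the known result that on Lipschitz (in particular polygonal) domains, the Dirichlet Laplacian with $L^2$ data has $\partial_n z\in L^2$. This follows from Jerison–Kenig or Ne\v{c}as via Rellich on Lipschitz domains.
\item Alternatively, if $\Omega_i$ is convex, use the convex-domain result $z\in H^2(\Omega_i)$.
\item Alternatively, approximate $\Omega_i$ by smooth star-shaped domains, where the identity is classical, and pass to the limit using the uniform constants.
\end{itemize}
Because every constant in the identity depends only on $\rho$ and scaling, the limit argument preserves the estimate.
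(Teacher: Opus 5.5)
Your argument is correct, but it takes a different route from the paper.

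\textbf{How the paper argues.} After the same energy/Poincar\'e step as your Step 1, which gives $\|\nabla z\|_{L^2(\Omega_i)}\lesssim \Ce d_i\|w\|_{L^2(\Omega_i)}$ and $\|z\|_{L^2(\Omega_i)}\lesssim \Ce d_i^2\|w\|_{L^2(\Omega_i)}$, the paper does not use a multiplier identity. It bounds $\|\Delta z\|_{L^2(\Omega_i)}\le k^2\|z\|_{L^2(\Omega_i)}+\|w\|_{L^2(\Omega_i)}\lesssim(\Ce+1)\|w\|_{L^2(\Omega_i)}$. It then invokes $H^2$-regularity of the Dirichlet Laplacian on the convex subdomain $\Omega_i$ to get \eqref{eq:H2-test}. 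Next it interpolates between the $H^1$ and $H^2$ bounds to obtain an $H^{3/2+\eta}$ bound of order $d_i^{1/2-\eta}$. It finishes with the trace theorem applied to $\nabla z$.

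\textbf{How your route differs.} Your Rellich identity with multiplier $(x-x_i)\cdot\nabla\bar z$ replaces the interpolation-plus-trace step. It converts $\|\partial_n z\|_{L^2(\partial\Omega_i)}^2$ directly into the volume quantities already controlled in Step 1. The bound $(x-x_i)\cdot n\ge\rho d_i$ then yields exactly the factor $d_i^{1/2}$, with a constant depending only on $\rho$ and $\Ce$.

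\textbf{What each route buys.} Your route gives transparent, scale-invariant constants. The paper applies interpolation and trace inequalities on $\Omega_i$ itself without tracking how their constants depend on $d_i$. So getting exactly $d_i^{1/2}$, rather than $d_i^{1/2-\eta}$ with unspecified $d_i$-dependence, implicitly requires rescaling to a reference domain. The paper's route, in turn, produces the $H^2$ estimate \eqref{eq:H2-test} as a by-product. That estimate is reused later for the Galerkin error bound \eqref{eq:dual-local-err}; your proof does not supply it, though it is not needed for this lemma.

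\textbf{Regularity.} Your second option (convexity, hence $z\in H^2(\Omega_i)\cap H^1_0(\Omega_i)$) is exactly the paper's hypothesis. It suffices to justify the integration by parts. A convex, shape-regular $\Omega_i$ is automatically star-shaped with respect to a ball of radius comparable to $d_i$, so your geometric assumption is then satisfied too.

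\textbf{A sign slip.} The signs of the first two volume terms in your identity are reversed. Multiplying $\Delta z+k^2z=w$ by $(x-x_i)\cdot\nabla\bar z$ actually gives $\frac12\int_{\partial\Omega_i}(x-x_i)\cdot n\,|\partial_n z|^2\ds=\frac{2-d}{2}\|\nabla z\|^2+\frac{d}{2}k^2\|z\|^2+\Re\int_{\Omega_i}w\,(x-x_i)\cdot\nabla\bar z\dx$. A one-dimensional check confirms this. Since you immediately bound every term in absolute value, the slip does not affect your estimate.
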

\begin{proof}
This proof follows from \cite{MR4860976}.
For the sake of simplicity, we denote $z(w)$ by $z$ in the proof.
An application of the Poincar\'{e} inequality implies
\begin{align}\label{eq:111}
\normL{z}{\Omega_i}\leq \Cpoinn{\Omega_i}{1/2}d_i\normL{\nabla z}{\Omega_i}.
\end{align}
Testing \eqref{eq:pde-dual} with $z$ and applying integration by parts, we arrive at
\begin{align*}
    \|\nabla z\|_{L^2(\Omega_i)}^2&= k^2\|z\|_{L^2(\Omega_i)}^2-\int_{\Omega_i}w\cdot\bar{z}\dx.
\end{align*}
Then an application of the Cauchy-Schwarz inequality leads to
\begin{align*}
    \|\nabla z\|_{L^2(\Omega_i)}^2&\leq k^2\|z\|_{L^2(\Omega_i)}^2+\|w\|_{L^2(\Omega_i)}\|z\|_{L^2(\Omega_i)}.
\end{align*}
Applying the Poincar\'{e} inequality \eqref{eq:111}, we obtain
\begin{align*}
\normL{\nabla z}{\Omega_i}^2\leq \Cpoin{\Omega_i}(d_ik)^2\normL{\nabla z}{\Omega_i}^2+\Cpoinn{\Omega_i}{1/2}d_i\normL{w}{\Omega_i}
\normL{\nabla z}{\Omega_i}.
\end{align*}
Thanks to the Resolution Assumption \eqref{ass:resolution}, this yields
\begin{align}\label{eq:gradient}
\normL{\nabla z}{\Omega_i}\leq\Ce\Cpoinn{\Omega_i}{1/2}d_i \normL{w}{\Omega_i}.
\end{align}
Furthermore, by \eqref{eq:111}, we arrive at
\begin{align}\label{eq:l2}
\normL{ z}{\Omega_i}\leq \Ce\Cpoin{\Omega_i}d_i^2\normL{w}{\Omega_i}.
\end{align}
Note that taking the $L^2(\Omega_i)$-norm on the first equation in \eqref{eq:pde-dual}, together with an application of the triangle inequality, implies
\begin{align*}
\normL{\Delta z}{\Omega_i}\leq k^2\normL{z}{\Omega_i}+\normL{w}{\Omega_i}.
\end{align*}
This, together with \eqref{eq:l2} and the Resolution Assumption \eqref{ass:resolution}, yields
\[
\normL{\Delta z}{\Omega_i}\leq \Big(\Ce +1\Big)\normL{w}{\Omega_i}.
\]
Note that the interface $\gamma_i$ has sufficient smoothness, then the $H^2$ regularity of the Laplacian on a convex domain \cite[Section 3.2]{MR775683} implies the following {\it a priori} estimate
\begin{align}\label{eq:H2-test}
\|z\|_{H^2(\Omega_i)}\leq \normL{\Delta z}{\Omega_i}\leq \Big(\Ce +1\Big)\normL{w}{\Omega_i}.
\end{align}
This, together with \eqref{eq:gradient} and applying interpolation between $H^1(\Omega_i)$ and $H^2(\Omega_i)$ yields the $H^{3/2+\eta}(\Omega_i)$ regularity estimate for $\eta\in (0,1/2)$, 
\begin{align}\label{eq:h3/2}
\normHp{z}{\Omega_i}{3/2+\eta}\lesssim(\Ce+1)\Cpoinn{\Omega_i}{1/4-\eta/2}d_i^{1/2-\eta}\normL{w}{\Omega_i}.
\end{align}
Since differentiation is continuous from $H^{3/2+\eta}(\Omega_i)$ to $H^{1/2+\eta}(\Omega_i)$, by the trace theorem, we have
\begin{align*}
\|\frac{\partial z}{\partial n}\|_{L^2(\partial\Omega_i)}&\lesssim \|\frac{\partial z}{\partial n}\|_{H^{\eta}(\partial\Omega_i)}\lesssim \|\nabla z\|_{H^{1/2+\eta}(\Omega_i)}\lesssim \normHp{z}{\Omega_i}{3/2+\eta},
\end{align*}
which, together with \eqref{eq:h3/2}, the Resolution Assumption \ref{ass:resolution} and taking $\eta$ sufficiently close to 0, proves the desired assertion.
\end{proof}
Thanks to this \textit{a priori} estimate presented in Theorem \ref{thm:pw-Regularity}, we are ready to state the well-posedness of the nonstandard variational formulation \eqref{eq:nonstd-variational}. To begin, we introduce a sesquilinear form $c_i(\cdot,\cdot)$ on $L^2(\Omega_i;\mathbb{C})\times L^2(\Omega_i;\mathbb{C})$ and a linear form $b_i(\cdot)$ on $L^2(\Omega_i;\mathbb{C})$, defined by
\begin{equation}\label{eq:linear-form-local}
    \begin{aligned}       c_i(w_1,w_2)&:=\int_{\Omega_i}w_1\cdot\overline{w}_2\dx&&\text{for all }w_1,w_2\in L^2(\Omega_i)\\     b_i(w)&:=\int_{\partial\Omega_i}g\cdot\frac{\partial \overline{z(w)}}{\partial n}\;\mathrm{d}s&& \text{for all }w\in L^2(\Omega_i),
    \end{aligned}
    \end{equation}
with $z(w)$ being the unique solution to \eqref{eq:pde-dual}. Then the nonstandard variational form \eqref{eq:nonstd-variational} is equivalent to seeking $v\in L^2(\Omega_i)$ such that
\begin{equation}\label{eq:equivalence}
c_i(v,w)=b_i(w)\quad\text{for all }w\in L^2(\Omega_i).
\end{equation}
Note that testing \eqref{eq:pde-dual} with $\xi\in V(\Omega_i)$, we obtain for all $w\in L^2(\Omega_i;\mathbb{C})$, there holds
\begin{align}\label{eq:normtrace-cont}
\left\langle \xi|_{\partial\Omega_i},\frac{\partial z(w)}{\partial n}\right\rangle_{\partial\Omega_i}=
c_i(\xi,w)+a_i(\xi,z(w)), \qquad \forall \xi\in V(\Omega_i).
\end{align}
Here, $\langle \cdot,\cdot\rangle_{\partial\Omega_i}$ denotes the $L^2(\partial\Omega_i;\mathbb{C})$ inner product.
\begin{theorem}\label{thm a2}
Given $g\in V_{h,i}|_{\partial\Omega_i}$, let v be the solution to problem \eqref{eq:nonstd-variational} and let the test space $X(\Omega_i)$ be defined in \eqref{eq:test-space}. Then the nonstandard variational form \eqref{eq:nonstd-variational} is well-posed. Moreover, there holds,
\begin{equation*}
        \|v\|_{L^2(\Omega_i)}\leq \Cw d_i^{1/2}\|g\|_{L^2(\partial\Omega_i)}.
    \end{equation*}
\end{theorem}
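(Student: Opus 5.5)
The plan is to use the reformulation \eqref{eq:equivalence} and the Riesz representation theorem on the Hilbert space $L^2(\Omega_i;\mathbb{C})$. The sesquilinear form $c_i(\cdot,\cdot)$ is just the $L^2(\Omega_i)$ inner product, so it is trivially bounded and coercive. Thus \eqref{eq:equivalence}, and hence \eqref{eq:nonstd-variational}, is well-posed as soon as $b_i$ is a bounded conjugate-linear functional on $L^2(\Omega_i)$. The norm of the solution $v$ then equals the norm of $b_i$.

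First I will check that the two formulations really coincide. For each $w\in L^2(\Omega_i)$, Lemma \ref{thm:pw-Regularity} together with the coercivity of Lemma \ref{lem:loc-coercive} (Lax--Milgram on $H^1_0(\Omega_i)$) gives a unique $z(w)\in H^1_0(\Omega_i)$ with $\mathcal{L}_i z(w)=w$, so $z(w)\in X(\Omega_i)$. Conversely, every $z\in X(\Omega_i)$ equals $z(\mathcal{L}_i z)$ by this uniqueness. Hence the map $w\mapsto z(w)$ is a bijection $L^2(\Omega_i)\to X(\Omega_i)$. It is linear, and since $\mathcal{L}_i$ has real coefficients, $\overline{z(w)}=z(\overline w)$. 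Testing \eqref{eq:nonstd-variational} with $z=z(w)$ therefore turns it exactly into $c_i(v,w)=b_i(w)$ for all $w$, and $b_i$ is conjugate-linear in $w$.

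Next comes boundedness. By Cauchy--Schwarz on $\partial\Omega_i$ and Lemma \ref{thm:pw-Regularity},
\[
|b_i(w)|\le \|g\|_{L^2(\partial\Omega_i)}\left\|\frac{\partial z(w)}{\partial n}\right\|_{L^2(\partial\Omega_i)}\le \Cw \bar d^{1/2}\|g\|_{L^2(\partial\Omega_i)}\|w\|_{L^2(\Omega_i)}.
\]
Riesz then yields a unique $v\in L^2(\Omega_i)$. Taking $w=v$ gives $\|v\|_{L^2(\Omega_i)}^2=b_i(v)$, which produces the stated estimate.

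The only delicate point is the factor $d_i^{1/2}$ rather than $\bar d^{1/2}$. Inspecting the proof of Lemma \ref{thm:pw-Regularity}, the bound \eqref{eq:h3/2} carries the local factor $d_i^{1/2-\eta}$. I will rerun that step with $d_i$ in place of $\bar d$ and absorb the residual $d_i^{-\eta}$ into $\Cw$, using the resolution assumption and $\eta\to0$ exactly as that lemma does. With this, the final constant is $\Cw d_i^{1/2}$.

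I expect the main obstacle to be the equivalence step: justifying that $z(w)$ has an $L^2$ normal trace, so that $b_i$ is defined at all. This is already supplied by the $H^{3/2+\eta}$ regularity in Lemma \ref{thm:pw-Regularity}, so beyond it the argument is routine.
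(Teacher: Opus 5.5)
Your argument is the paper's: boundedness of $b_i$ from Lemma~\ref{thm:pw-Regularity}, the equivalence of \eqref{eq:nonstd-variational} with \eqref{eq:equivalence}, Riesz for the $L^2$ inner product, and testing with $w=v$. Your bijection $w\mapsto z(w)$ from $L^2(\Omega_i)$ onto $X(\Omega_i)$ spells out something the paper leaves implicit.

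The one step that does not work as described is upgrading $\bar d^{1/2}$ to $d_i^{1/2}$ by absorbing $d_i^{-\eta}$ into $\Cw$. Assumption~\ref{ass:resolution} bounds $d_i$ from above, not from below, so $d_i^{-\eta}$ is unbounded; under \eqref{eq:wavelet-level-H} it grows like $k^{\eta}$. Nor can you let $\eta\to0$ uniformly, because the trace constant on $H^{1/2+\eta}$ blows up. The last line of the proof of Lemma~\ref{thm:pw-Regularity} has the same weakness, and the paper's proof of this theorem simply writes $d_i^{1/2}$.

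A clean replacement avoids $\eta$ entirely. Apply the scaled trace inequality to $\nabla z$, with $z=z(w)$, on the shape-regular domain $\Omega_i$ of diameter $d_i$. Combine it with \eqref{eq:gradient} and the scale-invariant convex-domain bound $|z|_{H^2(\Omega_i)}\le\normL{\Delta z}{\Omega_i}$ (the seminorm form of \eqref{eq:H2-test}):
\[
\left\|\frac{\partial z}{\partial n}\right\|_{L^2(\partial\Omega_i)}^2
\lesssim d_i^{-1}\normL{\nabla z}{\Omega_i}^2+d_i\,|z|_{H^2(\Omega_i)}^2
\lesssim d_i\normL{w}{\Omega_i}^2 .
\]
This gives $|b_i(w)|\lesssim d_i^{1/2}\normL{g}{\partial\Omega_i}\normL{w}{\Omega_i}$ directly, and the rest of your proof goes through unchanged.
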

\begin{proof}
    To prove the well-posedness of the nonstandard variational form \eqref{eq:nonstd-variational},  we only need to show the boundedness of the linear form $b(\cdot)$, which follows from Theorem \ref{thm:pw-Regularity}, 
\begin{align}\label{eq: bounded b}
|b(w)|\leq \Cw d_i^{1/2}\|g\|_{L^2(\partial\Omega_i)}\normL{w}{\Omega_i}.
\end{align}
Then the equivalence of problems \eqref{eq:equivalence} and \eqref{eq:nonstd-variational} implies the desired well-posedness of \eqref{eq:nonstd-variational}. 

Next, given $w\in L^2(\Omega_i)$, we obtain from \eqref{eq: bounded b} and \eqref{eq:equivalence}:
    \begin{align*}       \int_{\Omega_i}v\cdot\overline{w}\dx=c(v,w)&=b(w)\\
    &\leq \Cw d_i^{1/2}\|g\|_{L^2(\partial\Omega_i)}\normL{w}{\Omega_i}.
    \end{align*}
    Then this completes the proof by taking $w:=v$.
\end{proof}

\subsubsection{Local \textit{a priori} estimate on a discrete level}
In this section, we first propose the finite element approximation to Problem \eqref{eq:pde-very} by means of the conforming nonstandard variational formulation \eqref{eq:nonstd-variational} inspired by \cite{MR2084239}, which provides a finite element approximation to the very weak solution for Poisson problems. Then we analyze its $L^2$-stability.

Given $w\in L^2(\Omega_i;\mathbb{C})$. First, we seek $z_{h,i}(w)\in V_{h,i}^0$, which corresponds to conforming Galerkin approximation to \eqref{eq:pde-dual}, satisfying
\begin{align}\label{eq:very-weak-discrete}
a_i(\xi_{h,i},z_{h,i}(w))=-c_i(\xi_{h,i},w), \quad \forall \xi_{h,i}\in V^0_{h,i}.
\end{align}
Let Assumption \ref{ass:resolution} hold, then a combination of coerciveness of $a_i(\cdot,\cdot)$ as in Lemma \ref{lem:loc-coercive}, boundedness of $a_i(\cdot,\cdot)$ \eqref{eq:sesqui-bound-local}, the a priori estimate \eqref{eq:H2-test} and the stability of  $\Pi_{h,i}$ \eqref{eq:nodal-interp}, implies this conforming Galerkin approximation error, 
\begin{align}\label{eq:dual-local-err}
\norm{z(w)- z_{h,i}(w)}_{V(\Omega_i)}\leq 2\Ce(1+\Ce)\Const{I}\Const{bdd}(kd_i)^{-1}h\normL{w}{\Omega_i}.
\end{align}
Next, we introduce $\lambda_{h,i}(w)\in V_{h,i}|_{\partial\Omega_i}$ to approximate $\frac{\partial z(w)}{\partial n}$, s.t., 
\begin{align}\label{eq:discrete-flux}
\langle \xi_{h,i}|_{\gamma_i},\lambda _{h,i}(w)\rangle_{\partial\Omega_i}=
c_i(\xi_{h,i},w)+a_i(\xi_{h,i},z_{h,i}(w)), \quad \forall \xi_{h,i}\in V_{h,i}.
\end{align}
We then introduce a linear from $b_{h,i}: L^2(\Omega_i;\mathbb{C})\to \mathbb{C}$ to approximate $b_i(\cdot)$, defined by
\begin{align}\label{eq:linear-discrete}
b_{h,i}(w)=\int_{\partial\Omega_i}g\overline{\lambda_{h,i}(w)}\ds.
\end{align}
Next, we estimate the error between $b_i(\cdot)$ and its approximation $b_{h,i}(\cdot)$, which is based on the stability of discrete extension for $g$. 
\begin{lemma}[Discrete extension of $g$] \label{lem:discrete-extension}
Let Assumption \ref{ass:resolution} and \eqref{eq:fine-mesh-cond} hold. For $g\in V_{h,i}|_{\partial\Omega_i}$, let $\widehat{v}_{h,i}\in V_{h,i}$ satisfy
\begin{equation}\label{eq:pde-approx-stab}
\left\{
\begin{aligned}
a_i( \widehat{v}_{h,i},w_i)&=0\quad&&\forall w_i\in V_{h,i}^0\\
 \widehat{v}_{h,i}&=g&&\text{on }\partial\Omega_i.
\end{aligned}
\right.
\end{equation}
Then there holds 
\begin{align*}
\norm{\widehat{v}_{h,i}}_{V(\Omega_i)}&\leq \Const{weak}(kd_i)^{-1}h^{-1/2}\| g\|_{L^2( \partial\Omega_i)}.
\end{align*}
Here, %the positive constants $\Ce$ and $\Const{bdd}$ are defined in \eqref{eq:cest} and \eqref{eq:sesqui-bound}, and 
$\Const{weak}$ is a positive constant independent of $(k,h,\bar{d},\delta)$. 
\end{lemma}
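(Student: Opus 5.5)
The plan is the standard lift-and-correct argument. Let $g_h\in V_{h,i}$ denote the discrete extension of $g$ that agrees with $g$ at the boundary nodes of $\partial\Omega_i$ and vanishes at all interior nodes of $\Omega_i$. Then $g_h$ is supported in the layer of fine elements touching $\partial\Omega_i$. We write
\[
\widehat{v}_{h,i}=g_h+w_h,
\]
where $w_h\in V_{h,i}^0$ solves
\[
a_i(w_h,\xi)=-a_i(g_h,\xi)\qquad\forall \xi\in V_{h,i}^0 .
\]
By Lemma \ref{lem:loc-coercive}, Assumption \ref{ass:resolution} makes $a_i$ coercive on $V_{h,i}^0$. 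Hence $w_h$ exists and is unique, and so is $\widehat{v}_{h,i}$.

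The first step bounds the lift. An inverse/scaling argument on the boundary layer of elements, using quasi-uniformity of $\mathcal{T}_h$, gives
\[
\normL{g_h}{\Omega_i}\lesssim h^{1/2}\|g\|_{L^2(\partial\Omega_i)},\qquad
\normL{\nabla g_h}{\Omega_i}\lesssim h^{-1/2}\|g\|_{L^2(\partial\Omega_i)} .
\]
Since $kh<1$, this yields
\[
\|g_h\|_{V(\Omega_i)}\lesssim h^{-1/2}\|g\|_{L^2(\partial\Omega_i)} .
\]

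The second step bounds the correction. Test the equation for $w_h$ with $\xi=w_h$ and use coercivity $\Re a_i(w_h,w_h)\ge (2\Ce)^{-1}\|w_h\|^2_{V(\Omega_i)}$. The boundary term of $a_i$ vanishes because $w_h=0$ on $\partial\Omega_i$. Bound the right-hand side by the local continuity estimate \eqref{eq:sesqui-bound-local}:
\[
|a_i(g_h,w_h)|\le \Const{bdd}\bigl(1+(kd_i)^{-1}\bigr)\|g_h\|_{V(\Omega_i)}\|w_h\|_{V(\Omega_i)} .
\]
Because $kd_i<1$ under Assumption \ref{ass:resolution} (with $\Cpoin{\Omega_i}$ a fixed shape constant), we have $1+(kd_i)^{-1}\le 2(kd_i)^{-1}$. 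Therefore
\[
\|w_h\|_{V(\Omega_i)}\le 4\Ce\Const{bdd}(kd_i)^{-1}\|g_h\|_{V(\Omega_i)} .
\]
Adding the lift, and absorbing $1\le (kd_i)^{-1}$, gives the claimed bound $\Const{weak}(kd_i)^{-1}h^{-1/2}\|g\|_{L^2(\partial\Omega_i)}$ with $\Const{weak}\simeq \Ce\Const{bdd}$ times the shape-regularity constant.

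A sharper route avoids the crude continuity constant. Since $\nabla g_h$ and $g_h$ live only on the $h$-layer, one can bound $a_i(g_h,w_h)$ directly by $\|g_h\|_{V}\|w_h\|_{V}$ with no $(kd_i)^{-1}$ factor. That refinement is not needed for the stated estimate, so I would not pursue it.

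The main obstacle is the first step: making the trace-lift estimate rigorous and uniform. The boundary-layer elements must have a face on $\partial\Omega_i$ or touch it. The $L^2(\partial\Omega_i)$ norm of a piecewise linear trace must control the nodal values, uniformly via quasi-uniformity and the finite number $m_i$ of flat faces $\Gamma_i^j$. Corners of $\Omega_i$, where elements touch $\partial\Omega_i$ only at a vertex, need separate treatment: each such element's nodal boundary value is controlled by the adjacent boundary faces through shape regularity. Once that elementwise scaling holds, the rest is a routine coercivity argument, and the mesh condition \eqref{eq:fine-mesh-cond} enters only through $kh<1$.
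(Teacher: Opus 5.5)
Your proposal is correct and is essentially the paper's proof. The paper also splits $\widehat{v}_{h,i}$ into the nodal boundary lift plus a $V_{h,i}^0$ correction. It bounds the correction via Lemma \ref{lem:loc-coercive} and \eqref{eq:sesqui-bound-local}, then controls the lift by an inverse estimate, the equivalence of $\|\cdot\|_{L^2(\partial\Omega_i)}$ with the discrete nodal boundary norm, and $kh<1$.

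One minor remark: Assumption \ref{ass:resolution} only gives $kd_i<\Cpoin{\Omega_i}^{-1/2}$, not $kd_i<1$, so $1+(kd_i)^{-1}\lesssim (kd_i)^{-1}$ holds only with a shape-dependent constant; this is harmless. Also, your ``sharper route'' works for a simpler reason than the $h$-layer support: the impedance term in $a_i(g_h,w_h)$ vanishes because $w_h\in V_{h,i}^0$, so the $(kd_i)^{-1}$ factor can be dropped entirely.
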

\begin{proof}
Since $\widehat{v}_{h,i}\in V_{h,i}$ is a finite element function, we can represent it using the nodal basis functions. Let $\mathcal{N}(\Bar{\Omega}_i)$ be the set consisting of all finite element nodes on $\Bar{\Omega}_i$ with $\mathcal{N}^{\partial}_i\subset\mathcal{N}(\Bar{\Omega}_i)$ consisting of those located on ${\partial\Omega_i}$. 
    Since $\widehat{v}_{h,i}\in V_{h,i}$, we can decompose it into $\widehat{v}_{h,i}:=\widehat{v}_{h,i}^0+\widehat{v}_{h,i}^{\partial}$, where $\widehat{v}_{h,i}^0(x)=0$ for all $x\in\mathcal{N}^{\partial}_i$ and $\widehat{v}_{h,i}^{\partial}(x)=0$ for all $x\in \mathcal{N}(\Bar{\Omega}_i)\backslash\mathcal{N}^{\partial}_i$.

First, we derive an estimate of $\Vert \widehat{v}_{h,i}\Vert_{V(\Omega_i)}$ by $\Vert \widehat{v}_{h,i}^{\partial}\Vert_{V(\Omega_i)}$. By Lemma \ref{lem:loc-coercive}, the boundedness of $a_i(\cdot,\cdot)$ \eqref{eq:sesqui-bound-local} and equation \eqref{eq:pde-approx-stab}, we have
\begin{equation*}
\begin{aligned}
\Vert \widehat{v}_{h,i}-\widehat{v}_{h,i}^{\partial}\Vert_{V(\Omega_i)}^2
&\leq 2\Ce \Re a_i(\widehat{v}_{h,i}-\widehat{v}_{h,i}^{\partial},\widehat{v}_{h,i}-\widehat{v}_{h,i}^{\partial})\\
&=-2\Ce \Re a_i(\widehat{v}_{h,i}^{\partial},\widehat{v}_{h,i}-\widehat{v}_{h,i}^{\partial})\\
&\leq 2\Ce{\rm C_{bdd}}(1+(kd_i)^{-1})\Vert \widehat{v}_{h,i}^{\partial}\Vert_{V(\Omega_i)}\Vert \widehat{v}_{h,i}-\widehat{v}_{h,i}^{\partial}\Vert_{V(\Omega_i)},
\end{aligned}
\end{equation*}
which implies 
\begin{align*}
\Vert \widehat{v}_{h,i}-\widehat{v}_{h,i}^{\partial}\Vert_{V(\Omega_i)}
\leq 2\Ce{\rm C_{bdd}}(1+(kd_i)^{-1})\Vert \widehat{v}_{h,i}^{\partial}\Vert_{V(\Omega_i)}.
\end{align*}
Then an application of the triangle inequality leads to 
\begin{equation}\label{part 4}
    \Vert \widehat{v}_{h,i}\Vert_{V(\Omega_i)}\leq (1+2\Ce{\rm C_{bdd}}(1+(kd_i)^{-1}))\Vert \widehat{v}_{h,i}^{\partial}\Vert_{V(\Omega_i)}.
\end{equation}
Next, we estimate $ \Vert \widehat{v}_{h,i}^{\partial}\Vert_{V(\Omega_i)}$. Using the definition of $k$-weighted norm, inverse estimate of the finite element functions and \cite[equation (2.5)]{MR4890903}, and noting that $\widehat{v}_{h,i}^{\partial}$ vanished on all nodes except for those on $\gamma_i$ and $hk\ll 1$ due to \eqref{eq:fine-mesh-cond}, we can obtain
    \begin{equation*}%\label{part 1}
        \begin{aligned}
    \Vert \widehat{v}_{h,i}^{\partial}\Vert_{V(\Omega_i)}^2&=\Vert\nabla \widehat{v}_{h,i}^{\partial}\Vert^2_{L^2(\Omega_i)}+k^2\Vert  \widehat{v}_{h,i}^{\partial}\Vert^2_{L^2(\Omega_i)}\\
    &\lesssim(h^{-2}+k^2)\Vert  \widehat{v}_{h,i}^{\partial}\Vert^2_{L^2(\Omega_i)}\\
    &\lesssim h^{d-2}\sum_{x\in \mathcal{N}^{\partial}_i}(\widehat{v}_{h,i}^{\partial}(x))^2.
    \end{aligned}
    \end{equation*}
Together with Assumption  \ref{ass:resolution} and the equivalence of the discrete $L^2$ norm
\begin{align*}
\Vert v_h\Vert_{L^2_h(\partial\Omega_i)}^2:=h^{d-1}\sum_{x\in\mathcal{N}_i^{\partial}}|v_h(x)|^2\sim \Vert v_h\Vert_{L^2(\partial\Omega_i)}^2,
\end{align*}    
this leads to 
    \begin{equation}\label{part 1}
        \begin{aligned}
        \Vert 
        \widehat{v}_{h,i}^{\partial}\Vert_{V(\Omega_i)}
    &\lesssim h^{-1/2}\Vert  \widehat{v}_{h,i}^{\partial}\Vert_{L^2(\partial\Omega_i)}\\
    &= h^{-1/2}\Vert  g\Vert_{L^2(\partial\Omega_i)}.
    \end{aligned}
    \end{equation}
Insert this estimate into \eqref{part 4}, this proves the desired assertion. 
\end{proof}
\begin{lemma}\label{lem:err-linear-form}
Let Assumption \ref{ass:resolution} and \eqref{eq:fine-mesh-cond} hold. Given $g\in V_{h,i}|_{\partial\Omega_i}$. There holds,
\begin{align*}
|b_i(w)-b_{h,i}(w)|\leq\Const{weak}(kd_i)^{-3}h^{1/2}\normL{g}{\partial\Omega_i}\normL{w}{\Omega_i},\quad\forall w\in L^2(\Omega_i;\mathbb{C}).
\end{align*}
\end{lemma}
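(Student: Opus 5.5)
The plan is to write both linear forms as volume expressions through the discrete extension $\widehat{v}_{h,i}$ of Lemma \ref{lem:discrete-extension}, and then use Galerkin orthogonality to reduce the difference to the dual error \eqref{eq:dual-local-err}.

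First, take $\xi=\widehat{v}_{h,i}\in V_{h,i}\subset V(\Omega_i)$ in the continuous identity \eqref{eq:normtrace-cont}. Since $\widehat{v}_{h,i}|_{\partial\Omega_i}=g$, this gives
\begin{align*}
b_i(w)=\int_{\partial\Omega_i} g\,\overline{\partial_n z(w)}\ds = c_i(\widehat{v}_{h,i},w)+a_i(\widehat{v}_{h,i},z(w)).
\end{align*}
Next, take $\xi_{h,i}=\widehat{v}_{h,i}$ in the discrete flux definition \eqref{eq:discrete-flux}. This gives
\begin{align*}
b_{h,i}(w)=c_i(\widehat{v}_{h,i},w)+a_i(\widehat{v}_{h,i},z_{h,i}(w)).
\end{align*}
Subtracting, the $c_i$ terms cancel:
\begin{align*}
b_i(w)-b_{h,i}(w)=a_i\bigl(\widehat{v}_{h,i},z(w)-z_{h,i}(w)\bigr).
\end{align*}

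The bound then follows from the boundedness \eqref{eq:sesqui-bound-local} together with Lemma \ref{lem:discrete-extension} and the Galerkin error \eqref{eq:dual-local-err}:
\begin{align*}
|b_i(w)-b_{h,i}(w)|&\le \Const{bdd}\bigl(1+(kd_i)^{-1}\bigr)\,\Const{weak}(kd_i)^{-1}h^{-1/2}\normL{g}{\partial\Omega_i}\cdot 2\Ce(1+\Ce)\Const{I}\Const{bdd}(kd_i)^{-1}h\normL{w}{\Omega_i}.
\end{align*}
By Assumption \ref{ass:resolution}, $kd_i<\pi$ (more precisely $\Cpoinn{\Omega_i}{1/2}kd_i<1$), so $1+(kd_i)^{-1}\lesssim (kd_i)^{-1}$. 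The powers of $kd_i$ then combine to $(kd_i)^{-3}$ and the powers of $h$ to $h^{1/2}$. All remaining factors are absorbed into a redefined $\Const{weak}$, which is allowed because $\Ce$ is bounded under the resolution assumption.

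The boundary term causes no trouble, because the continuous identity \eqref{eq:normtrace-cont} holds for every $H^1$ function with trace $g$. The $a_i$-harmonicity of $\widehat{v}_{h,i}$ is not even needed for the identity, only for its norm bound.

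The step I expect to be the main obstacle is justifying \eqref{eq:dual-local-err} precisely, including the factor $(kd_i)^{-1}$. It comes from Céa's lemma with continuity constant $\Const{bdd}(1+(kd_i)^{-1})$ and coercivity constant $(2\Ce)^{-1}$, combined with interpolation of the $H^2$ bound \eqref{eq:H2-test}. I will take it as given from the paper. I also need $z_{h,i}(w)$ to be well defined, which holds by Lemma \ref{lem:loc-coercive}. Beyond these points, the argument is the few lines above.
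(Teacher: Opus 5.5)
Your proposal is correct and follows the paper's proof essentially step for step. You test \eqref{eq:normtrace-cont} and \eqref{eq:discrete-flux} with $\widehat{v}_{h,i}$, so that the difference reduces to $a_i(\widehat{v}_{h,i},z(w)-z_{h,i}(w))$, and then bound it using \eqref{eq:sesqui-bound-local}, Lemma \ref{lem:discrete-extension} and \eqref{eq:dual-local-err}, with $1+(kd_i)^{-1}\lesssim (kd_i)^{-1}$ giving the factor $(kd_i)^{-3}h^{1/2}$. One small caveat, which applies equally to the paper: absorbing $\Ce$ into $\Const{weak}$ needs a uniform margin in Assumption \ref{ass:resolution}, because the strict inequality alone does not bound $\Ce$.
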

\begin{proof}
By definitions \eqref{eq:linear-form-local} and \eqref{eq:linear-discrete}, and testing \eqref{eq:normtrace-cont} and \eqref{eq:discrete-flux} with $\widehat{v}_{h,i}$ defined in Lemma \ref{lem:discrete-extension}, we derive
\begin{align*}
|b_i(w)-b_{h,i}(w)|&=\left|\int_{\partial\Omega_i}g\cdot\left(\frac{\partial \overline{z(w)}}{\partial n}-\overline{\lambda_{h,i}(w)}\right)\;\mathrm{d}s
\right|\\
&=\left| a_i(\widehat{v}_{h,i},z(w)-z_{h,i}(w))\right|.
\end{align*}
Then using the boundedness of $a_i(\cdot,\cdot)$ \eqref{eq:sesqui-bound-local}, the conforming Galerkin approximation estimate \eqref{eq:dual-local-err}, and Lemma \ref{lem:discrete-extension}, we obtain 
\begin{align*}
|b_i(w)-b_{h,i}(w)|&\leq \Const{bdd}(1+(kd_i)^{-1})\norm{z(w)-z_{h,i}(w)}_{V(\Omega_i)}\norm{\widehat{v}_{h,i}}_{V(\Omega_i)}\\
&\leq\Const{weak}(kd_i)^{-3}h^{1/2}\normL{w}{\Omega_i}\normL{g}{\partial\Omega_i}.
\end{align*}
This completes the proof.
\end{proof}
Finally, we define the finite element approximation to \eqref{eq:nonstd-variational}. Let
 $v_{h,i}\in V_{h,i}\subset L^2(\Omega_i;\mathbb{C})$ satisfy
 \begin{equation}\label{eq:nonstd-variational-discrete}
%\left\{
\begin{aligned}
c_i(v_{h,i},{w}_{h,i})&=b_{h,i}(w_{h,i})\quad&&\text{ for all }w_{h,i}\in V_{h,i}.
%\frac{\partial v_{h,i}}{\partial n}+\mathrm{i}kv_{h,i}&=0 &&\mbox{ on }\partial \Omega_i\cap\partial\Omega.
%v_{h,i}&=\noteLi{g},&&\text{on }\partial\Omega_i\backslash\partial\Omega.
\end{aligned}
%\right.
\end{equation}
To analyze error estimate, we introduce an impractical finite element approximation scheme to \eqref{eq:equivalence}, which reads as seeking 
$\widetilde{v}_{h,i}\in V_{h,i}\subset L^2(\Omega_i;\mathbb{C})$, s.t., 
\begin{equation}\label{eq:equivalence-imprac}
c_i(\widetilde{v}_{h,i},w_{h,i})=b_i(w_{h,i})\quad\text{for all }w_{h,i}\in V_{h,i}.
\end{equation}
\begin{lemma}\label{L2 error}
Let Assumption \ref{ass:resolution} and \eqref{eq:fine-mesh-cond} hold. Given $g\in V_{h,i}|_{\partial\Omega_i}$. Let $v_{h,i}\in V_{h,i}$ be the solution to \eqref{eq:nonstd-variational-discrete}. Then there holds 
\begin{align*}
\|v_{h,i}\|_{L^2(\Omega_i)}\leq 
\Const{weak}\max\{d_i^{1/2},(kd_i)^{-3}h^{1/2}\}\normL{g}{\partial\Omega_i}.
\end{align*}
\end{lemma}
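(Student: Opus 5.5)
The plan is to compare the practical discrete solution $v_{h,i}$ with the impractical one $\widetilde{v}_{h,i}$ from \eqref{eq:equivalence-imprac}. We bound each part by one of the two terms in the maximum and then use the triangle inequality.

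Since $c_i(\cdot,\cdot)$ is the $L^2(\Omega_i)$ inner product, both \eqref{eq:nonstd-variational-discrete} and \eqref{eq:equivalence-imprac} are $L^2$-projection problems onto the finite-dimensional space $V_{h,i}$. Well-posedness is therefore immediate once $b_i$ and $b_{h,i}$ are bounded on $V_{h,i}$, and the norm of each solution equals the dual norm of its right-hand side restricted to $V_{h,i}$.

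\textbf{Bound for $\widetilde{v}_{h,i}$.} We test \eqref{eq:equivalence-imprac} with $w_{h,i}:=\widetilde{v}_{h,i}$. The bound \eqref{eq: bounded b}, which follows from Lemma \ref{thm:pw-Regularity}, gives
\[
\normL{\widetilde{v}_{h,i}}{\Omega_i}^2=b_i(\widetilde{v}_{h,i})\le \Cw d_i^{1/2}\normL{g}{\partial\Omega_i}\normL{\widetilde{v}_{h,i}}{\Omega_i}.
\]
Hence $\normL{\widetilde{v}_{h,i}}{\Omega_i}\le \Cw d_i^{1/2}\normL{g}{\partial\Omega_i}$. Equivalently, $\widetilde{v}_{h,i}$ is the $L^2$-projection of the very weak solution $v$ of Theorem \ref{thm a2}, so its norm is at most $\normL{v}{\Omega_i}$.

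\textbf{Bound for the difference.} Subtracting \eqref{eq:equivalence-imprac} from \eqref{eq:nonstd-variational-discrete} gives
\[
c_i(v_{h,i}-\widetilde{v}_{h,i},w_{h,i})=b_{h,i}(w_{h,i})-b_i(w_{h,i})\quad\text{for all } w_{h,i}\in V_{h,i}.
\]
We take $w_{h,i}:=v_{h,i}-\widetilde{v}_{h,i}$ and apply Lemma \ref{lem:err-linear-form}. This yields
\[
\normL{v_{h,i}-\widetilde{v}_{h,i}}{\Omega_i}\le \Const{weak}(kd_i)^{-3}h^{1/2}\normL{g}{\partial\Omega_i}.
\]

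\textbf{Conclusion.} By the triangle inequality,
\[
\normL{v_{h,i}}{\Omega_i}\le \bigl(\Cw d_i^{1/2}+\Const{weak}(kd_i)^{-3}h^{1/2}\bigr)\normL{g}{\partial\Omega_i}\le 2\max\{\Cw,\Const{weak}\}\max\{d_i^{1/2},(kd_i)^{-3}h^{1/2}\}\normL{g}{\partial\Omega_i}.
\]
The constants are absorbed into a renamed generic $\Const{weak}$.

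There is no genuine obstacle here. The only point that needs checking is that Lemma \ref{lem:err-linear-form} applies to test functions in $V_{h,i}\subset L^2(\Omega_i)$, which holds because it is stated for all $w\in L^2(\Omega_i)$. Its hypotheses, Assumption \ref{ass:resolution} and \eqref{eq:fine-mesh-cond}, are exactly the ones assumed in the statement.
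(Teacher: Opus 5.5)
Your proof is correct and follows the paper's argument: split via the impractical solution $\widetilde{v}_{h,i}$ and bound $v_{h,i}-\widetilde{v}_{h,i}$ with Lemma \ref{lem:err-linear-form}. The only cosmetic difference is how $\widetilde{v}_{h,i}$ is bounded: the paper estimates $\|v-\widetilde{v}_{h,i}\|_{L^2(\Omega_i)}$ by best approximation and then invokes Theorem \ref{thm a2} for $\|v\|_{L^2(\Omega_i)}$, which also yields an error bound for $v-v_{h,i}$, whereas you bound $\|\widetilde{v}_{h,i}\|_{L^2(\Omega_i)}$ directly from \eqref{eq: bounded b}, which is the same stability estimate.
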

\begin{proof}
An application of the triangle inequality implies, 
\begin{align*}
\normL{v-v_{h,i}}{\Omega_i}\leq \normL{v-\widetilde{v}_{h,i}}{\Omega_i}+\normL{\widetilde{v}_{h,i}-v_{h,i}}{\Omega_i}
\end{align*}
with $\widetilde{v}_{h,i}$ defined in \eqref{eq:equivalence-imprac}. Hence, we only need to derive an upper bound for the two terms on the right hand side. 

To estimate the first term, \eqref{eq:equivalence} and \eqref{eq:equivalence-imprac} implies
    \begin{align*}
        \|v-\widetilde{v}_{h,i}\|_{L^2(\Omega_i)}^2&=c_i(v-\widetilde{v}_{h,i},v-\widetilde{v}_{h,i})\\
        &=c_i(v-\widetilde{v}_{h,i},v-w_{h,i})\\
        &\leq\|v-\widetilde{v}_{h,i}\|_{L^2(\Omega_i)}\|v-w_{h,i}\|_{L^2(\Omega_i)}\text{ for all }w_{h,i}\in V_{h,i}. %V_{h,i}^0,
    \end{align*} 
Hence, we derive
 \begin{align*}
\|v-\widetilde{v}_{h,i}\|_{L^2(\Omega_i)}\leq 
\min_{w_{h,i}\in V_{h,i}}\|v-w_{h,i}\|_{L^2(\Omega_i)}.
\end{align*}
Next, a combination of \eqref{eq:equivalence-imprac} and \eqref{eq:nonstd-variational-discrete}, yields, 
\begin{align*}
c_i(\widetilde{v}_{h,i}-v_{h,i},w_{h,i})=b_i(w_{h,i})-b_{h,i}(w_{h,i})\;\forall w_{h,i}\in V_{h,i}.
\end{align*}
Then by taking $w_{h,i}:=\widetilde{v}_{h,i}-v_{h,i}$ and applying Lemma \ref{lem:err-linear-form}, we derive
\begin{align*}
\|v-v_{h,i}\|_{L^2(\Omega_i)}\leq 
\min_{w_{h,i}\in V_{h,i}}\|v-w_{h,i}\|_{L^2(\Omega_i)}+\Const{weak}(kd_i)^{-3}h^{1/2}\normL{g}{\partial\Omega_i}.
\end{align*}
This, together with Theorem \ref{thm a2}, leads to the desired result.
\end{proof}
%\subsubsection{Equivalence of discrete solutions}
Next, we present the equivalence of \eqref{eq:nonstd-variational-discrete} and the following conforming Galerkin finite element scheme based upon the standard variational form, which reads as seeking $v_{h,i}\in V_{h,i}$, s.t., 
\begin{equation}\label{eq:pde-approx-2}
\left\{
\begin{aligned}
a_i( v_{h,i},\phi_{h,i})&=0,\quad&&\forall \phi_{h,i}\in V_{h,i}^{0},\\
 v_{h,i}&=g,&&\text{on }\partial\Omega_i.
\end{aligned}
\right.
\end{equation}
\begin{lemma}\label{lem:one2one}
Let Assumption \ref{ass:resolution} hold. The mapping : $V_{h,i}\to V_{h,i}^0\times V_{h,i}|_{\partial\Omega_i}$ such that $w_{h,i}\mapsto (z_{h,i},\lambda_{h,i})$ given by \eqref{eq:very-weak-discrete} and \eqref{eq:discrete-flux}, is bijective. Moreover, there holds
\begin{align*}
c_i(\xi_{h,i},w_{h,i})=\langle \xi_{h,i}|_{\partial\Omega_i},\lambda _{h,i}\rangle_{\partial\Omega_i}-a_i(\xi_{h,i},z_{h,i}), \quad \forall \xi_{h,i}\in V_{h,i}.
\end{align*}
\end{lemma}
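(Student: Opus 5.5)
We read the map $w_{h,i}\mapsto (z_{h,i},\lambda_{h,i})$ as a linear map $\Phi:V_{h,i}\to V_{h,i}^0\times V_{h,i}|_{\partial\Omega_i}$. The identity in the statement is simply \eqref{eq:discrete-flux} rearranged, so it needs no separate argument. The real content is bijectivity. We prove it by showing that $\Phi$ is well defined and injective, and that domain and target have the same dimension.

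\emph{Well-definedness.} Fix $w_{h,i}\in V_{h,i}$. By Lemma \ref{lem:loc-coercive}, the form $a_i(\cdot,\cdot)$ is coercive on $V_{h,i}^0\times V_{h,i}^0$. Lax--Milgram in this finite-dimensional space, applied to the adjoint form, gives a unique $z_{h,i}\in V_{h,i}^0$ satisfying \eqref{eq:very-weak-discrete}. The right-hand side of \eqref{eq:discrete-flux},
\[
\xi_{h,i}\mapsto c_i(\xi_{h,i},w_{h,i})+a_i(\xi_{h,i},z_{h,i}),
\]
vanishes for every $\xi_{h,i}\in V_{h,i}^0$, by \eqref{eq:very-weak-discrete}. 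So it depends only on the trace $\xi_{h,i}|_{\partial\Omega_i}$ and defines a conjugate-linear functional on the trace space $V_{h,i}|_{\partial\Omega_i}$. The $L^2(\partial\Omega_i)$ inner product is nondegenerate on this finite-dimensional space, so Riesz representation gives a unique $\lambda_{h,i}\in V_{h,i}|_{\partial\Omega_i}$ satisfying \eqref{eq:discrete-flux}. Linearity of $\Phi$ is immediate.

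\emph{Injectivity.} Suppose $\Phi(w_{h,i})=(0,0)$. Then \eqref{eq:discrete-flux} reduces to $c_i(\xi_{h,i},w_{h,i})=0$ for all $\xi_{h,i}\in V_{h,i}$. Taking $\xi_{h,i}=w_{h,i}$ gives $\|w_{h,i}\|_{L^2(\Omega_i)}=0$, hence $w_{h,i}=0$.

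\emph{Dimension count.} Nodal bases give
\[
\dim V_{h,i}=\#\mathcal{N}(\bar\Omega_i),\qquad \dim V_{h,i}^0+\dim V_{h,i}|_{\partial\Omega_i}=\big(\#\mathcal{N}(\bar\Omega_i)-\#\mathcal{N}_i^\partial\big)+\#\mathcal{N}_i^\partial .
\]
The two sides are equal, so the injective linear map $\Phi$ is bijective.

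The only delicate step is the trace-dependence argument, because the functional must be well defined on $V_{h,i}|_{\partial\Omega_i}$. It relies on the fact that discrete functions sharing a trace differ by an element of $V_{h,i}^0$, which holds because nodal values determine the functions. Surjectivity can also be seen directly: given $(z,\lambda)$, define $w$ by Riesz representation in $V_{h,i}$ with respect to $c_i$, via $c_i(\xi,w)=\langle\xi,\lambda\rangle_{\partial\Omega_i}-a_i(\xi,z)$. Restricting to $\xi\in V_{h,i}^0$ recovers \eqref{eq:very-weak-discrete}, and uniqueness of its solution then shows $\Phi(w)=(z,\lambda)$.
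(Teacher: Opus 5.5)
Your proof is correct and is the standard argument the paper defers to (Berggren's Lemma 5.1): coercivity of $a_i$ on $V_{h,i}^0$ plus Riesz representation on the trace space give well-definedness, testing with $\xi_{h,i}=w_{h,i}$ gives injectivity, and $\dim V_{h,i}=\dim V_{h,i}^0+\dim V_{h,i}|_{\partial\Omega_i}$ (which is just rank--nullity for the trace map), or alternatively your direct construction, gives surjectivity, with the identity being \eqref{eq:discrete-flux} rearranged. One cosmetic slip: the functional $\xi_{h,i}\mapsto c_i(\xi_{h,i},w_{h,i})+a_i(\xi_{h,i},z_{h,i})$ is linear, not conjugate-linear, in $\xi_{h,i}$, and that is exactly what Riesz representation with respect to $\langle\cdot,\cdot\rangle_{\partial\Omega_i}$ (linear in its first slot) requires.
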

\begin{proof}
This proof is similar to \cite[Proof to Lemma 5.1]{MR2084239}, and we skip it for the sake of simplicity.
\end{proof}
\begin{lemma}\label{lem:equiv}
Let Assumption \ref{ass:resolution} hold. Let $g\in V_{h,i}|_{\partial\Omega_i}$. Then Problem \eqref{eq:nonstd-variational-discrete} and Problem \eqref{eq:pde-approx-2} are equivalent.
\end{lemma}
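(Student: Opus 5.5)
Both problems are finite-dimensional and uniquely solvable: \eqref{eq:nonstd-variational-discrete} because $c_i$ is the $L^2$ inner product on $V_{h,i}$, and \eqref{eq:pde-approx-2} by Lemma \ref{lem:loc-coercive} after lifting $g$. It therefore suffices to show that the solution $v_{h,i}$ of \eqref{eq:pde-approx-2} satisfies \eqref{eq:nonstd-variational-discrete}; uniqueness then forces the two solutions to coincide. I use the identity of Lemma \ref{lem:one2one} with $\xi_{h,i}:=v_{h,i}$.

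Fix $w_{h,i}\in V_{h,i}$ and let $(z_{h,i},\lambda_{h,i})=(z_{h,i}(w_{h,i}),\lambda_{h,i}(w_{h,i}))$ be defined by \eqref{eq:very-weak-discrete} and \eqref{eq:discrete-flux}. Taking $\xi_{h,i}=v_{h,i}$ in \eqref{eq:discrete-flux}, or equivalently in the identity of Lemma \ref{lem:one2one}, gives
\begin{align*}
c_i(v_{h,i},w_{h,i})=\langle v_{h,i}|_{\partial\Omega_i},\lambda_{h,i}\rangle_{\partial\Omega_i}-a_i(v_{h,i},z_{h,i}).
\end{align*}
Since $z_{h,i}\in V_{h,i}^0$, the first line of \eqref{eq:pde-approx-2} with $\phi_{h,i}=z_{h,i}$ gives $a_i(v_{h,i},z_{h,i})=0$. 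Since $v_{h,i}=g$ on $\partial\Omega_i$, the boundary term equals $\int_{\partial\Omega_i} g\,\overline{\lambda_{h,i}(w_{h,i})}\,\mathrm{d}s=b_{h,i}(w_{h,i})$. Hence $c_i(v_{h,i},w_{h,i})=b_{h,i}(w_{h,i})$ for all $w_{h,i}\in V_{h,i}$, which is \eqref{eq:nonstd-variational-discrete}.

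For the converse, which gives an argument independent of uniqueness, let $v_{h,i}$ solve \eqref{eq:nonstd-variational-discrete}. The aim is to recover both lines of \eqref{eq:pde-approx-2}, and this uses the bijectivity in Lemma \ref{lem:one2one}. Combining \eqref{eq:nonstd-variational-discrete} with the identity gives
\begin{align*}
\langle v_{h,i}|_{\partial\Omega_i}-g,\lambda_{h,i}\rangle_{\partial\Omega_i}-a_i(v_{h,i},z_{h,i})=0
\end{align*}
for every pair $(z_{h,i},\lambda_{h,i})$ arising from some $w_{h,i}$. By surjectivity, every pair in $V_{h,i}^0\times V_{h,i}|_{\partial\Omega_i}$ arises this way.
\begin{itemize}
\item Choosing $\lambda_{h,i}=0$ and $z_{h,i}$ arbitrary gives $a_i(v_{h,i},\phi)=0$ for all $\phi\in V_{h,i}^0$.
\item Choosing $z_{h,i}=0$ and $\lambda_{h,i}=v_{h,i}|_{\partial\Omega_i}-g$ gives $v_{h,i}=g$ on $\partial\Omega_i$.
\end{itemize}

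The main obstacle is the bijectivity itself, whose proof is only cited. If needed I would prove it by a dimension count, since $\dim V_{h,i}=\dim V_{h,i}^0+\dim V_{h,i}|_{\partial\Omega_i}$, together with injectivity. To show injectivity, suppose $z_{h,i}=0$ and $\lambda_{h,i}=0$. Then \eqref{eq:discrete-flux} gives $c_i(\xi,w_{h,i})=0$ for all $\xi\in V_{h,i}$, so $w_{h,i}=0$. Note also that the maps are well defined: Lemma \ref{lem:loc-coercive} gives unique solvability of \eqref{eq:very-weak-discrete}, and \eqref{eq:discrete-flux} determines $\lambda_{h,i}$ uniquely because the boundary $L^2$ pairing is nondegenerate on traces.
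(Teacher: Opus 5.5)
Your proof is correct and follows the paper's argument. The forward direction inserts $\xi_{h,i}=v_{h,i}$ into \eqref{eq:discrete-flux} exactly as the paper does, and your converse uses the surjectivity of $w_{h,i}\mapsto(z_{h,i},\lambda_{h,i})$ from Lemma \ref{lem:one2one} in the same way as the paper, which starts from an arbitrary pair $(z_{h,i},\lambda_{h,i})$, builds $w_{h,i}$ through $c_i$, and identifies its image. Your uniqueness shortcut and your injectivity-plus-dimension-count proof of the bijectivity are valid additions, and they make the argument self-contained where the paper skips the proof of Lemma \ref{lem:one2one}.
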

\begin{proof}
On the one hand, let $v_{h,i}\in V_{h,i}$ be the solution to \eqref{eq:pde-approx-2}. For any $w_{h,i}\in V_{h,i}$, find $(z_{h,i},\lambda_{h,i})\in V_{h,i}^0\times V_{h,i}|_{\partial\Omega_i}$ by solving \eqref{eq:very-weak-discrete} and \eqref{eq:discrete-flux}. Inserting $\xi_{h,i}:=v_{h,i}$ into \eqref{eq:discrete-flux}, we derive
\begin{align*}
\langle v_{h,i}|_{\partial\Omega_i},\lambda _{h,i}\rangle_{\partial\Omega_i}=
c_i(v_{h,i},w_{h,i})+a_i(v_{h,i},z_{h,i}).
\end{align*}
Since $z_{h,i}\in V_{h,i}^0$, and since $v_{h,i}$ satisfies \eqref{eq:pde-approx-2}, then the second term vanishes and this leads to
\begin{align*}
c_i(v_{h,i},w_{h,i})=\langle g,\lambda _{h,i}\rangle_{\partial\Omega_i}=b_{h,i}(w_{h,i}),\;\forall w_{h,i}\in V_{h,i}.
\end{align*}
This shows that $v_{h,i}$ is the solution to Problem \eqref{eq:nonstd-variational-discrete}. 

On the other hand, let $v_{h,i}\in V_{h,i}$ be the solution to \eqref{eq:nonstd-variational-discrete}. For any $\lambda_{h,i}\in V_{h,i}|_{\partial\Omega_i}$ and $z_{h,i}\in V_{h,i}^0$, let $w_{h,i}\in V_{h,i}$ be defined
\begin{align*}
c_i(\xi_{h,i},w_{h,i})=\langle \xi_{h,i}|_{\partial\Omega_i},\lambda _{h,i}\rangle_{\partial\Omega_i}-a_i(\xi_{h,i},z_{h,i}), \quad \forall \xi_{h,i}\in V_{h,i}.
\end{align*}
Let $\xi_{h,i}:=v_{h,i}$, this leads to
\begin{align*}
c_i(v_{h,i},w_{h,i})=\langle v_{h,i}|_{\partial\Omega_i},\lambda _{h,i}\rangle_{\partial\Omega_i}-a_i(v_{h,i},z_{h,i}).
\end{align*}
Since $v_{h,i}\in V_{h,i}$ is the solution to \eqref{eq:nonstd-variational-discrete}, then
\begin{align*}
c_i(v_{h,i},w_{h,i})=b_{h,i}(w_{h,i})=\langle g,\widehat{\lambda}_{h,i}\rangle_{\partial\Omega_i}.
\end{align*}
Here, $\widehat{\lambda}_{h,i}\in V_{h,i}$ is defined by \eqref{eq:discrete-flux} with $w:=w_{h,i}$. Lemma \ref{lem:one2one} implies that $\widehat{\lambda}_{h,i}=\lambda_{h,i}$, and therefore, a combination of these two identities leads to
\begin{align*}
a_i(v_{h,i},z_{h,i})=\langle v_{h,i}|_{\partial\Omega_i}-g,\lambda _{h,i}\rangle_{\partial\Omega_i},\;\forall z_{h,i}\in V_{h,i}^0,\;\lambda_{h,i}\in V_{h,i}|_{\partial\Omega_i}.
\end{align*}
Hence, $v_{h,i}$ is the solution to Problem \eqref{eq:pde-approx-2}. 
\end{proof}
Next, given $g\in V_{h,i}|_{\partial\Omega_i}$, we introduce $\widetilde{v}_{h,i}\in V_{h,i}$, s.t., 
\begin{equation}\label{eq:pde-approx-3}
\left\{
\begin{aligned}
a_i( \widetilde{v}_{h,i},\phi_{h,i})&=0,\quad&&\forall \phi_{h,i}\in V_{h,i}\text{ with }\phi_{h,i}|_{\gamma_i}=0,\\
 \widetilde{v}_{h,i}&=g|_{\gamma_i},&&\text{on }\gamma_i.
\end{aligned}
\right.
\end{equation}
Here, $\gamma_i\coloneqq \partial\Omega_i\backslash\partial\Omega$ denotes the inner boundary of the subdomain $\Omega_i$. 
Then the well-posedness of \eqref{eq:pde-approx-2} and \eqref{eq:pde-approx-3} implies that $\widetilde{v}_{h,i}$ satisfies \eqref{eq:pde-approx-2} with $g\coloneqq \widetilde{v}_{h,i}|_{\partial\Omega_i}$. Moreover, Lemma \ref{lem:equiv} indicates $\widetilde{v}_{h,i}$ satisfies \eqref{eq:nonstd-variational-discrete} with the same boundary data $g= \widetilde{v}_{h,i}|_{\partial\Omega_i}$.

\begin{theorem}\label{corollary:very-weak-discrete}
Let Assumption \ref{ass:resolution} and \eqref{eq:fine-mesh-cond} hold. Let $g\in V_{h,i}|_{\partial\Omega_i}$ and let $\widetilde{v}_{h,i}$ be the solution to \eqref{eq:pde-approx-3}. Then there exists a constant ${\rm C}_{{\rm weak}}$ independent of $(k,h,\bar{d},\delta)$ such that
\[
\normL{\chi_i\nabla \widetilde{v}_{h,i}}{\Omega_i}\leq {\rm C}_{{\rm weak}}\left(\delta_i^{-1}+k\right)\normL{ \widetilde{v}_{h,i}}{\Omega_i}.
\]
\end{theorem}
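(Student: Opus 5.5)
This is a discrete Caccioppoli inequality, and I will prove it by a cutoff energy argument. The natural test function is $\phi_{h,i}:=\Pi_{h,i}(\chi_i^2\widetilde{v}_{h,i})$. It is admissible in \eqref{eq:pde-approx-3} provided it vanishes on $\gamma_i$. This holds because $\chi_i\in V_h$ is supported in $\bar\Omega_i$, so $\chi_i=0$ at the nodes on $\gamma_i$ (and the nodal interpolant of a function vanishing at those nodes vanishes there). On $\partial\Omega_i\cap\partial\Omega$ the test function need not vanish, but \eqref{eq:pde-approx-3} allows this, and the impedance term then enters with a good sign. The plan is:
\begin{itemize}
\item split $\phi_{h,i}$ into $\chi_i^2\widetilde{v}_{h,i}$ and an interpolation error;
\item take the real part of the tested equation, so that the impedance term $\Re(\mathrm{i}k\int|\cdot|^2)$ drops out;
\item absorb the cross terms and control the interpolation error by superapproximation.
\end{itemize}

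\textbf{Main identity.} Write $v:=\widetilde v_{h,i}$ and $\phi=\chi_i^2v+(\Pi_{h,i}-I)(\chi_i^2 v)$. Expand $\nabla(\chi_i^2 v)=\chi_i^2\nabla v+2\chi_i v\nabla\chi_i$. The real part of $a_i(v,\chi_i^2v)$ equals
\[
\normL{\chi_i\nabla v}{\Omega_i}^2+2\Re\int_{\Omega_i}\chi_i\nabla v\cdot\overline{v}\nabla\chi_i\dx-k^2\normL{\chi_i v}{\Omega_i}^2,
\]
because the boundary term $\mathrm{i}k\int|\chi_i v|^2\ds$ is purely imaginary. Using $\|\nabla\chi_i\|_\infty\le C_{\op G}\delta_i^{-1}$ and Young's inequality on the cross term gives
\[
\tfrac12\normL{\chi_i\nabla v}{\Omega_i}^2\le (4C_{\op G}^2\delta_i^{-2}+k^2)\normL{v}{\Omega_i}^2+|a_i(v,(\Pi_{h,i}-I)(\chi_i^2v))|.
\]

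\textbf{Superapproximation (the main obstacle).} It remains to bound the interpolation-error term. On each fine element $T$, $v$ is linear and $\chi_i$ is linear, so $\chi_i^2v$ is cubic. Its second derivatives involve only $\nabla\chi_i\nabla v$ and $v\nabla\chi_i\nabla\chi_i$, because $D^2\chi_i=0=D^2v$ on $T$. Elementwise interpolation estimates then give
\[
\|\nabla(I-\Pi_{h,i})(\chi_i^2v)\|_{L^2(T)}\lesssim h\big(\delta_i^{-1}\|\chi_i\nabla v\|_{L^2(T)}+\delta_i^{-1}\|\nabla v\|_{L^2(T)}\,\mathrm{osc}+\delta_i^{-2}\|v\|_{L^2(T)}\big).
\]
The weight $\chi_i$ is not exactly present in the middle piece, so I will handle it in one of two ways:
\begin{itemize}
\item write $\chi_i=\chi_i(x_T)+O(h\delta_i^{-1})$ on $T$ and use the inverse estimate $h\|\nabla v\|_{L^2(T)}\lesssim\|v\|_{L^2(T)}$;
\item or, more simply, apply the inverse estimate directly to get $h\delta_i^{-1}\|\nabla v\|_{L^2(T)}\lesssim\delta_i^{-1}\|v\|_{L^2(T)}$.
\end{itemize}
Either way the gradient of the interpolation error is bounded by $\delta_i^{-1}\|v\|$ terms plus $h\delta_i^{-1}\|\chi_i\nabla v\|$. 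The $L^2$ error of the interpolant carries an extra factor $h$.

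\textbf{Closing the estimate.} Bound the remainder with \eqref{eq:sesqui-bound-local}, or elementwise Cauchy–Schwarz. The pairing of $\nabla v$ against $\nabla(I-\Pi_{h,i})(\chi_i^2v)$ is controlled by $\|\nabla v\|\,\delta_i^{-1}\|v\|$. Here the inverse estimate $\|\nabla v\|\lesssim h^{-1}\|v\|$ would be too crude. Instead I will localise: since $\nabla(I-\Pi_{h,i})(\chi_i^2v)$ vanishes on elements where $\chi_i$ is constant, I keep the factor $h$ from the elementwise bound and apply the inverse estimate only on the strip where $\nabla\chi_i\neq0$, using $h\le\delta_i$. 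This yields a term $\lesssim\delta_i^{-2}\|v\|^2+\delta_i^{-1}\|v\|\,\|\chi_i\nabla v\|$. The boundary term on $\partial\Omega\cap\partial\Omega_i$ is handled by a discrete trace inequality together with $kh<1$ from \eqref{eq:fine-mesh-cond}. The $k^2$ term is bounded by $k^2h\delta_i^{-1}\|v\|^2\le k^2\|v\|^2$. Finally, absorbing $\|\chi_i\nabla v\|$ by Young's inequality gives $\normL{\chi_i\nabla v}{\Omega_i}\le\Cw(\delta_i^{-1}+k)\normL{v}{\Omega_i}$.
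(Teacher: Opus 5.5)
Your proposal is correct and follows essentially the same route as the paper: you test \eqref{eq:pde-approx-3} with $\Pi_{h,i}(\chi_i^2\widetilde v_{h,i})$, take real parts to remove the impedance term, apply Young's inequality to the cross term, and control the remainder by elementwise superapproximation with a single inverse estimate $h\|\nabla v\|_{L^2(\tau)}\lesssim\|v\|_{L^2(\tau)}$ on the gradient pairing (here $v=\widetilde v_{h,i}$), using $kh<1$ for the $k^2$ and boundary remainders. The oscillation detour is unnecessary, because on each element $D^2(\chi_i^2 v)=2v\nabla\chi_i(\nabla\chi_i)^T+4\chi_i\operatorname{sym}\big(\nabla\chi_i(\nabla v)^T\big)$ already carries the weight $\chi_i$ exactly; this gives $\|\nabla(I-\Pi_{h,i})(\chi_i^2 v)\|_{L^2(\tau)}\lesssim h\big(\delta_i^{-1}\|\chi_i\nabla v\|_{L^2(\tau)}+\delta_i^{-2}\|v\|_{L^2(\tau)}\big)$ directly, which is what the paper uses.
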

\begin{proof}
Recall that $\{\chi_i\}_{i=1}^N$ is a discrete partition of unity subordinate to the cover $\{\Omega_i\}_{i=1}^N$, which implies that $\chi_i=0$ on $\gamma_i$. Recall also that $\Pi_{h,i}$ denotes the nodal interpolation onto $V_{h,i}$.

%Since $g\in V_{h,i}|_{\partial\Omega_i}$, then Lemma \ref{lem:equiv} implies $v_{h,i}\in V_{h,i}$ is also the solution to \eqref{eq:pde-approx-2}. 

For the sake of simplicity, we abbreviate $\widetilde{v}_{h,i}$ with $v_{h,i}$ in this proof.  Testing \eqref{eq:pde-approx-3} with $\Pi_{h,i}(\chi_i^2v_{h,i})\in V_{h,i}$ and applying integration by parts, we arrive at
\begin{align}
\int_{\Omega_i}\chi_i^2|\nabla v_{h,i}|^2\dx&+2 \int_{\Omega_i}\nabla v_{h,i}\cdot \nabla\chi_i\chi_i \overline{v}_{h,i}\dx
-\int_{\Omega_i}k^2 \chi_i^2|v_{h,i}|^2 \dx\nonumber\\
&+ik\int_{\Gamma_i} v_{h,i}\cdot\chi_i^2 \overline{v}_{h,i}d{\rm s}+\int_{\Omega_i}\nabla v_{h,i}\cdot\nabla\left((\Pi_{h,i}-1)(\chi_i^2\overline{v}_{h,i})\right)\nonumber\\
&-\int_{\Omega_i}k^2 v_{h,i}(\Pi_{h,i}-1)(\chi_i^2 \overline{v}_{h,i})\dx+ik\int_{\Gamma_i} v_{h,i}\cdot(\Pi_{h,i}-1)(\chi_i^2 \overline{v}_{h,i})d{\rm s}=0.\label{eq:000000}
\end{align}
Since $v_{h,i}\in V_{h,i}$ and $\chi_i$ is real-valued, we obtain $\Im \int_{\Gamma_i} v_{h,i}\cdot\chi_i^2 \overline{v}_{h,i}d{\rm s}=0$. 
Hence, taking the real part of the identity \eqref{eq:000000} leads to
\begin{align}
\int_{\Omega_i}\chi_i^2|\nabla v_{h,i}|^2\dx&+2\Re \int_{\Omega_i}\nabla v_{h,i}\cdot \nabla\chi_i\chi_i \overline{v}_{h,i}\dx-\int_{\Omega_i}k^2 \chi_i^2|v_{h,i}|^2 \dx\nonumber\\
&+\Re\int_{\Omega_i}\nabla v_{h,i}\cdot\nabla\left((\Pi_{h,i}-1)(\chi_i^2\overline{v}_{h,i})\right)-\Re\int_{\Omega_i}k^2 v_{h,i}(\Pi_{h,i}-1)(\chi_i^2 \overline{v}_{h,i})\dx\nonumber\\
&-\Im k\int_{\Gamma_i} v_{h,i}\cdot(\Pi_{h,i}-1)(\chi_i^2 \overline{v}_{h,i})d{\rm s}=0.\label{eq:000}
\end{align}
Note that an application of the Cauchy-Schwarz inequality combining with \eqref{eq:pum-property} implies,
\begin{align}\label{eq:222-222}
\left|\int_{\Omega_i}\nabla v_{h,i}\cdot \nabla\chi_i\chi_i \overline{v}_{h,i}\dx\right|\leq \frac{1}{4}\normL{\chi_i\nabla v_{h,i}}{\Omega_i}^2+\Const{G}^2 \delta_i^{-2}\normL{ v_{h,i}}{\Omega_i}^2.
\end{align}
The stability of interpolation operator \eqref{eq:nodal-interp} implies 
\begin{align}\label{eq:lastli}
\normL{(\Pi_{h,i}-1)(\chi_i^2 v_{h,i})}{\Omega_i}\leq\Const{I} h\normL{\nabla(\chi_i^2 v_{h,i})}{\Omega_i},
\end{align}
together with the fact that $\nabla(\chi_i^2 v_{h,i})=2\chi_i v_{h,i}\nabla\chi_i+\chi_i^2\nabla v_{h,i}$, 
we obtain 
\begin{align}\label{eq:333}
\left| \int_{\Omega_i}k^2 v_{h,i}(\Pi_{h,i}-1)(\chi_i^2 \overline{v}_{h,i})\dx\right|
&\leq \frac{1}{4}\normL{\chi_i\nabla v_{h,i}}{\Omega_i}^2\nonumber\\
&+
\Const{I}k^2h\left(2\Const{G}\delta_i^{-1}+\Const{I}k^2h\right)
\normL{v_{h,i}}{\Omega_i}^2.
\end{align}
Moreover, for each $\tau\in\mathcal{T}_h\cap\Omega_i$, by standard interpolation error estimate \eqref{eq:nodal-interp}, there holds
\begin{align}\label{eq:d0000}
\normL{\nabla(\Pi_{h,i}-1)(\chi_i^2v_{h,i})}{\tau}\leq \Const{I}h\normL{D^2(\chi_i^2v_{h,i})}{\tau}.
\end{align}
Here, $D^2(\chi_i^2v_{h,i})=4\chi_i\op{sym}(\nabla\chi_i(\nabla v_{h,i})^T)+2\nabla\chi_i\nabla\chi_i^T v_{h,i}$. Hence, a combination of the H\"{o}lder's inequality, triangle inequality, discrete inverse estimate and \eqref{eq:d0000}, yields
\begin{align*}
&\quad\left|\int_{\Omega_i}\nabla v_{h,i}\cdot\nabla\left((\Pi_{h,i}-1)(\chi_i^2\overline{v}_{h,i})\right)\dx\right|\\
&=\left|\sum_{\tau\in\mathcal{T}_h\cap\Omega_i}\int_{\tau}\nabla v_{h,i}\cdot\nabla\left((\Pi_{h,i}-1)(\chi_i^2\overline{v}_{h,i})\right)\dx\right|\\
&\leq \sum_{\tau\in\mathcal{T}_h\cap\Omega_i}\normL{\nabla v_{h,i}}{\tau}\normL{\nabla(\Pi_{h,i}-1)(\chi_i^2{v}_{h,i})}{\tau}\\
&\leq \sum_{\tau\in\mathcal{T}_h\cap\Omega_i}\Const{inv}\Const{I}\normL{ v_{h,i}}{\tau}\normL{D^2(\chi_i^2v_{h,i})}{\tau}\\
&\leq \sum_{\tau\in\mathcal{T}_h\cap\Omega_i}\Const{inv}\Const{I}\normL{ v_{h,i}}{\tau}\normL{4\chi_i\op{sym}(\nabla\chi_i (\nabla v_{h,i})^T)+2\nabla\chi_i\nabla\chi_i^T v_{h,i}}{\tau}\\
%&\leq \sum_{\tau\in\mathcal{T}_h\cap\Omega_i}\Const{inv}\Const{I}\normL{ v_{h,i}}{\tau}\left(4\normL{\chi_i\nabla\chi_i (\nabla v_{h,i})^T}{\tau}+2\normL{\nabla\chi_i\nabla\chi_i^T v_{h,i}}{\tau}\right)\\
&\leq \sum_{\tau\in\mathcal{T}_h\cap\Omega_i}\Const{inv}\Const{I}\normL{ v_{h,i}}{\tau}\left(4\normL{\chi_i\nabla\chi_i (\nabla v_{h,i})^T}{\tau}+2\normL{\nabla\chi_i\nabla\chi_i^T v_{h,i}}{\tau}\right)\\
&\leq \sum_{\tau\in\mathcal{T}_h\cap\Omega_i}\Const{inv}\Const{I}\normL{ v_{h,i}}{\tau}\left(4\sqrt{d}\Const{G}\delta_i^{-1}\normL{\chi_i\nabla v_{h,i}}{\tau}+2d\Const{G}^2\delta_i^{-2}\normL{ v_{h,i}}{\tau}\right).
\end{align*}
Here, $\Const{inv}$ denotes a constant independent of $(k,h,\bar{d},\delta)$. 
Thus, we derive, 
\begin{align}
&\quad\left|\int_{\Omega_i}\nabla v_{h,i}\cdot\nabla\left((\Pi_{h,i}-1)(\chi_i^2\overline{v}_{h,i})\right)\dx\right|\nonumber\\
&\leq 4\sqrt{d}\sum_{\tau\in\mathcal{T}_h\cap\Omega_i}\Const{inv}\Const{I}\normL{ v_{h,i}}{\tau}\Const{G}\delta_i^{-1}\normL{\chi_i\nabla v_{h,i}}{\tau}+2{d}\sum_{\tau\in\mathcal{T}_h\cap\Omega_i}\Const{inv}\Const{I}\Const{G}^2\delta_i^{-2}\normL{ v_{h,i}}{\tau}^2\nonumber\\
&\leq 4\sqrt{d}\Const{G}\delta_i^{-1}\Const{inv}\Const{I}\normL{ v_{h,i}}{\Omega_i}\normL{\chi_i\nabla v_{h,i}}{\Omega_i}
+2{d}\Const{inv}\Const{I}\Const{G}^2\delta_i^{-2}\normL{ v_{h,i}}{\Omega_i}^2\nonumber\\
&\leq \frac{1}{4}\normL{\chi_i\nabla v_{h,i}}{\Omega_i}^2
+\left(16d\Const{G}^2\delta_i^{-2}
\Const{inv}^2\Const{I}^2
+2{d}\Const{inv}\Const{I}\Const{G}^2\delta_i^{-2}\right)\normL{ v_{h,i}}{\Omega_i}^2.\label{eq:111-111}
\end{align}
Next, we estimate the last term in \eqref{eq:000}. An application of the H\"{o}lder's inequality, the discrete inverse estimate and the trace inequality leads to
\begin{align*}
&\left|\int_{\Gamma_i} v_{h,i}\cdot(\Pi_{h,i}-1)(\chi_i^2 \overline{v}_{h,i})d{\rm s}\right|\\&\leq d_i^{-1/2}\normL{v_{h,i}}{\Omega_i}
\left(d_i^{-1/2}\normL{(\Pi_{h,i}-1)(\chi_i^2 {v}_{h,i})}{\Omega_i}+d_i^{1/2}\normL{\nabla (\Pi_{h,i}-1)(\chi_i^2 {v}_{h,i})}{\Omega_i}\right).
\end{align*}
Then using \eqref{eq:lastli}, \eqref{eq:333}, the Young's inequality and the basic fact that $d_i\geq h$ and $\delta_i\geq h$, we deduce
\begin{align}\label{eq:lastli2}
k\left|\int_{\Gamma_i} v_{h,i}\cdot(\Pi_{h,i}-1)(\chi_i^2 \overline{v}_{h,i})d{\rm s}\right|&\leq \frac{1}{8}\normL{\chi_i\nabla v_{h,i}}{\Omega_i}^2
+2\Const{I}\Const{G}\left(1+d\Const{G}\right)\delta_i^{-1}k\normL{v_{h,i}}{\Omega_i}^2\\&
+4\Const{I}^2\left(1+16d\Const{G}^2\right)k^2\normL{v_{h,i}}{\Omega_i}^2.\nonumber
\end{align}
Plug \eqref{eq:222-222}, \eqref{eq:333}, \eqref{eq:111-111} and \eqref{eq:lastli2} into \eqref{eq:000}, combining with \eqref{eq:fine-mesh-cond}, we obtain
\begin{align*}
\normL{\chi_i\nabla v_{h,i}}{\Omega_i}^2
\lesssim \left(\delta_i^{-2}+k^2\right)\|v_{h,i}\|_{L^2(\Omega_i)}^2
.
\end{align*}
\end{proof} 

%\begin{lemma}[Discrete error in local solver \eqref{eq:Li-local}]
%xx
%\end{lemma}
\begin{corollary}[Approximation properties of the local multiscale space $\mathcal{L}^{-1}_{h,i}(E_{i,\ell})$] \label{thm:proj}
Let Assumption \ref{ass:resolution} and \eqref{eq:fine-mesh-cond} hold. For all $v_i^{\partial}\in V^{\partial}_i$ given by \eqref{eq:local-space}, let $e_{h,i}\in V_{h,i}$ satisfy
\begin{equation}\label{eq:pde-approx}
\left\{
\begin{aligned}
a_i( e_{h,i},w_i)&=0,\quad&&\forall w_i\in V_{h,i}^0,\\
 e_{h,i}&=v_{i}^{\partial}-\mathcal{P}_{i,\ell}(v_{i}^{\partial})&&\text{on }\partial\Omega_i.
\end{aligned}
\right.
\end{equation}
Then there holds
\begin{comment}
    \begin{align*}
\norm{e_{h,i}}_{V(\Omega_i)}&\leq \Const{weak}(1+2\Ce{\rm C_{bdd}})\sqrt{\frac{H}{h}}2^{-\ell/2}\| v_{i}^{\partial}\|_{H^1( \Omega_i)}.
%\normL{\nabla(\chi_i{e}_{h,i})}{\Omega_i}
%&\leq\Cw\Ce 2^{-\ell/2}\delta^{-1} d_i\| v_{i}^{\partial}\|_{H^1( \Omega_i)}\noteLi{+2\Ce (1+(Hk)^{-1})\min_{v_{h,i}\in V_{h,i}}\norm{e_i-v_{h,i}}_{V(\Omega_i)}}.
\end{align*}
\end{comment}
\begin{align*}
\|\chi_ie_{h,i}\|_{V(\Omega_i)}\leq\Const{weak} \eta_i d_i(\delta_i^{-1}+k)2^{-\ell/2}\|v_{i}^{\partial}\|_{H^1(\Omega_i)}
\end{align*}
with the subdomain-wise quantity $\eta_i$ defined by
\begin{align*}
\eta_i\coloneqq \max\left\{1,d_i^{-1/2}(kd_i)^{-3}h^{1/2}\right\}.
\end{align*}
\end{corollary}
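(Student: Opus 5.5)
The plan is to chain three earlier results: Theorem \ref{corollary:very-weak-discrete} turns the weighted $H^1$-norm of $e_{h,i}$ into an $L^2$-norm, Lemma \ref{L2 error} bounds that $L^2$-norm by the boundary data, and Proposition \ref{prop:approx-wavelets} bounds the boundary data by $2^{-\ell/2}$ times an $H^1$-norm.

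Set $g:=v_i^{\partial}-\mathcal{P}_{i,\ell}(v_i^{\partial})$ on $\partial\Omega_i$. Since $V_h(\Gamma_i^j)$ refines $V_\ell(\Gamma_i^j)$, $g$ lies in $V_{h,i}|_{\partial\Omega_i}$. By definition, $e_{h,i}$ solves \eqref{eq:pde-approx-2} with this $g$, so Lemma \ref{lem:equiv} says it also solves \eqref{eq:nonstd-variational-discrete}.

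\emph{Step 1: weighted norm to $L^2$.} Expand
\[
\|\chi_ie_{h,i}\|_{V(\Omega_i)}^2=\normL{\nabla(\chi_ie_{h,i})}{\Omega_i}^2+k^2\normL{\chi_ie_{h,i}}{\Omega_i}^2 .
\]
Using $\nabla(\chi_ie_{h,i})=\chi_i\nabla e_{h,i}+e_{h,i}\nabla\chi_i$, $0\le\chi_i\le1$ and $|\nabla\chi_i|\le \Const{G}\delta_i^{-1}$ gives
\[
\|\chi_ie_{h,i}\|_{V(\Omega_i)}\lesssim \normL{\chi_i\nabla e_{h,i}}{\Omega_i}+(\delta_i^{-1}+k)\normL{e_{h,i}}{\Omega_i}.
\]
The gradient term is exactly what Theorem \ref{corollary:very-weak-discrete} controls. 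That theorem is stated for the solution $\widetilde v_{h,i}$ of \eqref{eq:pde-approx-3}, which is harmonic only up to $\gamma_i$. I would argue its proof uses only testing with $\Pi_{h,i}(\chi_i^2 v)$, and $\chi_i$ vanishes on $\gamma_i$. For interior subdomains ($\gamma_i=\partial\Omega_i$) the two problems coincide. Otherwise I would take the remark after \eqref{eq:pde-approx-3}: $\widetilde v_{h,i}$ solves \eqref{eq:pde-approx-2} with its own trace as data. Either way, we reach $\|\chi_ie_{h,i}\|_{V(\Omega_i)}\lesssim(\delta_i^{-1}+k)\normL{e_{h,i}}{\Omega_i}$.

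\emph{Step 2: $L^2$ to boundary data.} Lemma \ref{L2 error} applied to $e_{h,i}$ gives
\[
\normL{e_{h,i}}{\Omega_i}\le \Const{weak}\max\{d_i^{1/2},(kd_i)^{-3}h^{1/2}\}\normL{g}{\partial\Omega_i}=\Const{weak}\,\eta_i\, d_i^{1/2}\normL{g}{\partial\Omega_i}.
\]

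\emph{Step 3: boundary data to $H^1$.} Since $\mathcal{I}_{i,\ell}$ is the $L^2$ projection, I apply Proposition \ref{prop:approx-wavelets} with $s=1/2$ on each edge $\Gamma_i^j$, rescaled from the unit interval to an edge of length $\simeq d_i$. The scaling gives
\[
\normL{g}{\partial\Omega_i}\lesssim 2^{-\ell/2}d_i^{1/2}|v_i^{\partial}|_{H^{1/2}(\partial\Omega_i)}.
\]
The scaled trace inequality then bounds $|v_i^\partial|_{H^{1/2}(\partial\Omega_i)}\lesssim \|v_i^\partial\|_{H^1(\Omega_i)}$. Multiplying the two factors of $d_i^{1/2}$ and combining Steps 1–3 gives the claimed bound $\Const{weak}\eta_i d_i(\delta_i^{-1}+k)2^{-\ell/2}\|v_i^\partial\|_{H^1(\Omega_i)}$.

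The main obstacle is this scaling bookkeeping in Step 3. The projection is performed edge by edge while $E_{i,\ell}$ requires continuity at the corners, and the powers of $d_i$ must come out exactly as $d_i^{1/2}$. I would handle the corners by using piecewise $H^{1/2}$ seminorms on each $\Gamma_i^j$, which are dominated by the global $H^{1/2}(\partial\Omega_i)$ seminorm. I would absorb the lower-order scaling terms into the full $H^1$ norm, using $kd_i<1$ and the fact that the seminorm scales homogeneously.
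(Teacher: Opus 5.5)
Your three-step chain is exactly the paper's proof, which is the single sentence citing Proposition~\ref{prop:approx-wavelets}, Lemma~\ref{L2 error} and Theorem~\ref{corollary:very-weak-discrete}. Your Steps~2 and~3 supply the omitted bookkeeping correctly: the identity $\max\{d_i^{1/2},(kd_i)^{-3}h^{1/2}\}=\eta_i d_i^{1/2}$, the scale invariance of the edge $H^{1/2}$-seminorm, and the trace inequality. One small fix in Step~3: $\mathcal{I}_{i,\ell}$ is the $L^2(\partial\Omega_i)$-projection onto the space $E_{i,\ell}$ of \emph{continuous} functions, not an edgewise projection. Edgewise projections are not in $E_{i,\ell}$, so edgewise seminorms do not resolve the corners. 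Instead, use that $\mathcal{I}_{i,\ell}$ is the $L^2$-best approximation, and interpolate on the closed polygon $\partial\Omega_i$ between $L^2$-stability and the $O(2^{-\ell}d_i)$ error of the level-$\ell$ nodal interpolant for $H^1(\partial\Omega_i)$ functions. Work modulo constants, which lie in $E_{i,\ell}$.

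The genuine gap is Step~1 on subdomains meeting the outer boundary, $\Gamma_i:=\partial\Omega_i\cap\partial\Omega\neq\emptyset$, and neither of your two justifications covers it. First, $\chi_i$ vanishes on $\gamma_i$ but not on $\Gamma_i$, so $\Pi_{h,i}(\chi_i^2e_{h,i})\notin V_{h,i}^0$ is not an admissible test function in \eqref{eq:pde-approx}. Second, the remark after \eqref{eq:pde-approx-3} runs the wrong way. It says that solutions of \eqref{eq:pde-approx-3} solve \eqref{eq:pde-approx-2} with their own trace; you need the converse for $e_{h,i}$. But $e_{h,i}$ has its trace on $\Gamma_i$ prescribed as $v_i^{\partial}-\mathcal{P}_{i,\ell}v_i^{\partial}$, and in general it does not satisfy the natural impedance condition that \eqref{eq:pde-approx-3} imposes on $\Gamma_i$.

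The estimate itself actually fails for Dirichlet solutions there. Suppose $w\in V_{h,i}$ solves \eqref{eq:pde-approx-2} with data oscillating on the mesh scale along a part of $\Gamma_i$ where $\chi_i\ge c>0$. Then $w$ is a discrete boundary layer of width $O(h)$, so $\|\chi_i\nabla w\|_{L^2(\Omega_i)}\sim h^{-1}\|w\|_{L^2(\Omega_i)}$. This exceeds $(\delta_i^{-1}+k)\|w\|_{L^2(\Omega_i)}$ by an unbounded factor once $\delta_i\gg h$.

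The paper applies Theorem~\ref{corollary:very-weak-discrete} to $e_{h,i}$ without comment, so you inherited this gap rather than introduced it. Still, as written your argument proves the corollary only for interior subdomains ($\gamma_i=\partial\Omega_i$), where \eqref{eq:pde-approx-2} and \eqref{eq:pde-approx-3} coincide. Covering boundary subdomains would require posing the local problems with Dirichlet data only on $\gamma_i$ and the impedance condition on $\Gamma_i$. Step~2 would then need extra input as well: Lemma~\ref{L2 error}, reached through that remark, bounds $\|e_{h,i}\|_{L^2(\Omega_i)}$ by the full trace norm $\|e_{h,i}\|_{L^2(\partial\Omega_i)}$, which includes the part on $\Gamma_i$ that is no longer prescribed.
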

\begin{proof}
This assertion follows from Proposition \ref{prop:approx-wavelets}, Lemma \ref{L2 error} and Theorem \ref{corollary:very-weak-discrete}.
\end{proof}
\subsection{Contractive property of $I-B_{\op{hybrid}}A $}
Next, we derive approximation properties of the edge multiscale ansatz space $V_{h,0}$ to the target Helmholtz harmonic linear space $V^{\partial}$ and to the solution of the adjoint problem \eqref{eqn:weakformDual}. Similar to \cite[Theorem 4.2]{MR4860976}, the proof is based on the global-to-local splittings, but utilizing a weighted prolongation $\tilde{R}^T_i$ instead of the partition of the unity functions $\chi_i$. 
\begin{lemma}[Approximation properties of $V_{h,0}$ to $V^{\partial}$] \label{thm:last-app-harmonic}
Let Assumption \ref{ass:resolution} and \eqref{eq:fine-mesh-cond} hold.
For any $v^{\partial}\in V^{\partial}$ defined in \eqref{eq:glo-edge-space}, there exists $\Const{\rm ap}$  independent of $(k,h,\bar{d},\delta)$ such that
\begin{equation}\label{eq:waveletErrconv1}
\|v^{\partial}-\mathcal{P}_0 v^{\partial}\|_V
\leq \Const{\rm ap}\bar{\eta}\bar{d}(\delta^{-1}+k)2^{-\ell/2}\left(\sum_{i=1}^N\| v_{i}^{\partial}\|_{H^{1}( \Omega_i)}^2\right)^{1/2}.
\end{equation}
Here, we denote
\begin{align*}
\bar{\eta}\coloneqq \max_{i=1,\cdots,N}\left\{\eta_i\right\}.
\end{align*}
%\begin{equation}\label{eq:waveletErrconv1}
%\|v^{\partial}-\mathcal{P}_0 v^{\partial}\|_V
%\leq \Const{\rm ap} \Ce\delta^{-1} H2^{-\ell/2}\|{v^{\partial}}\|_{V}.
%\end{equation} 
\end{lemma}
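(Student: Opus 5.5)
The proof is a global-to-local splitting. We write $v^{\partial}=\sum_{i=1}^N \tilde{R}_i^T v_i^{\partial}$ with $v_i^{\partial}\in V_i^{\partial}$. We interpret $\mathcal{P}_0 v^{\partial}$ through the local projections, i.e.\ $\mathcal{P}_0 v^{\partial}=\sum_{i=1}^N \tilde{R}_i^T\mathcal{P}_{i,\ell}(v_i^{\partial}|_{\partial\Omega_i})$, which is consistent with \eqref{eq:glo-proj} applied to the local components. The error then reads
\begin{align*}
v^{\partial}-\mathcal{P}_0 v^{\partial}=\sum_{i=1}^N \tilde{R}_i^T\big(v_i^{\partial}-\mathcal{P}_{i,\ell}v_i^{\partial}\big).
\end{align*}
Since $v_i^{\partial}$ and $\mathcal{P}_{i,\ell}v_i^{\partial}$ both satisfy the homogeneous local equation $a_i(\cdot,w)=0$ for all $w\in V_{h,i}^0$, their difference has zero interior residual. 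Its trace on $\partial\Omega_i$ is $v_i^{\partial}-\mathcal{I}_{i,\ell}v_i^{\partial}$, so by uniqueness of the discrete Dirichlet problem it coincides with $e_{h,i}$ from \eqref{eq:pde-approx}.

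We then apply Lemma \ref{thm:pum-2} with $v_i:=e_{h,i}$:
\begin{align*}
\|v^{\partial}-\mathcal{P}_0 v^{\partial}\|_V\leq \sqrt{\Lambda}(1+\Const{I})\Big(\sum_{i=1}^N\|\chi_i e_{h,i}\|_{V(\Omega_i)}^2\Big)^{1/2}.
\end{align*}
Each summand is bounded by Corollary \ref{thm:proj}:
\begin{align*}
\|\chi_ie_{h,i}\|_{V(\Omega_i)}\leq \Const{weak}\,\eta_i d_i(\delta_i^{-1}+k)2^{-\ell/2}\|v_i^{\partial}\|_{H^1(\Omega_i)}.
\end{align*}
Next we bound the subdomain quantities uniformly: $\eta_i\le\bar\eta$, $d_i\le \bar d$, and $\delta_i^{-1}\le\delta^{-1}$ (since $\delta$ is the smallest overlap width). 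These factors come out of the sum, and squaring and summing gives \eqref{eq:waveletErrconv1} with $\Const{\rm ap}:=\sqrt{\Lambda}(1+\Const{I})\Const{weak}$.

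The main obstacle is the bookkeeping that identifies the local error with $e_{h,i}$ and makes the decomposition well defined. The representation of $v^{\partial}$ through $v_i^{\partial}$ need not be unique, so the statement must be read with a fixed decomposition, which is the one appearing on the right-hand side. One must also check that $\mathcal{P}_{i,\ell}$ acting on $v_i^{\partial}|_{\partial\Omega_i}$ gives the discrete Helmholtz-harmonic extension of the $L^2$-projected trace. This requires $E_{i,\ell}\subset V_{h,i}|_{\partial\Omega_i}$, which follows from the assumed refinement property, so that \eqref{eq:Li} is well posed by Lemma \ref{lem:loc-coercive}. If Corollary \ref{thm:proj} is applied as stated, the rest is routine summation using the finite-overlap assumption.
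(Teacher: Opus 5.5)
Your proposal is correct and is essentially the paper's proof: the same splitting $v^{\partial}-\mathcal{P}_0v^{\partial}=\sum_{i}\tilde{R}_i^T e_{h,i}$, then Lemma \ref{thm:pum-2}, Corollary \ref{thm:proj}, and the uniform bounds $\eta_i\le\bar\eta$, $d_i\le\bar d$, $\delta_i^{-1}\le\delta^{-1}$. You read $\mathcal{P}_0v^{\partial}$ through the local traces $v_i^{\partial}|_{\partial\Omega_i}$ for a fixed decomposition, and the paper relies on this same reading implicitly when it asserts that $e_{h,i}$ satisfies \eqref{eq:pde-approx}, since the trace of the global $v^{\partial}$ on $\partial\Omega_i$ generally differs from that of $v_i^{\partial}$.
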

	\begin{proof}
 By \eqref{eq:glo-edge-space} and  \eqref{eq:glo-proj}, each $v^{\partial}\in V^{\partial}$ and its coarse projection $\mathcal{P}_0(v^{\partial})$ at level $\ell$ can be expressed 
\begin{align}\label{eq0}
v^{\partial}&:=\sum_{i=1}^N \tilde{R}^T_{i}v_{i}^{\partial}\quad \mbox{and}\quad \mathcal{P}_0(v^{\partial})=\sum_{i=1}^N\tilde{{R}}_i^T \left(\mathcal{P}_{i,\ell}v^{\partial}|_{\partial\Omega_i}\right).
\end{align}
%then together with \eqref{eq0} and Theorem \ref{thm:pum-2}, we derive
%\begin{align}
%\|v^{\partial}-\mathcal{P}_0 v^{\partial}\|_{L^2(\Omega)}
%&\lesssim \left(\sum_{i=1}^N \|v^{\partial}_i-\mathcal{P}_{i,\ell}v^{\partial}\|_{L^2(\Omega_i)}^2
%+h^2\left\|\nabla(\chi_i(v^{\partial}_i-\mathcal{P}_{i,\ell}v^{\partial}|_{\partial\Omega_i}))\right\|_{L^2(\Omega_i)}^2\right)^{1/2},
%\label{eq3}\\
%\|\nabla \left(v^{\partial}-\mathcal{P}_0 v^{\partial}\right)\|_{L^2(\Omega)}
%&\leq \left(\sum_{i=1}^N \delta_i^{-2}\|v^{\partial}_i-\mathcal{P}_{i,\ell}v^{\partial}|_{\partial\Omega_i}\|_{L^2(\Omega_i)}^2
%+\left\|\chi_i\nabla \left(v^{\partial}_i-\mathcal{P}_{i,\ell}v^{\partial}|_{\partial\Omega_i}\right)\right\|_{L^2(\Omega_i)}^2\right)^{1/2}
%\label{eq4}.
%	\end{align}
\begin{comment}
    	\begin{align}
		\|v^{\partial}-\mathcal{P}_0 v^{\partial}\|_{L^2(\Omega)}&\leq \sqrt{\Cov}C_{\infty}\left(\sum_{i=1}^N\|v^{\partial}_i-\mathcal{P}_{i,\ell}v^{\partial}|_{\partial\Omega_i}\|_{L^2(\Omega_i)}^2\right)^{1/2},\label{eq3}\\
		\|\nabla \left(v^{\partial}-\mathcal{P}_0 v^{\partial}\right)\|_{L^2(\Omega)}&\leq \sqrt{2\Cov}\left(\sum_{i=1}^NC_{\text{G}}^2\delta_i^{-2}\|v^{\partial}_i-\mathcal{P}_{i,\ell}v^{\partial}|_{\partial\Omega_i}\|_{L^2(\Omega_i)}^2
		+\|\chi_i\nabla \left(v^{\partial}_i-\mathcal{P}_{i,\ell}v^{\partial}|_{\partial\Omega_i}\right)\|_{L^2(\Omega_i)}^2\right)^{1/2}\label{eq4}.
	\end{align}
\end{comment}
Let $e_h:=v^{\partial}-\mathcal{P}_0 v^{\partial}$. Then 
$e_h=\sum_{i=1}^N \tilde{\RR}^T_{i}e_{h,i}$, with $ e_{h,i}:=v^{\partial}_i-\mathcal{P}_{i,\ell} v^{\partial}|_{\partial\Omega_i}$.
Since $e_{h,i}$ satisfies \eqref{eq:pde-approx}, then a combination of Corollary \ref{thm:proj}, \eqref{eq0} and Lemma \ref{thm:pum-2}, leads to 
\begin{align*}
\|e_h\|_{V}&\leq\left(\Lambda(1+\Const{I}^2)\sum_{i=1}^N \|\chi_ie_{h,i}\|_{V(\Omega_i)}^2\right)^{1/2}\\
%\lesssim\left(C_{\infty}+k^{-1}\delta^{-1}C_{\op{G}}\right)\left(\sum_{i=1}^N \|e_{h,i}\|_{V(\Omega_i)}^2\right)^{1/2}\\
   &\lesssim \max_{i=1,\cdots,N}\{\eta_i d_i\}(\delta^{-1}+k)2^{-\ell/2}\left(\sum_{i=1}^N\| v_{i}^{\partial}\|_{H^{1}( \Omega_i)}^2\right)^{1/2}.
\end{align*}
%\begin{align*}
%\|e_h\|_{L^2(\Omega)}&\lesssim \left(\sum_{i=1}^N\|e_{h,i}\|_{L^2(\Omega_i)}^2+h^2\|\nabla (\chi_ie_{h,i})\|_{L^2(\Omega_i)}^2\right)^{1/2}
%\lesssim\Ce2^{-\ell/2}d_i \left(\sum_{i=1}^N\| v_{i}^{\partial}\|_{H^1( \Omega_i)}^2\right)^{1/2},\\%\label{eq5}\\
%\|\nabla e_h\|_{L^2(\Omega)}&\lesssim  \left(\sum_{i=1}^N\|\nabla (\chi_ie_{h,i})\|_{L^2(\Omega_i)}^2\right)^{1/2}
%\lesssim\Ce 2^{-\ell/2}\delta^{-1}{d_i}\left(\sum_{i=1}^N 
%\| v_{i}^{\partial}\|^2_{H^1( \Omega_i)}\right)^{1/2}.%\label{eq6}
%\end{align*}
%By combining this estimate with Assumption \ref{ass:resolution}, 
Thus, we prove \eqref{eq:waveletErrconv1}. %\Yueqi{The result I get is a little different from \eqref{eq:waveletErrconv1}. From the above two equations and since $\Vert v\Vert_V=\sqrt{\|\nabla v\|_{\Omega}^2+k^2\|v\|_{\Omega}^2}$, I get---- }\noteLi{use Assumption \ref{ass:resolution} you can get the claimed result.}
	%\begin{equation*}
		%\|v^{\partial}-\mathcal{P}_0 v^{\partial}\|_V\leq \Const{\ref{thm:last}} \Ce 2^{-\ell/2}\left(k^2H^2+H^2+(\noteLi{\frac{H}{h}})^2\right)^{1/2}\Vert v^{\partial}\Vert_V
	%\end{equation*}
\end{proof}
\begin{lemma}[Approximation properties of $V_{h,0}$ to the discrete solution of the adjoint problem \eqref{eqn:weakformDual}] \label{thm:last}
Let condition \eqref{eq:fine-mesh-cond} and Assumption \ref{ass:resolution} hold. 
For all $w_h\in V_h$, let $z_h\in V_h$ satisfy
\begin{align*}%\label{eqn:weakformDual}
a^*(z_h, v_h)=(w_h,v_h)_{\Omega}, \quad\forall v_h\in V_h,
\end{align*}
which corresponds to the conforming Galerkin approximation to the adjoint problem \eqref{eqn:weakformDual}. Assume further that
\begin{align}\label{eq:overlap-diameter}
\frac{\bar{d}}{\delta}\lesssim 1.
\end{align}
Then there exists $\Const{st}$  independent of $(k,h,\bar{d},\delta)$ such that
\begin{equation}\label{eq:waveletErrconvDual}
\inf\limits_{v_0\in V_{h,0}}\|z_h-v_0\|_V\leq
\Const{st}\bar{\eta}{\bar{d}}(\delta^{-1}+k)\left(\bar{d}+2^{-\ell/2}\right)
\|{w_h}\|_{L^2(\Omega)}.
\end{equation}
%\begin{equation}\label{eq:waveletErrconvDual}
%\inf\limits_{v_0\in V_{h,0}}\|z_h-v_0\|_V\leq
%\Const{\ref{thm:last}}\delta^{-1}{H}
%(H+\Ce^2 2^{-\ell/2})\|{w_h}\|_{L^2(\Omega)}.
%\end{equation}
\end{lemma}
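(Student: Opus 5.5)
We split $z_h$ into a local bubble part and a local Helmholtz-harmonic part on each subdomain, as in the global-to-local splitting behind Lemma \ref{thm:last-app-harmonic}. By \eqref{eq:pou}, $z_h=\sum_{i=1}^N\tilde{R}_i^T(z_h|_{\Omega_i})$. On each $\Omega_i$ we write
\[
z_h|_{\Omega_i}=z_i^0+z_i^{\partial},
\]
where $z_i^0\in V_{h,i}^0$ solves the local adjoint Dirichlet problem $a_i^*(z_i^0,w)=(w_h,w)_{\Omega_i}$ for all $w\in V_{h,i}^0$. Then $z_i^{\partial}:=z_h|_{\Omega_i}-z_i^0$ is discrete Helmholtz-harmonic for the adjoint form and has boundary trace $z_h|_{\partial\Omega_i}$. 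This works because $a^*(z_h,\cdot)$ restricted to test functions supported in $\Omega_i$ coincides with the interior part of $a_i^*$.

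The Dirichlet problems are self-adjoint up to conjugation (no impedance term on $V_{h,i}^0$), so Lemma \ref{lem:loc-coercive}, \eqref{eq:gradient} and \eqref{eq:l2} at the discrete level give
\[
\|z_i^0\|_{V(\Omega_i)}\lesssim d_i\|w_h\|_{L^2(\Omega_i)},
\]
using $kd_i<1$. Since $\operatorname{conj}(z_i^{\partial})\in V_i^{\partial}$ and $\mathcal{P}_{i,\ell}$ commutes with conjugation, all the local theory, and Corollary \ref{thm:proj} in particular, applies to $z_i^{\partial}$.

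We take the candidate
\[
v_0:=\sum_{i}\tilde{R}_i^T\mathcal{P}_{i,\ell}(z_h|_{\partial\Omega_i})\in V_{h,0}.
\]
The error then splits as
\[
z_h-v_0=\sum_i\tilde{R}_i^Tz_i^0+\sum_i\tilde{R}_i^T e_{h,i},
\]
where $e_{h,i}$ is the harmonic extension of $z_h|_{\partial\Omega_i}-\mathcal{I}_{i,\ell}z_h|_{\partial\Omega_i}$, exactly as in \eqref{eq:pde-approx}. For the bubble sum, Lemma \ref{thm:pum-2} and \eqref{eq:pum-property} give
\[
\|\chi_iz_i^0\|_{V(\Omega_i)}\lesssim(1+(k\delta_i)^{-1})\|z_i^0\|_{V(\Omega_i)}\lesssim d_i(\delta^{-1}+k)\,k^{-1}\cdot k\,d_i\|w_h\|_{L^2(\Omega_i)}.
\]
More simply, using the $L^2$ bound $\|z_i^0\|_{L^2}\lesssim d_i^2\|w_h\|$ and the gradient bound $d_i\|w_h\|$, we get $\|\chi_iz_i^0\|_V\lesssim d_i^2(\delta^{-1}+k)\|w_h\|_{L^2(\Omega_i)}$. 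Summing with finite overlap gives the $\bar{d}^2(\delta^{-1}+k)$ term, and $\bar\eta\ge1$ absorbs the factor.

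For the harmonic sum, Corollary \ref{thm:proj} gives $\|\chi_ie_{h,i}\|_V\lesssim\eta_id_i(\delta_i^{-1}+k)2^{-\ell/2}\|z_i^{\partial}\|_{H^1(\Omega_i)}$. Lemma \ref{thm:pum-2} with finite overlap then reduces the task to bounding
\[
\Big(\sum_i\|z_i^{\partial}\|_{H^1(\Omega_i)}^2\Big)^{1/2}\le\Big(\sum_i\|z_h\|_{H^1(\Omega_i)}^2+\|z_i^0\|_{H^1(\Omega_i)}^2\Big)^{1/2}\lesssim\|z_h\|_{H^1(\Omega)}+\bar d\|w_h\|.
\]
For the first term we use discrete stability, Theorem \ref{thm_1} with Remark \ref{final:1111}: $\|z_h\|_V\le\Const{stab}\|w_h\|_{L^2}$, and $\|\cdot\|_{H^1}\lesssim\|\cdot\|_V$ for $k\ge k_0$. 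Combining the two sums yields $\bar\eta\bar d(\delta^{-1}+k)(\bar d+2^{-\ell/2})\|w_h\|_{L^2(\Omega)}$. The assumption \eqref{eq:overlap-diameter} is used to control the factors $(1+(k\delta)^{-1})$ and $d_i/\delta_i$ from the partition of unity that multiply the bubble part.

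The main obstacle is the bubble estimate. We must check that the discrete local Dirichlet adjoint problem inherits the PDE-level bounds \eqref{eq:gradient} and \eqref{eq:l2}. This should hold because the Poincaré argument works verbatim on $V_{h,i}^0\subset H_0^1(\Omega_i)$. We must also confirm that $z_h|_{\Omega_i}$ minus the bubble truly lies in (the conjugate of) $V_i^{\partial}$, given the impedance term on $\Gamma_i=\partial\Omega_i\cap\partial\Omega$. That term vanishes against $V_{h,i}^0$, so the decomposition is consistent.
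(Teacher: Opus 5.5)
Your proposal is correct and follows the paper's proof: the same local bubble/Helmholtz-harmonic splitting of $z_h|_{\Omega_i}$, the same candidate $v_0$, Corollary \ref{thm:proj} with Lemma \ref{thm:pum-2} for the harmonic part, and discrete stability of $z_h$ applied after summing over $i$. The conjugation detour is unnecessary, since the interior form is Hermitian on $V_{h,i}^0$ and so $z_i^{\partial}\in V_i^{\partial}$ already; and you should discard your first bubble display, whose last step would require $kd_i\gtrsim 1$. Keep instead your second computation via the Poincar\'e bound $\|z_i^0\|_{L^2(\Omega_i)}\lesssim d_i^2\|w_h\|_{L^2(\Omega_i)}$, using $\delta\lesssim d_i$ to absorb the gradient term. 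That computation gives the $\bar d^2(\delta^{-1}+k)$ term directly and, unlike the paper's route to the sharper bound $\bar d\|w_h\|_{L^2(\Omega)}$, does not actually need \eqref{eq:overlap-diameter}.
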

\begin{proof}
%with $z_h^{i,\roma}:=z_h|_{\Omega_i}-(z_h^{i,\romb}+\overline{v^{i}}\dashint_{\Omega_i}w_h{\rm d}x)$ and $\dashint_{\Omega_i}v{\rm d}x:=|\Omega_i|^{-1}\int_{\Omega_i}v{\rm{d}}x$ denotes the average of the function $v\in L^1(\Omega_i)$ over each subdomain $\Omega_i$. 
Let $z_h^{i,\romb}\in V_{h,i}$ be the solution to the following problem 
	\begin{equation*}
		\left\{
		\begin{aligned}
			a_i(z_h^{i,\romb},v_i)&=0,&&\forall v_i\in V_{h,i}^0,\\
			z_h^{i,\romb}&=z_{h}|_{\partial\Omega_i},&&\mbox{on }\partial\Omega_i, 
		\end{aligned}
		\right.
	\end{equation*}
and let $z_h^{i,\roma}:=z_h|_{\Omega_i}-z_h^{i,\romb}$. 
Then a simple calculation reveals that $z_h^{i,\roma}\in V_{h,i}^0$ satisfies
\begin{equation}\label{eq:loc-bubble-id}	
a_i(z_h^{i,\roma},z_h^{i,\roma})=\left(z_h^{i,\roma},w_h\right)_{\Omega_i}.
\end{equation}

Together with \eqref{eq:pou}, we derive the global splitting 
\begin{align*}
z_h&=\sum_{i=1}^N\tilde{{R}}_i^{T}(z_h|_{\Omega_i})
=\sum_{i=1}^N\tilde{{R}}_i^{T}\left(z_h^{i,\roma}+z_h^{i,\romb}\right).
\end{align*}
	%\begin{align*}
	%	\Vert z\Vert_V&\lesssim(1+k)\sup_{v\in V}\frac{|\bar{a}(z,v)|}{\Vert v\Vert_V}=(1+k)\sup_{v\in V}\frac{|a(v,z)|}{\Vert v\Vert_V}\\
	%	&\lesssim(1+k)\sup_{v\in V}\frac{|(v,w)_{\Omega}|}{\Vert v\Vert_V}\lesssim(1+k)\sup_{v\in V}\frac{\Vert v\Vert_{L^2(\Omega)}\Vert w\Vert_{L^2(\Omega)}}{\Vert v\Vert_V}\\
	%	&\lesssim\frac{1+k}{k}\Vert w\Vert_{L^2(\Omega)}\lesssim\Vert w\Vert_{L^2(\Omega)}.
	%\end{align*} 
Let $v_0:=\sum_{i=1}^N\tilde{{R}}_i^T (\mathcal{P}_{i,\ell}z_h|_{\partial\Omega_i})\in V_{h,0}$ be an approximation to $z_h$. 
Direct computation gives 
	\begin{align}
		\Vert z_h-v_0\Vert^2_V&=\left\Vert \sum_{i=1}^N\tilde{{R}}_i^T z_h^{i,\roma}+\sum_{i=1}^N\tilde{{R}}_i^T(z_h^{i,\romb}-\mathcal{P}_{i,\ell}z_h^{i,\romb}|_{\partial\Omega_i})\right\Vert_V^2\nonumber\\
&\leq 2\left\Vert\sum_{i=1}^N\tilde{{R}}_i^T z_h^{i,\roma}\right\Vert_{V}^2
+2\left\Vert\sum_{i=1}^N\tilde{{R}}_i^T(z_h^{i,\romb}-\mathcal{P}_{i,\ell}z_h^{i,\romb}|_{\partial\Omega_i})\right\Vert_{V}^2.\label{eq2_4}
\end{align}
Next we derive upper bounds for each term. Lemma \ref{lem:loc-coercive}, together with the Poincar\'{e} inequality and the identity \eqref{eq:loc-bubble-id}, leads to
\begin{align*}
\frac{1}{2\Ce} \|z_h^{i,\roma}\|_{V(\Omega_i)}^2&\leq \normL{w_h}{\Omega_i}\Cpoin{\Omega_i}^{1/2}d_i\normL{\nabla z_h^{i,\roma}}{\Omega_i}\\
&\leq \normL{w_h}{\Omega_i}\Cpoin{\Omega_i}^{1/2}d_i\|z_h^{i,\roma}\|_{V(\Omega_i)}.
\end{align*}
Consequently, we obtain
\begin{align}\label{eq:local-bubble-new}
\|z_h^{i,\roma}\|_{V(\Omega_i)}\leq 2\Ce\Cpoin{\Omega_i}^{1/2}d_i\normL{w_h}{\Omega_i}. 
\end{align}

%By \cite[Lemma 4.1]{MR4860976}, we have 
% \begin{align}\label{eq:loc-bubble}
%d_i\Vert \nabla z_h^{i,\roma}\Vert_{L^2(\Omega_i)}+\Cpoin{\Omega_i}^{-1/2}\Vert z_h^{i,\roma}\Vert_{L^2(\Omega_i)}
%\leq \Cpoin{\Omega_i}^{1/2}d_i^2\Vert w_h\Vert_{L^2(\Omega_i)}.
% \end{align}
%This, together with \eqref{eq:glo-err-pum2}, yields
This, combining with the fact that $k\bar{d}\lesssim 1$,  $\frac{\bar{d}}{\delta}\lesssim 1$, Lemma \ref{thm:pum-2} and \cite[Equation (6.9)]{MR4890903}
\begin{align*}%\label{eq:local-estimate} 
\left\Vert\sum_{i=1}^N\tilde{{R}}_i^T z_h^{i,\roma}\right\Vert_{V}^2\lesssim 
\sum_{i=1}^N\Vert\nabla z_h^{i,\roma}\Vert^2_{L^2(\Omega_i)},
\end{align*}
yields
\begin{align}\label{eq:local-estimate} 
\left\Vert\sum_{i=1}^N\tilde{{R}}_i^T z_h^{i,\roma}\right\Vert_{V}\lesssim \bar{d}
\Vert w_h\Vert_{L^2(\Omega)}.
\end{align}
%\begin{align}\label{eq:local-estimate} 
%\left\Vert\sum_{i=1}^N\tilde{{R}}_i^T z_h^{i,\roma}\right\Vert_{V}\lesssim 
%\delta^{-1} d_i^2 \Vert w_h\Vert_{L^2(\Omega)}.
%\end{align}
Meanwhile, Corollary \ref{thm:proj} and the definition of $z_h^{i,\romb}$ indicate 
\begin{equation}\label{eq:lastsss}
\begin{aligned}
\Vert \chi_i(z_h^{i,\romb}-\mathcal{P}_{i,\ell}z_h^{i,\romb}|_{\partial\Omega_i})\Vert_{V(\Omega_i)}
\lesssim  \eta_i d_{i}(\delta_i^{-1}+k)2^{-\ell/2}\| z_h^{i,\romb}\|_{H^1( \Omega_i)}.
\end{aligned}
\end{equation}
%\begin{equation}%\label{eq:lastsss}
%\begin{aligned}
%\normL{z_h^{i,\romb}-\mathcal{P}_{i,\ell}z_h^{i,\romb}|_{\partial\Omega_i}}{\Omega_i}
%&\leq \Cw \Ce2^{-\ell/2}d_i\| z_h^{i,\romb}\|_{H^1( \Omega_i)},\\
%\normL{\nabla(\chi_i(z_h^{i,\romb}-\mathcal{P}_{i,\ell}z_h^{i,\romb}|_{\partial\Omega_i}))}{\Omega_i}
%&\leq \Cw \Ce2^{-\ell/2}\delta^{-1}d_i\| z_h^{i,\romb}\|_{H^1( \Omega_i)}.
%\end{aligned}
%\end{equation}
Notice that combining \eqref{eq:local-bubble-new} with the identity $z_h|_{\Omega_i}=z_h^{i,\roma}+z_h^{i,\romb}$, and the triangle inequality, we obtain 
\begin{align*}%\label{eq:local-estimate2}
\left\Vert z_h^{i,\romb}\right\Vert_{H^1(\Omega_i)}
\leq \left\Vert z_h\right\Vert_{H^1(\Omega_i)}+\left\Vert z_h^{i,\roma}\right\Vert_{H^1(\Omega_i)}
\leq 
\left\Vert  z_h\right\Vert_{H^1(\Omega_i)}+
2\Ce \Cpoin{\Omega_i}^{1/2}d_i\Vert w_h\Vert_{L^2(\Omega_i)}.
\end{align*}
To derive the {\it a priori} estimate for $z_h$, Theorem \ref{thm_1}, Remark \ref{final:1111} and the condition \eqref{eq:fine-mesh-cond}, imply
	\begin{align*}
		\Vert z_h\Vert_V
		\leq\Const{stab}\Vert w_h\Vert_{L^2(\Omega)}.\nonumber%\label{eq2_3}
\end{align*}
Combining these two inequalities above, we derive {a priori} estimate for the local Harmonic part,
\begin{align}\label{eq:local-estimate2}
\left\Vert z_h^{i,\romb}\right\Vert_{H^1(\Omega_i)}
\leq (\Const{stab}+2\Ce \Cpoin{\Omega_i}^{1/2}d_i)\Vert w_h\Vert_{L^2(\Omega_i)}.
\end{align}
Hence, a combination of Lemma \ref{thm:pum-2}, \eqref{eq:lastsss} and \eqref{eq:local-estimate2} leads to
\begin{align*}%
\left\Vert\sum_{i=1}^N\tilde{{R}}_i^T
\left(z_h^{i,\romb}-\mathcal{P}_{i,\ell}z_h^{i,\romb}|_{\partial\Omega_i}\right)\right
\Vert_{V}^2
&\lesssim \max_{i=1,\cdots,N}\{\eta_i^2\}{\bar{d}}^2(\delta^{-1}+k)^22^{-\ell}\sum_{i=1}^N\| z_h^{i,\romb}\|_{H^1(\Omega_i)}^2\nonumber\\
&\lesssim \max_{i=1,\cdots,N}\{\eta_i^2\}\bar{d}^2(\delta^{-1}+k)^22^{-\ell}\Vert w_h\Vert_{L^2(\Omega)}^2.
%&\lesssim\left(C_{\infty}^2+k^{-2}\delta^{-2}C_{\op{G}}^2\right)(1+2\Ce{\rm C_{bdd}})^2\frac{H}{h}2^{-\ell}H^2\Vert w_{h}\Vert_{L^2(\Omega)}^2,%\label{eq2_10}
\end{align*}
%\begin{align*}
%&\left\Vert\sum_{i=1}^N\tilde{{R}}_i^T
%(z_h^{i,\romb}-\mathcal{P}_{i,\ell}z_h^{i,\romb})\right
%\Vert_{V}^2
%\lesssim\Ce^2 2^{-\ell}\left(k^2H^2+ (\delta^{-1}H)^2\right)\sum_{i=1}^N\| z_h^{i,\romb}\|_{H^1(\Omega_i)}^2\nonumber\\
%\lesssim&\Ce^2 2^{-\ell}\left(k^2H^2+(\delta^{-1}H)^2\right)\left(\| z_h\|_{H^1(\Omega)}^2+\Ce^2 d_i^2\Vert w_h\Vert_{L^2(\Omega)}^2\right)
%\lesssim\Ce^4 2^{-\ell}(\delta^{-1}H)^2\Vert w_{h}\Vert_{L^2(\Omega)}^2,%\label{eq2_10}
%\end{align*}
Finally, we obtain the desired assertion by combining with \eqref{eq2_4} and \eqref{eq:local-estimate}.
\end{proof}
\begin{comment}
\begin{proposition}\label{prop:wavelet-basedconv}
Assume that the coarse mesh size $H$ and the level parameter $\ell\in \mathbb{N}_{+}$ satisfy
\begin{align}\label{eq:wavelet-level}
H\leq k^{-1}\frac{1}{4\Const{bdd}\Const{\ref{thm:last}}\Ce}\quad \text{ and }\quad\ell\geq 2\log_2(\Const{bdd}\Const{\ref{thm:last}}\Ce^2k\Co)+4.
\end{align}
Then for any $v_h\in V_h$, there holds
\begin{equation}\label{eq:waveletErrconv}
\begin{aligned}
\|\mathcal{A}^{-1}v_h-\mathcal{C}_0 v_h\|_V
%&:=\min\limits_{w\in  V_{{\rm off}}^{{\rm EW}}}\seminormE{u-w}{D}\\
&\leq 4 \Const{bdd} \Const{\ref{thm:last}} \Ce H\|v_h\|_{L^2(\Omega)}.
\end{aligned}
\end{equation}
\end{proposition}
\end{comment}

\begin{remark}[$\ell\leq \log_2\frac{\bar{d}}{h}$]\label{rem:hH}
Note that the fine-scale local trace space $V_h(\Gamma_{i}^j)$ is a refinement of $V_{\ell}(\Gamma_{i}^j)$ for $i=1,\cdots,N$ and $j=1,\cdots,m_i$, then $V_{\ell}(\Gamma_{i}^j)=V_h(\Gamma_{i}^j)$ in the case that $\ell\leq \log\frac{\bar{d}}{h}$. Consequently, $\mathcal{P}_{i,\ell}v_{i}^{\partial}=v_{i}^{\partial}$ for all $v_{i}^{\partial}\in V_{h,i}$. Then we obtain the trival estimate of \eqref{eq:waveletErrconv1} and \eqref{eq:waveletErrconvDual} by replacing the term $2^{-\ell/2}$ with zero. 
\end{remark}

\begin{theorem}[Contractivity of the operator $I-B_{\op{hybrid}}A$]\label{prop:wavelet-basedconv}
Let %{\color{blue} \st the fine grid mesh size} $h$ and
the diameter $d_i$ of each subdomain $\Omega_i$ satisfy %\eqref{eq:fine-mesh-cond} \ednote{Shihua: to delete} , 
Assumption \ref{ass:resolution} and \eqref{eq:overlap-diameter}. Moreover, assume there exists a small constant $c_0>0$ suffciently small, such that the fine grid mesh size $h$, the maximum diameter of the subdomains $\bar{d}$ and the level parameter $\ell\in \mathbb{N}_{+}$ satisfy the following conditions %\ednote{Shihua: we need to give specific suggestions how to set $\alpha, \beta$ }
\begin{align}
 h &\lesssim k^{-3/2},\label{eq:fine-mesh-h} \\
\bar{d}&=c_0 k^{-1},\label{eq:wavelet-level-H}\\
\ell&= \min\left\{2\lceil\log_2 \bar{d}\rceil,\lfloor\log_2\frac{\bar{d}}{h}\rfloor\right\}.\label{eq:wavelet-level-ell}
\end{align}
Then, given any $0< \rho <1$, there exist $k_0>0$ and $C>0$ sufficiently large such that, for all $k>k_0$ and $h\ge C k^{-3}$, it holds
\begin{equation}\label{eq:waveletErrconv}
\|I-B_{\op{hybrid}}A\|\leq \rho < 1.
\end{equation}
\end{theorem}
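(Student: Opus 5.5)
We will use the factorization \eqref{eq:proj-edge-err}: for any $v_h\in V_h$, $(I-B_{\op{hybrid}}A)v_h=(I-R_0^TQ_0)v_h^{\partial}$. The proof then has two pieces. First we show that $v_h^{\partial}$ lies in (or is close to) the global Helmholtz harmonic space $V^{\partial}$ and is controlled in norm by Lemma \ref{lem:decomp}. Second we show, via a Schatz-type argument, that the coarse correction $I-R_0^TQ_0$ applied to such functions is small. Note that $Q_0$ is the $a$-Galerkin projection onto $V_{h,0}$. Indeed, $a(R_0^TQ_0 u,w_0)=a(u,w_0)$ for all $w_0\in V_{h,0}$, provided $A_0$ is invertible. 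Invertibility comes out of the same Schatz argument, since $V_{h,0}$ is finite dimensional and uniqueness implies existence.

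\textbf{Step 1 (structure of $v_h^{\partial}$).} Set $v_i:=v_h|_{\Omega_i}-Q_iv_h$. By the definition of $Q_i$, $a_i(Q_iv_h,w_i)=a(v_h,R_i^Tw_i)=a_i(v_h|_{\Omega_i},w_i)$ for $w_i\in V_{h,i}^0$, because $R_i^Tw_i$ is supported in $\Omega_i$ and vanishes on $\partial\Omega_i$. Hence $v_i\in V_i^{\partial}$ and $v_h^{\partial}=\sum_i\tilde R_i^Tv_i\in V^{\partial}$. Lemma \ref{lem:decomp} gives $(\sum_i\|v_i\|_{V(\Omega_i)}^2)^{1/2}\le C_\partial\sigma\|v_h\|_V$. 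Since $k\bar d=c_0$, we have $\|v_i\|_{H^1(\Omega_i)}\lesssim\|v_i\|_{V(\Omega_i)}$ for $k\ge1$. Lemma \ref{thm:last-app-harmonic} then yields
\[
\inf_{w_0\in V_{h,0}}\|v_h^{\partial}-w_0\|_V\le \Const{ap}\bar\eta\bar d(\delta^{-1}+k)2^{-\ell/2}C_\partial\sigma\|v_h\|_V .
\]

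\textbf{Step 2 (Schatz argument).} Let $e:=v_h^{\partial}-R_0^TQ_0v_h^{\partial}$. Galerkin orthogonality and boundedness \eqref{eq:sesqui-bound} give $|a(e,e)|\le\Const{bdd}\|e\|_V\inf_{w_0}\|v_h^\partial-w_0\|_V$. For the $L^2$ part we use duality: take $z_h$ solving the discrete adjoint problem with $w_h=e$. Then $\|e\|_{L^2}^2=a(e,z_h)=a(e,z_h-v_0)\le\Const{bdd}\|e\|_V\inf_{v_0}\|z_h-v_0\|_V$. Lemma \ref{thm:last} together with \eqref{eq:overlap-diameter} bounds this by $\Const{st}\bar\eta\bar d(\delta^{-1}+k)(\bar d+2^{-\ell/2})\|e\|_{L^2}$. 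Writing $\epsilon_{\rm adj}:=\Const{bdd}\Const{st}\bar\eta\bar d(\delta^{-1}+k)(\bar d+2^{-\ell/2})$, Gårding's inequality \eqref{eq:garding} gives
\[
\|e\|_V^2\le |a(e,e)|+2k^2\epsilon_{\rm adj}^2\|e\|_V^2 .
\]
If $2k^2\epsilon_{\rm adj}^2\le 1/2$, then $\|e\|_V\le 2\Const{bdd}\inf_{w_0}\|v_h^\partial-w_0\|_V$. Combined with Step 1, this gives $\|I-B_{\op{hybrid}}A\|\le 2\Const{bdd}\Const{ap}C_\partial\,\bar\eta\bar d(\delta^{-1}+k)2^{-\ell/2}\sigma$.

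\textbf{Step 3 (parameter bookkeeping; main obstacle).} It remains to check that, under \eqref{eq:fine-mesh-h}--\eqref{eq:wavelet-level-ell}, both $k\epsilon_{\rm adj}$ and the final product can be made $\le\rho$ for $k>k_0$. The easy factors are $\bar d(\delta^{-1}+k)\lesssim1$ by \eqref{eq:overlap-diameter} and $k\bar d=c_0$. The delicate ones are $\sigma=(k\bar d)^{-3/2}(kh)^{-1/2}=c_0^{-3/2}(kh)^{-1/2}$, which is large, and $\bar\eta=\max\{1,\bar d^{-1/2}c_0^{-3}h^{1/2}\}$. The latter is $\lesssim1$ since $h\lesssim k^{-3/2}$ and $\bar d\simeq k^{-1}$.

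The factor $2^{-\ell/2}$ has to beat both $\sigma$ and the $k$ in $k\epsilon_{\rm adj}$. In the branch $\ell=\lfloor\log_2(\bar d/h)\rfloor$, Remark \ref{rem:hH} replaces $2^{-\ell/2}$ by $0$. The coarse error then vanishes in Step 1 and only $k\bar d\cdot\bar d\,\bar\eta\bar d(\delta^{-1}+k)\lesssim c_0\bar d\to0$ remains in Step 2. In the other branch, $2^{-\ell/2}\simeq\bar d\simeq k^{-1}$ (reading $2\lceil\log_2\bar d\rceil$ in absolute value). This gives $k\epsilon_{\rm adj}\lesssim c_0(\bar d+k^{-1})\to0$. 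For the product we need $\sigma 2^{-\ell/2}\simeq k^{-1}(kh)^{-1/2}\to0$, which is where the lower bound $h\ge Ck^{-3}$ enters. I expect this bookkeeping, and getting the sign and branch of $\ell$ right so that the hidden constants close for $k>k_0$ and a given $\rho$, to be the main difficulty. I would first try taking $c_0$ small and then $k_0,C$ large.
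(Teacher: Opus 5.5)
Your proposal is correct and follows the paper's proof essentially step for step: the factorization \eqref{eq:proj-edge-err}, then Lemma \ref{lem:decomp} combined with Lemma \ref{thm:last-app-harmonic} (your explicit check that $v_h|_{\Omega_i}-Q_iv_h\in V_i^{\partial}$ is a step the paper uses tacitly), then the G\aa rding/Aubin--Nitsche (Schatz) argument with Lemma \ref{thm:last}, and finally the parameter bookkeeping. One correction to Step 3: $\bar\eta\bar d(\delta^{-1}+k)$ is merely bounded, not $O(\bar d)$, so you only get $k\epsilon_{\rm adj}\lesssim k(\bar d+2^{-\ell/2})\lesssim c_0$ rather than a quantity tending to $0$, and the final factor is $\lesssim c_0^{-1/2}(k^3h)^{-1/2}\le (c_0C)^{-1/2}$; hence the Schatz condition comes from taking $c_0$ small, and contraction from then taking $C$ large depending on $c_0$ and $\rho$, which is exactly the order the paper uses.
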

\begin{proof}
{Let $v_h\in V_h$ and let $v_h^{\partial}$ be defined in \eqref{eq:proj-edge-err}, then we derive $e_h:=(I-B_{\op{hybrid}}A) v_h = (I-R_0^TQ_0)v_h^{\partial}\in V_h$.} The
G\aa rding\rq{}s inequality in Theorem \ref{them:sesquilinear} implies
\begin{align*}
\|e_h\|_V^2&\leq \Re a(e_h,e_h)
+2k^2\normL{e_h}{\Omega}^2.
\end{align*}
{The representation above, together with the inclusion $V_{h,0}\subset V_h$ and Galerkin orthogonality, implies that $a(e_h, v_0)=0$ for all $v_0\in V_{h,0}$. Hence, we obtain}
\begin{align}\label{eq:222}
\|e_h\|_V^2
&\le \Re a(e_h,v_h^{\partial}-v_{0})
+2k^2\normL{e_h}{\Omega}^2, \quad\forall v_0\in V_{h,0}.
\end{align}
Next, we estimate $\normL{e_h}{\Omega}$ by the Aubin-Nitsche technique.%\ednote{Shihua: we may need to consider the modified duality argument} 
Let $z_h\in V_h$ be the solution to
\begin{align*}
a^*(z_h,v_h)=(e_h,v_h)_{\Omega}, \quad\forall v_h\in V_h. 
\end{align*}
For all $z_0\in V_{h,0}$, we obtain
\begin{align*}
\normL{e_h}{\Omega}^2
&=a^*(z_h,e_h)=\overline{a(e_h,z_h)}
=\overline{a(e_h,z_h-z_0)}.%\\
%&\leq \Const{bdd}\|u-u_{\text{ms},\ell}^{\text{EW}}\|_V\inf\limits_{z_{\text{ms},\ell}^{\text{EW}}\in V_{\text{ms},\ell}^{\text{EW}}}\|z-z_{\text{ms},\ell}^{\text{EW}}\|_V
\end{align*}
Furthermore, the estimates \eqref{eq:sesqui-bound} and \eqref{eq:waveletErrconvDual} lead to
\begin{align*}
\normL{e_h}{\Omega}^2
&\leq \Const{bdd}\|e_h\|_V\inf\limits_{z_0\in V_0}\|z_h-z_0\|_V\\
&\leq  \Const{bdd}\Const{st}\bar{\eta}\bar{d}(\delta^{-1}+k)\left(\bar{d}+2^{-\ell/2}\right) \|e_h\|_V \normL{e_h}{\Omega}.
\end{align*}
This implies 
\begin{align*}
\normL{e_h}{\Omega}
\leq  \Const{bdd}\Const{st}\bar{\eta}\bar{d}(\delta^{-1}+k)\left(\bar{d}+2^{-\ell/2}\right)\|e_h\|_V.
\end{align*}
Moreover, conditions \eqref{eq:overlap-diameter}, \eqref{eq:wavelet-level-H},  \eqref{eq:wavelet-level-ell} and Remark \ref{rem:hH} imply {$\bar{\eta}\bar{d}(\delta^{-1} +k) \lesssim 1$, and $\bar{d} +2^{-\ell/2} \le c_0k^{-1}$. Since $c_0>0$ is sufficiently small, then we obtain
\begin{align}\label{eq:schatz-condition}
\normL{e_h}{\Omega}\leq \tfrac{1}{2k}\|e_h\|_V,\quad \forall k\ge k_0.
\end{align}}
This and \eqref{eq:222} lead to
\begin{align*}
\|e_h\|_V^2\leq \Re a(e_h,v_h^{\partial}-v_0)+\tfrac{1}{2}\|e_h\|_V^2.
\end{align*}
Consequently, we obtain
$\|e_h\|_V^2\leq 2 \Re a(e_h,v_h^{\partial}-v_0)$.
Finally,  the boundedness of the sesquilinear form $a(\cdot,\cdot)$ in Theorem \ref{them:sesquilinear} implies
\begin{align*}
\|e_h\|_V\leq 2\Const{bdd} \inf_{v_0\in V_{h,0}}\|v_h^{\partial}-v_0\|_V.
\end{align*}
Then the approximation property \eqref{eq:waveletErrconv1} and the estimate \eqref{eq:vhpar} indicate
\begin{align*}
\|e_h\|_V\leq 2\Const{bdd}\Const{\rm ap}C_{\partial}\sigma \bar{\eta}\bar{d}({\delta}^{-1}+k)2^{-\ell/2} \|{u_h}\|_{V}.
\end{align*}
Combining with conditions \eqref{eq:overlap-diameter}, \eqref{eq:fine-mesh-h},
 \eqref{eq:wavelet-level-H} and \eqref{eq:wavelet-level-ell}, we obtain $\sigma\lesssim c_0^{-3/2} (kh)^{-1/2}$,  $\bar{\eta}\bar{d}(\delta^{-1} +k) \lesssim 1$, and $2^{-\ell/2} \le c_0k^{-1}$. Thus, we  arrive at
\begin{equation*}%\label{eq:waveletErrconv}
\|e_h\|_V
\lesssim \left(\tfrac{k^{ -3}}{h}\right)^{1/2}\|{v_h}\|_{V}.
\end{equation*}
This proves the desired assertion for $h \geq C k^{ -3}$ for a constant $C$ large enough.
\end{proof}
\begin{remark}[Well-posedness of coarse solver $A_0$]
Note that \eqref{eq:schatz-condition} guarantees well-posedness of the coarse problem $A_0$ \cite{MR373326}. 
\end{remark}

\section{Numerical tests}\label{sec:num}
%\textcolor{red}{$H\leq k^{-1-\alpha}$, $\alpha=0.1,0.2$, $\delta=H$}
We first verify two estimates used in Section \ref{sec:converge}. For each listed wavenumber $k$, we employ two settings of $(h,H)$. {\em Setting A} corresponds to 
$h \simeq k^{-3/2}$ and $ H=1/k$,  
where the notation $\simeq$ means that $h$ is chosen as close to $k^{-3/2}$ as possible.
{\em Setting B} takes a fixed number $m$ of grid-points per wavelength,
$h =2\pi/(mk)$ and $H=2/k$,
and we take $m=6\pi$, so that $h=1/(3k)$. The overlap is $\delta=h$ in Tests 1 and 2. Tests 3 and 4 examine convergence as the wavenumber and problem size increase at a fixed number of points per wavelength. Test 5 considers two- and three-dimensional media with discontinuous wave speed.

\subsection{Test 1: verification of Lemma \ref{lem:decomp}}
This test verifies Lemma \ref{lem:decomp}, or \eqref{eq:local-harmonic}. We compare
\[
\widehat{\sigma}:=\max_{i=1,\ldots,N}
\frac{\left\|Q_i v_h-v_h|_{\Omega_i}\right\|_{V(\Omega_i)}}
{\|v_h\|_{V(\widetilde{\Omega}_{i})}}
\]
with the scaling factor ${(hk)}^{-1/2}$ in \eqref{eq:local-harmonic}; the results are presented in Table \ref{ta:test1}.
The correlation coefficient between $\widehat{\sigma}$ and ${(hk)}^{-1/2}$ is 0.9999 in {\em Setting A}. In {\em Setting B}, both quantities remain constant over the tested wavenumbers. These observations are consistent with the dependence on $hk$ in Lemma \ref{lem:decomp}.
	
\begin{table}[hbt!]
		\centering 
        \begin{adjustbox}{max width=\textwidth}
        \begin{tabular}{ccccccccc}
			\toprule 
			\multicolumn{5}{c}{$h\simeq k^{-3/2}$, $H=1/k$}\tabularnewline
			\midrule 
			$k$& $h$ & $H$ & $ {(hk)}^{-\frac{1}{2}}$& $\widehat{\sigma}$\tabularnewline\hline
			10&$1/40$&1/10&  2.00&  1.40  \tabularnewline\hline % 8.0000
			20&$1/100$&1/20& 2.24  &  1.52  \tabularnewline\hline%8.9443
			40&$1/280$&1/40&2.65   &  1.72  \tabularnewline\hline%10.5830
			80&$1/720$&1/80& 3.00  &   1.90 \tabularnewline\hline%12.0000
			160&1/2080 &1/160&  3.61 &  2.19  \tabularnewline\hline%14.4222
			320&1/5760&1/320&  4.24 &  2.49  \tabularnewline  %%%16.9706 $ (1+kH)(1+(kH)^{-1})(hk)^{-1/2}$ 
	\bottomrule
        \end{tabular} \qquad  
        \begin{tabular}{ccccccccc}
        \toprule
			\multicolumn{5}{c}{$h= 1/(3k)$, $H=2/k$}\tabularnewline\midrule
			$k$& $h$ & $H$ &  $ {(hk)}^{-\frac{1}{2}}$& $\widehat{\sigma}$\tabularnewline\hline
			10&1/30& 1/5& 1.73 &1.25   \tabularnewline\hline
			20&1/60& 1/10& 1.73 & 1.25   \tabularnewline\hline
			40&1/120& 1/20&1.73 & 1.25  \tabularnewline\hline
			80&1/240& 1/40& 1.73 & 1.25   \tabularnewline\hline
			160&1/480& 1/80& 1.73 & 1.25  \tabularnewline\hline
			320&1/960&1/160 & 1.73 & 1.25 \tabularnewline	\bottomrule		%%  6.9282
		\end{tabular}
        \end{adjustbox}
		\caption{Comparison of computed value $\widehat{\sigma}$ with predicted value $ {(hk)}^{-\frac{1}{2}}$ for various $h$.}
		\label{ta:test1}
	\end{table}

\begin{table}[h!]
\centering
\begin{adjustbox}{max width=0.95\textwidth}
\begin{tabular}{ccccccccc}
\toprule 
\multicolumn{9}{c}{$h\simeq k^{-3/2}, H=1/k$}\tabularnewline
\midrule 
				\multirow{2}{*}{$k$}&  \multirow{2}{*}{$h$}&  \multirow{2}{*}{$H$}& \multicolumn{2}{c}{$\ell=0$} & \multicolumn{2}{c}{$\ell=1$} & \multicolumn{2}{c}{$\ell=2$}\tabularnewline
				\cmidrule{4-9} \cmidrule{5-9} \cmidrule{6-9} \cmidrule{7-9} \cmidrule{8-9} \cmidrule{9-9}
				&&	&$\Delta$& $2^{-\ell/2}\bar{d}$ & $\Delta$& $2^{-\ell/2}\bar{d}$ &$\Delta$& $2^{-\ell/2}\bar{d}$\tabularnewline
				\midrule 
				10&1/40& 1/10&  0.014488  &0.1000   & 0.012014 & 0.0707 &0.010737 & 0.0500\tabularnewline	\midrule 
				20&1/100&1/20 &  0.005940 &0.0500   &0.005096 & 0.0354 &0.004565 & 0.0250\tabularnewline\midrule
				40&1/280&1/40 &  0.002194  & 0.0250   & 0.001965 & 0.0177 &0.001720 & 0.0125\tabularnewline\midrule
				80&1/720& 1/80&  0.000871 & 0.0125  & 0.000801 & 0.0088 &0.000704 & 0.0063\tabularnewline\midrule
				160&1/2080&1/160 & 0.000309 & 0.0063   & 0.000293 & 0.0044 &0.000263 & 0.0031\tabularnewline\midrule
				320&1/5760&1/320 & 0.000114 &  0.0031   &  0.000110 & 0.0022& 0.000098 & 0.0016\tabularnewline
				\bottomrule
				
				\multicolumn{9}{c}{$h= 1/(3k)$, $H=2/k$}\tabularnewline
				\midrule 
				\multirow{2}{*}{$k$}&  \multirow{2}{*}{$h$}&  \multirow{2}{*}{$H$}& \multicolumn{2}{c}{$\ell=0$} & \multicolumn{2}{c}{$\ell=1$} & \multicolumn{2}{c}{$\ell=2$}\tabularnewline
				\cmidrule{4-9} \cmidrule{5-9} \cmidrule{6-9} \cmidrule{7-9} \cmidrule{8-9} \cmidrule{9-9}
				&&	&$\Delta$& $2^{-\ell/2}\bar{d}$ & $\Delta$& $2^{-\ell/2}\bar{d}$ &$\Delta$& $2^{-\ell/2}\bar{d}$\tabularnewline
				\midrule 
				10&1/30& 1/5 & 0.0219196 & 0.200000   & 0.0181578 & 0.141421  & 0.0147435 & 0.100000 \tabularnewline	\midrule 
				20&1/60& 1/10&  0.0109598 & 0.100000 & 0.0090789 & 0.070711  & 0.0073717 & 0.050000\tabularnewline\midrule
				40&1/120& 1/20& 0.0054799 & 0.050000   & 0.0045394 & 0.035355  & 0.0036859 & 0.025000\tabularnewline\midrule
				80&1/240& 1/40& 0.0027399 & 0.025000  & 0.0022697 & 0.017678  &0.0018429 & 0.012500\tabularnewline\midrule
				160&1/480&1/80 & 0.0013700 & 0.012500   & 0.0011349 & 0.008839  & 0.0009215 & 0.006250\tabularnewline\midrule
				320&1/960& 1/160& 0.0006850 & 0.006250   & 0.0005674 & 0.004419  & 0.0004607 & 0.003125\tabularnewline
				\bottomrule
				
			\end{tabular}
		\end{adjustbox}
		\caption{Comparison of computed value $\Delta$ with $2^{-\ell/2}\bar{d}$. }
			\label{ta:test2a}
	\end{table}
\subsection{Test 2: verification of Corollary \ref{thm:proj}}
Now we verify the estimates in Corollary \ref{thm:proj}. We only need to verify $\Delta:=\max_{i=1,\ldots,N}\frac{ \| e_i\|_{L^2(\Omega_i)} }{ \|  v_i^{\partial}\|_{H^1(\Omega_i)} }$, since the estimate on $\max_{i=1,\ldots,N}\frac{ \|\chi_i \nabla e_i\|_{L^2(\Omega_i)} }{ \|v_i^{\partial}\|_{H^1(\Omega_i)} }$ is derived from $\Delta$.
Table \ref{ta:test2a} compares $\Delta$ with the factor $2^{-\ell/2}\bar{d}$ in the bound of Corollary \ref{thm:proj}. The observed dependence of $\Delta$ on $\bar{d}$ is approximately linear in both {\em Setting A} and {\em Setting B}, consistent with this bound.
\subsection{Test 3: challenging fixed-\texorpdfstring{$n_{\rm ppw}$}{nppw} family in two dimensions}
To assess how trace enrichment affects convergence as the wavenumber increases, we consider \eqref{hel_cont} on $\Omega=(0,1)^2$ with constant $k$, homogeneous impedance data $g=0$, and the discrete point source
\[
F_h(v_h)=\overline{v_h}(1/2,1/2).
\]
We increase $k$ while refining the uniform Cartesian mesh so that
\[
n_{\rm ppw}:=\frac{2\pi}{kh}=20,
\qquad \frac{H}{h}=\frac{\delta}{h}=16,
\]
This keeps the number of mesh intervals per wavelength and per subdomain fixed, while increasing the number of subdomains. Comparing $\ell=0$ and $\ell=1$ therefore tests whether the corresponding coarse spaces maintain convergence as the global wave propagation problem grows. The discretization uses conforming bilinear elements with tensor-product Gauss quadrature and a normalized cubic partition of unity; the same choices, with trilinear elements, are used in the three-dimensional experiments below.

We compare one-level RAS-imp and EMs-HS using right-preconditioned BiCGStab, and include stationary EMs-HS to assess convergence without Krylov acceleration. For a common measure of convergence in this and the subsequent experiments, we report the number of preconditioner applications required from a zero initial guess to satisfy the directly evaluated relative residual criterion
\[
\frac{\|Au_h-F_h\|_2}{\|F_h\|_2}\leq 10^{-6}.
\]

Table \ref{ta:fixed-ppw-2d} shows that enriching the trace space changes the dependence of convergence on the wavenumber. Whereas the one-level counts grow with $k$, level-$1$ EMs-HS requires only five to seven applications with BiCGStab and six to nine without it. The level-$0$ space does not maintain this behavior: stationary iteration diverges on the three finer meshes, and even BiCGStab fails to converge within 200 applications on the finest mesh. In this family, Krylov acceleration improves convergence, but the richer trace space is needed to retain low application counts across the full wavenumber range.

\begin{table}[hbt!]
\centering
\begin{tabular}{ccccccc}
\toprule
 & & RAS-imp & \multicolumn{2}{c}{EMs-HS, $\ell=0$} & \multicolumn{2}{c}{EMs-HS, $\ell=1$}\\
\cmidrule(lr){4-5}\cmidrule(lr){6-7}
$1/h$ & $k$ & BiCGStab & Stationary & BiCGStab & Stationary & BiCGStab\\
\midrule
64  & 20.11  & 7  & 9 & 9 & 6 & 5\\
128 & 40.21  & 18 & div. & 16 & 9 & 7\\
256 & 80.42  & 42 & div. & 30 & 8 & 6\\
512 & 160.85 & 88 & div. & $>200$ & 8 & 6\\
\bottomrule
\end{tabular}
\caption{Preconditioner applications in two dimensions with $n_{\rm ppw}=20$ and $H/h=\delta/h=16$. Here \textup{div.} denotes divergence, and $>200$ denotes failure to reach the stopping criterion within 200 applications.}
\label{ta:fixed-ppw-2d}
\end{table}

\subsection{Test 4: challenging fixed-\texorpdfstring{$n_{\rm ppw}$}{nppw} family in three dimensions}
The three-dimensional experiment examines whether low application counts persist as the number of unknowns increases to more than $135$ million. We consider the homogeneous problem on $\Omega=(0,1)^3$, with $g=0$ and a discrete point source at the centre, and refine the mesh with
\[
n_{\rm ppw}=20,\qquad \frac{H}{h}=\frac{\delta}{h}=8,\qquad \ell=0.
\]
As in the two-dimensional family, this refinement increases both the wavenumber and the number of subdomains at fixed wavelength resolution. Table \ref{ta:fixed-ppw-3d} shows that BiCGStab requires four preconditioner applications at every mesh size, while stationary EMs-HS requires four or five. Thus convergence remains rapid throughout this three-dimensional family even without Krylov acceleration.

\begin{table}[hbt!]
\centering
\begin{tabular}{ccccc}
\toprule
$1/h$ & $\dim V_h$ & $k$ & Stationary & BiCGStab\\
\midrule
64  & 274,625     & 20.11 & 4 & 4\\
128 & 2,146,689   & 40.21 & 4 & 4\\
256 & 16,974,593  & 80.42 & 4 & 4\\
512 & 135,005,697 & 160.85 & 5 & 4\\
\bottomrule
\end{tabular}
\caption{Preconditioner applications in three dimensions with $n_{\rm ppw}=20$, $H/h=\delta/h=8$, and $\ell=0$.}
\label{ta:fixed-ppw-3d}
\end{table}

\subsection{Test 5: heterogeneous media}
We now examine EMs-HS for the three heterogeneous media shown in Figures~\ref{fig:hetero-A}--\ref{fig:scatter200}. Models A and B are two-dimensional: Model A contains high-speed inclusions with $c=1.4$ in a background with $c=1$, while Model B contains low-speed fractures with $c=1$ in a background with $c=1.4$. Model C is the three-dimensional medium with intersecting high-speed inclusions. The domains are the unit square for Models A and B and the unit cube for Model C. Their cellwise velocity distributions are represented on $1000^2$, $800^2$, and $200^3$ Cartesian meshes, respectively. We take angular frequencies $\omega=80$, $100$, and $29$, giving the local wavenumber ranges listed in Table~\ref{ta:heterogeneous} through $k(x)=\omega/c(x)$.

Each medium is excited by a localized Gaussian source
\[
f(x)=\exp\bigl(-|x-x_s|^2/h^2\bigr),
\]
with homogeneous impedance data. The source is centred at $x_s=(0.5,0.5)$ in two dimensions and at $x_s=(0.5,0.5,0.1)$ in three dimensions. In Models A and B its nodal interpolant defines the finite element load; in Model C it is sampled at cell centres and represented elementwise. Using $H/h=8$ and $\delta=H$ throughout, we compare $\ell=0$ and $\ell=1$ in two dimensions and retain $\ell=0$ in three dimensions.

Table~\ref{ta:heterogeneous} shows that level-$0$ EMs-HS converges in five BiCGStab preconditioner applications for all three media. Trace enrichment reduces this count to four in both two-dimensional models. The low application counts thus persist across these inclusion and fracture geometries, including the three-dimensional system with $8{,}120{,}601$ unknowns. The corresponding wave fields are displayed alongside the velocities in Figures~\ref{fig:hetero-A}--\ref{fig:scatter200}.

\begin{table}[htbp]
\centering
\begin{tabular}{ccccc}
\toprule
Model & $\dim V_h$ & Range of $k(x)$ & $\ell=0$ & $\ell=1$\\
\midrule
A & 1,002,001 & $57.14\sim 80$ & 5 & 4\\
B & 641,601 & $71.43\sim 100$ & 5 & 4\\
C & 8,120,601 & $20.71\sim 29$ & 5 & /\\
\bottomrule
\end{tabular}
\caption{BiCGStab preconditioner applications for heterogeneous media with $H/h=\delta/h=8$. The symbol `/' denotes a case not tested.}
\label{ta:heterogeneous}
\end{table}

\begin{figure}[htbp]
\centering
\includegraphics[width=0.8\textwidth]{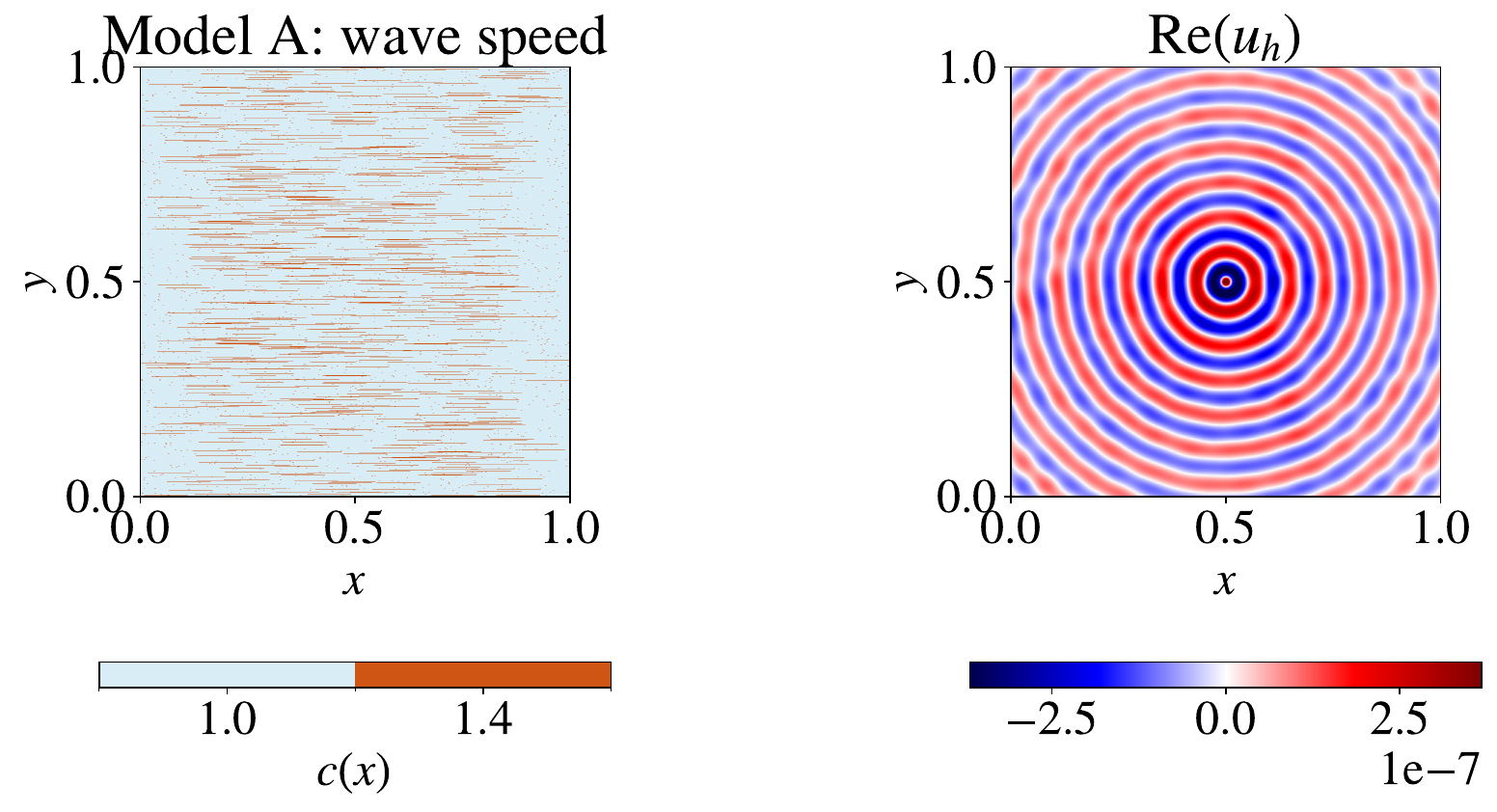}
\caption{Model A: wave speed and real part of the solution computed with $\ell=1$. The symmetric color limits are set by the $99.8$th percentile of $|\mathrm{Re}(u_h)|$; values beyond these limits saturate.}
\label{fig:hetero-A}
\end{figure}

\begin{figure}[htbp]
\centering
\includegraphics[width=0.8\textwidth]{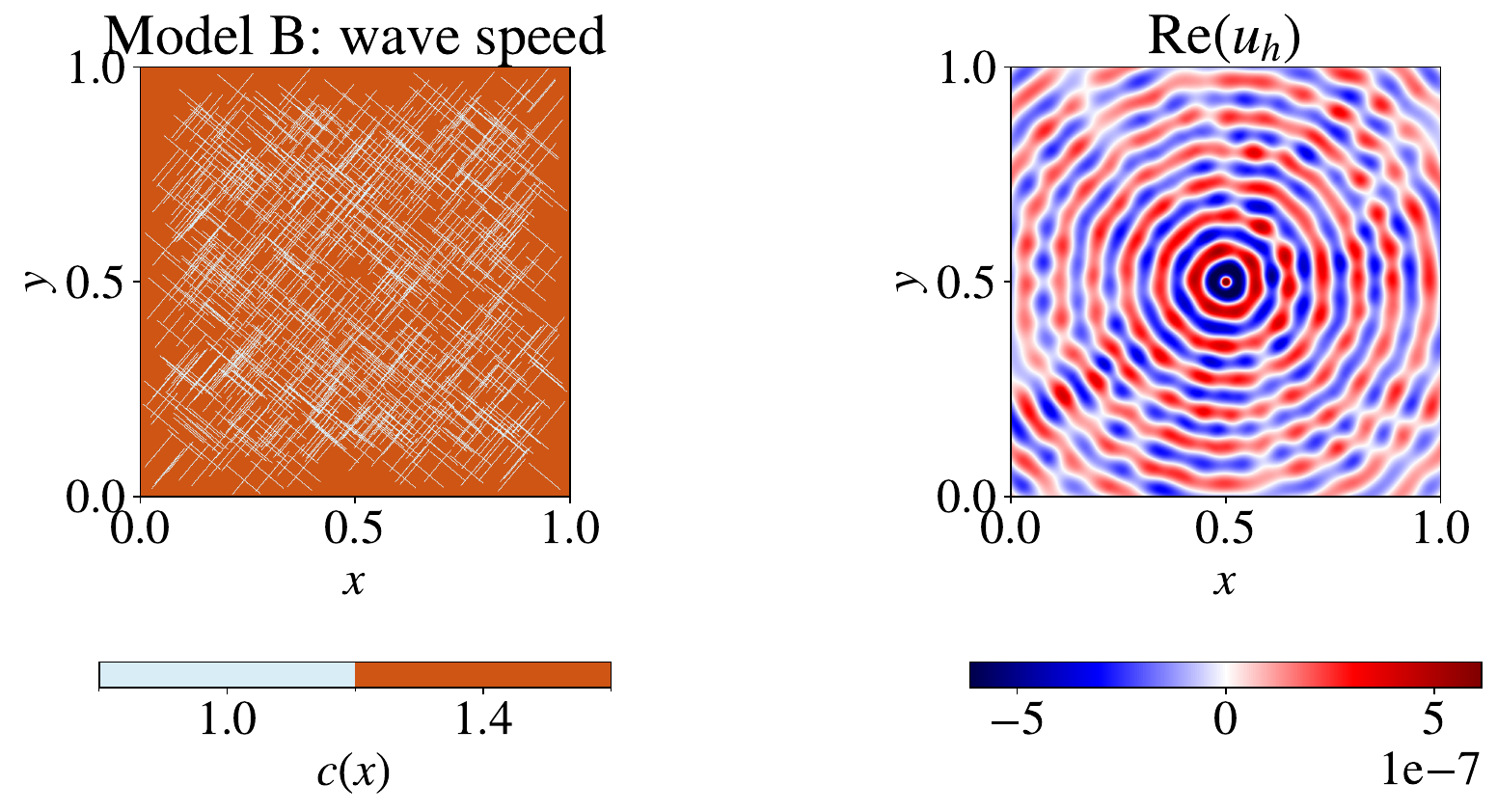}
\caption{Model B: wave speed and real part of the solution computed with $\ell=1$, using the same color-limit convention as Figure~\ref{fig:hetero-A}.}
\label{fig:hetero-B}
\end{figure}

\begin{figure}[htbp]
\centering
\includegraphics[width=0.8\textwidth]{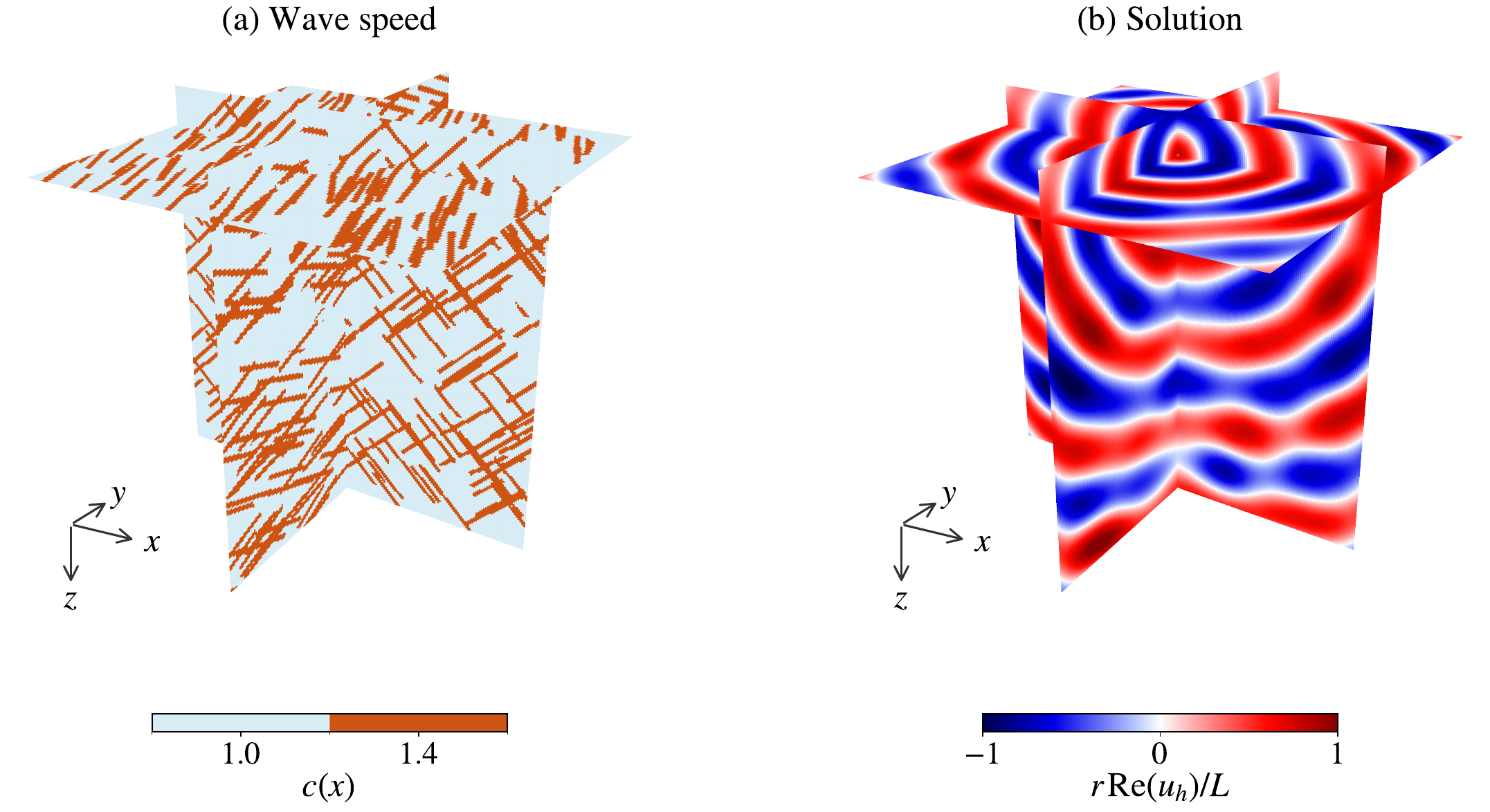}
\caption{Model C: (a) wave speed and (b) computed solution on $x=0.5$, $y=0.5$, and $z=0.1$, with $z$ displayed downward. To reveal oscillations away from the source, panel (b) displays $r\,\mathrm{Re}(u_h)/L$, where $r=|x-x_s|$ and $L$ is the $99.7$th percentile of $|r\,\mathrm{Re}(u_h)|$ on the plotted sections. Colors saturate outside $[-1,1]$.}
\label{fig:scatter200}
\end{figure}

 \section{Conclusion}\label{sec:conclusion}
 We have designed a novel hybrid Schwarz preconditioner for the Helmholtz problem with large wavenumbers based on the edge multiscale ansatz space, which allows for overlapping size of one coarse grid. We have rigorously analyzed its convergence, and demonstrated the performance of the method with extensive numerical tests in both 2-d and 3-d. There are several interesting questions deserving further investigations. First, we want to design scalable solvers for the coarse problem, of which the dimension is increasing with respect to the wavenumber $k$.  Second, we want to design a similar hybrid Schwarz preconditioner for the time-harmonic Maxwell equation with large wavenumber. 
\section*{Acknowledgement}
SG acknowledges the support from NSFC (No. 12201535), Guangdong Basic and Applied Basic Research Foundation (No. 2023A1515011651), and  Shenzhen Stability Science Program 2022. GL acknowledges the support from Young Scientists fund (No.: 12101520) by NSFC and General Research Fund (No.: 17308924) of Hong Kong RGC.

\bibliographystyle{plain}
\bibliography{reference}

\begin{thebibliography}{10}

\bibitem{MR2084239}
Martin Berggren.
\newblock Approximations of very weak solutions to boundary-value problems.
\newblock {\em SIAM J. Numer. Anal.}, 42(2):860--877, 2004.

\bibitem{MR4621835}
Niall Bootland, Victorita Dolean, Ivan~G. Graham, Chupeng Ma, and Robert Scheichl.
\newblock Overlapping {S}chwarz methods with {G}en{EO} coarse spaces for indefinite and nonself-adjoint problems.
\newblock {\em IMA J. Numer. Anal.}, 43(4):1899--1936, 2023.

\bibitem{MR4299039}
Niall Bootland, Victorita Dolean, Pierre Jolivet, and Pierre-Henri Tournier.
\newblock A comparison of coarse spaces for {H}elmholtz problems in the high frequency regime.
\newblock {\em Comput. Math. Appl.}, 98:239--253, 2021.

\bibitem{cai1992domain}
Xiao-Chuan Cai and Olof~B Widlund.
\newblock Domain decomposition algorithms for indefinite elliptic problems.
\newblock {\em SIAM J. Sci. Stat. Comput.}, 13(1):243--258, 1992.

\bibitem{MR520174}
Philippe~G. Ciarlet.
\newblock {\em The finite element method for elliptic problems}.
\newblock North-Holland Publishing Co., Amsterdam-New York-Oxford, 1978.

\bibitem{MR1162107}
Ingrid Daubechies.
\newblock {\em Ten Lectures on Wavelets}.
\newblock SIAM, Philadelphia, PA, 1992.

\bibitem{Graham-geneo-2024}
Victorita Dolean, Mark Fry, Ivan Graham, and Matthias Langer.
\newblock Schwarz preconditioner with $h_k$-{G}en{EO} coarse space for the indefinite {H}elmholtz problem.
\newblock {\em arXiv preprint arXiv:2406.06283}, 2024.

\bibitem{engquist2011sweepinga}
Bj{\"o}rn Engquist and Lexing Ying.
\newblock Sweeping preconditioner for the {H}elmholtz equation: hierarchical matrix representation.
\newblock {\em Commun. Pure Appl. Math.}, 64(5):697--735, 2011.

\bibitem{erlangga2006novel}
Yogi~A Erlangga, Cornelis~W Oosterlee, and Cornelis Vuik.
\newblock A novel multigrid based preconditioner for heterogeneous {H}elmholtz problems.
\newblock {\em SIAM J. Sci. Comput.}, 27(4):1471--1492, 2006.

\bibitem{erlangga2004class}
Yogi~A Erlangga, Cornelis Vuik, and Cornelis~Willebrordus Oosterlee.
\newblock On a class of preconditioners for solving the {H}elmholtz equation.
\newblock {\em Appl. Numer. Math.}, 50(3-4):409--425, 2004.

\bibitem{MR3980476}
Shubin Fu, Eric Chung, and Guanglian Li.
\newblock Edge multiscale methods for elliptic problems with heterogeneous coefficients.
\newblock {\em J. Comput. Phys.}, 396:228--242, 2019.

\bibitem{MR4860976}
Shubin Fu, Eric Chung, and Guanglian Li.
\newblock Wavelet-based edge multiscale finite element methods for singularly perturbed convection-diffusion equations.
\newblock {\em Multiscale Model. Simul.}, 23(1):431--457, 2025.

\bibitem{fu2021wavelet}
Shubin Fu, Guanglian Li, Richard Craster, and Sebastien Guenneau.
\newblock Wavelet-based edge multiscale finite element method for {H}elmholtz problems in perforated domains.
\newblock {\em Multiscale Model. Simul.}, 19(4):1684--1709, 2021.

\bibitem{MR5013301}
J.~Galkowski and E.~A. Spence.
\newblock Convergence {T}heory for {T}wo-{L}evel {H}ybrid {S}chwarz {P}reconditioners for {H}igh-{F}requency {H}elmholtz {P}roblems.
\newblock {\em SIAM J. Numer. Anal.}, 64(1):29--54, 2026.

\bibitem{galkowski2024convergence}
Jeffrey Galkowski, Shihua Gong, Ivan~G Graham, David Lafontaine, and Euan~A Spence.
\newblock Convergence of overlapping domain decomposition methods with {PML} transmission conditions applied to nontrapping {H}elmholtz problems.
\newblock {\em arXiv preprint arXiv:2404.02156}, 2024.

\bibitem{MR2718268}
Juan Galvis and Yalchin Efendiev.
\newblock Domain decomposition preconditioners for multiscale flows in high-contrast media.
\newblock {\em Multiscale Model. Simul.}, 8(4):1461--1483, 2010.

\bibitem{MR2728702}
Juan Galvis and Yalchin Efendiev.
\newblock Domain decomposition preconditioners for multiscale flows in high contrast media: reduced dimension coarse spaces.
\newblock {\em Multiscale Model. Simul.}, 8(5):1621--1644, 2010.

\bibitem{gong2022convergence}
Shihua Gong, Martin~J Gander, Ivan~G Graham, David Lafontaine, and Euan~A Spence.
\newblock Convergence of parallel overlapping domain decomposition methods for the {H}elmholtz equation.
\newblock {\em Numer. Math.}, 152(2):259--306, 2022.

\bibitem{gong2023convergence}
Shihua Gong, Ivan Graham, and Euan Spence.
\newblock Convergence of restricted additive {S}chwarz with impedance transmission conditions for discretised {H}elmholtz problems.
\newblock {\em Math. Comput.}, 92(339):175--215, 2023.

\bibitem{gong2021domain}
Shihua Gong, Ivan~G Graham, and Euan~A Spence.
\newblock Domain decomposition preconditioners for high-order discretizations of the heterogeneous {H}elmholtz equation.
\newblock {\em IMA J. Numer. Anal.}, 41(3):2139--2185, 2021.

\bibitem{graham2017domain}
Ivan Graham, Euan Spence, and Eero Vainikko.
\newblock Domain decomposition preconditioning for high-frequency {H}elmholtz problems with absorption.
\newblock {\em Math. Comput.}, 86(307):2089--2127, 2017.

\bibitem{graham2020domain}
Ivan~G Graham, Euan~A Spence, and Jun Zou.
\newblock Domain decomposition with local impedance conditions for the {H}elmholtz equation with absorption.
\newblock {\em SIAM J. Numer. Anal.}, 58(5):2515--2543, 2020.

\bibitem{MR775683}
P.~Grisvard.
\newblock {\em Elliptic problems in nonsmooth domains}, volume~24 of {\em Monographs and Studies in Mathematics}.
\newblock Pitman (Advanced Publishing Program), Boston, MA, 1985.

\bibitem{MR4890903}
Qiya Hu and Ziyi Li.
\newblock A hybrid two-level weighted {S}chwarz method for {H}elmholtz equations.
\newblock {\em SIAM J. Numer. Anal.}, 63(2):716--743, 2025.

\bibitem{ihlenburg1995dispersion}
Frank Ihlenburg and Ivo Babu{\v{s}}ka.
\newblock Dispersion analysis and error estimation of {G}alerkin finite element methods for the {H}elmholtz equation.
\newblock {\em Int. J. Numer. Methods Eng.}, 38(22):3745--3774, 1995.

\bibitem{ihlenburg1995finite}
Frank Ihlenburg and Ivo Babu{\v{s}}ka.
\newblock Finite element solution of the {H}elmholtz equation with high wave number part {I}: The h-version of the {FEM}.
\newblock {\em Comput. Math. Appl.}, 30(9):9--37, 1995.

\bibitem{kimn2007restricted}
Jung-Han Kimn and Marcus Sarkis.
\newblock Restricted overlapping balancing domain decomposition methods and restricted coarse problems for the {H}elmholtz problem.
\newblock {\em Comput. Methods Appl. Mech. Eng.}, 196(8):1507--1514, 2007.

\bibitem{MR3834684}
Ralf Kornhuber, Daniel Peterseim, and Harry Yserentant.
\newblock An analysis of a class of variational multiscale methods based on subspace decomposition.
\newblock {\em Math. Comput.}, 87(314):2765--2774, 2018.

\bibitem{MR3939320}
Guanglian Li.
\newblock On the convergence rates of {GM}s{FEM}s for heterogeneous elliptic problems without oversampling techniques.
\newblock {\em Multiscale Model. Simul.}, 17(2):593--619, 2019.

\bibitem{MR0350177}
J.-L. Lions and E.~Magenes.
\newblock {\em Non-homogeneous {B}oundary {V}alue {P}roblems and {A}pplications. {V}ol. {I}}.
\newblock Springer-Verlag, New York-Heidelberg, 1972.

\bibitem{MR4980418}
Peipei Lu, Xuejun Xu, Bowen Zheng, and Jun Zou.
\newblock Two-{L}evel {H}ybrid {S}chwarz {P}reconditioners for the {H}elmholtz {E}quation with {H}igh {W}ave {N}umber.
\newblock {\em SIAM J. Numer. Anal.}, 63(6):2187--2220, 2025.

\bibitem{MR4601686}
Chupeng Ma, Christian Alber, and Robert Scheichl.
\newblock Wavenumber explicit convergence of a multiscale generalized finite element method for heterogeneous {H}elmholtz problems.
\newblock {\em SIAM J. Numer. Anal.}, 61(3):1546--1584, 2023.

\bibitem{MR4992072}
Chupeng Ma, Christian Alber, Robert Scheichl, and Yongwei Zhang.
\newblock Two-{L}evel {R}estricted {A}dditive {S}chwarz {P}reconditioner {B}ased on {M}ultiscale {S}pectral {G}eneralized {FEM} for {H}eterogeneous {H}elmholtz {P}roblems.
\newblock {\em J. Sci. Comput.}, 105(3):Paper No. 99, 2025.

\bibitem{MR1742312}
William McLean.
\newblock {\em Strongly elliptic systems and boundary integral equations}.
\newblock Cambridge University Press, Cambridge, 2000.

\bibitem{melenk2010convergence}
J~Melenk and Stefan Sauter.
\newblock Convergence analysis for finite element discretizations of the {H}elmholtz equation with {D}irichlet-to-{N}eumann boundary conditions.
\newblock {\em Math. Comput.}, 79(272):1871--1914, 2010.

\bibitem{MR2812565}
J.~M. Melenk and S.~Sauter.
\newblock Wavenumber explicit convergence analysis for {G}alerkin discretizations of the {H}elmholtz equation.
\newblock {\em SIAM J. Numer. Anal.}, 49(3):1210--1243, 2011.

\bibitem{melenk1995generalized}
Jens~Markus Melenk.
\newblock {\em On generalized finite element methods}.
\newblock PhD thesis, research directed by Dept. of Mathematics.University of Maryland at College Park, 1995.

\bibitem{MR3614010}
Daniel Peterseim.
\newblock Eliminating the pollution effect in {H}elmholtz problems by local subscale correction.
\newblock {\em Math. Comput.}, 86(305):1005--1036, 2017.

\bibitem{sauter2006refined}
Stefan~A Sauter.
\newblock A refined finite element convergence theory for highly indefinite {H}elmholtz problems.
\newblock {\em Computing}, 78:101--115, 2006.

\bibitem{MR373326}
Alfred~H. Schatz.
\newblock An observation concerning {R}itz-{G}alerkin methods with indefinite bilinear forms.
\newblock {\em Math. Comp.}, 28:959--962, 1974.

\bibitem{toselli2006domain}
Andrea Toselli and Olof Widlund.
\newblock {\em Domain decomposition methods-algorithms and theory}, volume~34.
\newblock Springer Science \& Business Media, 2006.

\bibitem{MR3232452}
Haijun Wu.
\newblock Pre-asymptotic error analysis of {CIP}-{FEM} and {FEM} for the {H}elmholtz equation with high wave number. {P}art {I}: linear version.
\newblock {\em IMA J. Numer. Anal.}, 34(3):1266--1288, 2014.

\bibitem{xu1992new}
Jinchao Xu.
\newblock A new class of iterative methods for nonself adjoint or indefinite problems.
\newblock {\em SIAM J. Numer. Anal.}, 29(2):303--319, 1992.

\end{thebibliography}

%USE THE BELOW OPTIONS IN CASE YOU NEED AUTHOR YEAR FORMAT.
%\bibliographystyle{abbrvnat}
%\bibliography{reference}

\end{document}